\newtheorem{thm}{Theorem}
\newtheorem{prop}[thm]{Proposition}
\newtheorem{lemma}[thm]{Lemma}
\theoremstyle{definition}
\newtheorem{defn}{Definition}
\theoremstyle{remark}
\newtheorem{remark}{Remark}
\numberwithin{equation}{section}
\def\C{\mathbb{C}}
\def\R{\mathbb{R}}
\begin{document}
\title[]{A conformally invariant variational problem for time-like curves}

\author{Olimjon Eshkobilov}
\address{(O. Eshkobilov) Dipartimento di Matematica, Università di Torino,
Via Carlo Alberto, 10 - I-10123 Torino, Italy}
\email{olimjon.eshkobilov@edu.unito.it}

\author{Emilio Musso}
\address{(E. Musso) Dipartimento di Scienze Matematiche, Politecnico di Torino,
Corso Duca degli Abruz\-zi 24, I-10129 Torino, Italy}
\email{emilio.musso@polito.it}


\thanks{Authors partially supported by
PRIN 2010-2011 ``Variet\`a reali e complesse: geometria, to\-po\-lo\-gia e analisi ar\-mo\-ni\-ca'' and by
GNSAGA of INDAM.}

\subjclass[2000]{53C50, 53A30}

\date{Version of \today}


\keywords{Conformal Lorenztian geometry, timelike curves, conformal invariants, Einstein universe, conformal Lorentzian compactification, conformal strain}

\begin{abstract}
We study the conformally invariant variational problem for time-like curves in the $n$-dimensional Einstein universe defined by the conformal strain functional. We prove that the stationary curves are trapped into an Einsetin universe of dimension $2$, $3$ or $4$. We study the linearly-full stationary curves in a four-dimensional Einstein universe and we show that they can be integrated by quadratures in terms of elliptic functions, elliptic integrals and Jacobi's theta functions.
\end{abstract}

\maketitle

\section{Introduction}\label{s:intro}
Lorentzian conformal geometry began in $1918$ with the seminal work of H.Weyl \cite{W}. In the 1980s, it has been considerably developed in connection with the twistor approach to gravity by Penrose and Rindler \cite{PR} and it is widely used in the cyclic cosmological models in general relativity \cite{Pn1,Pn3, Tod}. It is also related to regularization of the Kepler problem \cite{GS} and Lie sphere geometry \cite{Blaschke, Ce, JMN}. The purpose of the present paper is to investigate a variational problem for time-like curves in the conformal compactification $\mathcal{E}^{1,n-1}$ of the Minkowski n-space. In the literature, $\mathcal{E}^{1,n-1}$ is often referred to as the {\it compact $n$-dimensional Einstein universe} \cite{BCDG,Fr,GS}. The choice of working within the compact model is motivated by the fact that, in this case, the conformal group is a group of matrices. This greatly simplifies the computational aspects. However, our considerations apply as well in the non-compact case. It 'should be noted that any Lorentzian space-form or any Friedmann--Lemaitre--Robertson--Walker cosmological model can be realized via a conformal embedding, as an open domain of $\mathcal{E}^{1,n-1}$ \cite{HE}.

\noindent Lorentzian conformal geometry shares, at least on a formal level, many common features, with Riemannian conformal geometry, a classical topic in differential geometry. The Lorentzian conformal geometry of the Einstein universe can be developed in analogy with the conformal geometry of a sphere (M\"obius geometry). For instance, the existence of a canonical conformal invariant arc-element along a generic curve in $S^n$ is a well known fact that goes back to the works \cite{F, Ha, L, T, V} of Fialkov, Haantjes, Liebmann, Takasu and Vessiot, all published in the first decades of the past century. In \cite{DMN} the construction of the conformal arc-element has been extended to time-like curves in a $3$-dimensional Einstein universe. In this paper we show how to define a conformal invariant arc-element for a time-like curve in an Einstein universe of arbitrary dimension. The integral of the conformal arc-element defines the simplest conformal invariant variational problem on the space of generic time-like curves of $\mathcal{E}^{1,n-1}$, called the conformal strain functional. This is the Lorentzian counterpart of the conformal arc-length functional in M\"obius geometry \cite{LO2010, LS, MMR, M1, M2, MN} and generalizes the homonymous functional for time-like curves in $\mathcal{E}^{1,2}$, previously considered in \cite{DMN}. Proceeding in analogy with \cite{MMR} and using the method of moving frames we deduce the variational equations satisfied by the stationary curves, referred to as conformal world-lines. We prove that conformal world-lines in $\mathcal{E}^{1,n-1}$ are trapped into a totally geodesic Einstein universe of dimension $2$, $3$ or $4$. This is the Lorentzian analogue of a similar result in M\"obius geometry \cite{MMR}. The $2$-dimensional case is rather trivial since the trajectories are orbits of $1$-parameter groups of conformal transformations. The $3$-dimensional case is more challenging and it has been partially investigated in \cite{DMN}. Here, we focus on the world-lines of $\mathcal{E}^{1,3}$ which are not trapped in a lower dimensional totally umbilical Einstein universe (linearly-full world lines). We prove that their trajectories can be explicitly integrated by quadratures in terms of elliptic functions and elliptic integrals.

The paper is organized as follows. In Section \ref{s1}, we collect from the literature few basic facts about conformal Lorentzian geometry \cite{BCDG,GS} and we reformulate in the Lorentzian context the classical approach to the conformal geometry of curves in the M\"obius space \cite{F,Ha,Mon,SS,S,T,Thomsen}. We define the conformal strain and the conformal line-element, which is the Lorentzian analogue of the conformal arc element of a curve in $S^n$ \cite{LO2010, MMR, Mon, M1, M2, MN}. In Section \ref{s2}, we use the moving frame method  to compute the Euler-Lagrange equations of the conformal strain functional (Theorem \ref{thmA}). Consequently we show that the
trajectory of a conformal world-line lies in a totally umbilical Einstein universe of dimension $2$, $3$ or $4$ (Theorem \ref{ThmB}). In Section \ref{s3} we study linearly full conformal world-lines of a four-dimensional Einstein universe. Given such a world-line we build a canonical lifting to the conformal group, the canonical conformal frame and we define the three conformal curvatures (Proposition \ref{propuniq}). Then we use the variational equation to show that the curvatures are either constants or else can be expressed in terms of Jabobi's elliptic functions  (Proposition \ref{ConfCurv}). In the first case the trajectory is an orbit of a $1$-parameter group of conformal transformations. Leaving aside the constant curvature case, we prove that the conformal equivalence classes of world lines can be parameterized by three real parameters (Proposition \ref{PHRS}). Then we define the momentum operator, that is an element of the Lie algebra of the conformal group, intrinsically defined by the world-line. The existence of the momentum is a consequence of the conformal invariance of the functional and of the N\"other conservation theorem (Remark \ref{ThEx}). We prove that the momentum operator is either a regular or an exceptional element of the conformal Lie algebra (Proposition \ref{CT}). Then we define the integrating factors of an eigenvalue of the momentum and we build the principal vectors of the eigenvalues (Proposition \ref{PV}). The explicit computation of the integrating factors is rather technical and is considered in the Appendix. In Section $5$ we use the integrating factors and the principal vectors
to integrate by quadratures the trajectory of a linearly full conformal world-line with non-constant curvatures (Theorem \ref{ThmC} and Theorem \ref{ThmD}). At the end of the section we briefly comment the theoretical aspects underlying the integration by quadratures, we explain why a linearly full world-line with non-constant curvatures, can't be closed and we show that the trajectory of a world-line is invariant by the action of
an infinite cyclic group of conformal transformations.

\section{Conformal geometry of a time-like curve}\label{s1}

\subsection{The Einstein universe and its restricted conformal group} We denote by
$\mathcal{E}^{1,n-1}$ the n-dimensional sub-manifold of $\R^{n+2}, n>1$, defined by the equations $x_0^2+x_1^2=1$ and $x_2^2+\dots + x_{n+1}^2=1$. As a manifold, $\mathcal{E}^{1,n-1}$ is the Cartesian product $S^1 \times S^{n-1}$. The restriction of the quadratic form
$$g=-dx_0^2-dx_1^2+dx_2^2+\dots + dx_{n+1}^2$$
induces a Lorentzian pseudo-metric $g_{\textsl{e}}$ on $\mathcal{E}^{1,n-1}$. The normal bundle of $\mathcal{E}^{1,n-1}$ is spanned by the restrictions of the vector fields $\mathbf{n}_1=x_0\partial_{x_0}+x_1\partial_{x_1}$ and $\mathbf{n}_2=x_2\partial_{x_2}+\dots +x_{n+1}\partial_{x_{n+1}}$. Thus, contracting $dx_0\wedge\dots dx_{n+1}$ with $\mathbf{n}_1$ and $\mathbf{n}_2$, we get a volume form on $\mathcal{E}^{1,n-1}$ which in turn defines an orientation. The vector field $-x_1\partial_{x_0}+x_0\partial_{x_1}$ is tangent to $\mathcal{E}^{1,n-1}$ and induces a unit time-like vector field  on $\mathcal{E}^{1,n-1}$.
We time-orient $\mathcal{E}^{1,n-1}$ by requiring that such a vector field is future-oriented.

\begin{defn}
The Lorentzian manifold $(\mathcal{E}^{1,n-1},g_{\textsl{e}})$, with the above specified orientation
and time-orientation, is called the {\it $n$-dimensional Einstein universe}.
\end{defn}

\noindent In order to describe the conformal geometry of $\mathcal{E}^{1,n-1}$ it is convenient to consider in $\R^{n+2}$ the coordinates
$$y_0=\frac{1}{\sqrt{2}}(x_0+x_{n+1}),\quad y_1=x_1, \dots , y_n=x_n,\quad y_{n+1}=\frac{1}{\sqrt{2}}(-x_0+x_{n+1}).$$
The corresponding basis of $\R^{n+2}$ is denoted by $(E_0,\dots, E_{n+1})$ and is said the {\it standard (light-cone) basis} of $\R^{n+2}$. With respect to $(y_0,\dots , y_{n+1})$ the scalar product associated to $g$ can be written as
\begin{equation}\label{sp}\langle Y,Y'\rangle=-(y_0y'_{n+1}+y_{n+1}y'_0)-y_1y'_1+\sum_{j=2}^{n}y_jy'_j.\end{equation}
In addition, $dV=dy_0\wedge \dots \wedge y_{n+1}$ is a positive volume form. From now on we will use the light-cone coordinates and the elements $V\in \R^{n+2}$ are thought of as column vectors constructed from the components of $V$ with respect to $(E_0,\dots ,E_{n+1})$. For each $V\in \R^{n+2}$, $V\neq 0$, we denote by $[V]$ the oriented line spanned by $V$. The map
$ V\in \mathcal{E}^{1,n-1}\to [V]$ allow us to identify  $\mathcal{E}^{1,n-1}$ and the manifold of the isotropic oriented lines through the origin of $\R^{n+2}$ (null rays). Using such an identification, the connected component of the identity $\mathrm{A}^{\uparrow}_+(2,n)$ of the pseudo-orthogonal group of (\ref{sp}) acts transitively and effectively on the left of $\mathcal{E}^{1,n-1}$ by $\mathbf{X}[V]=[\mathbf{X}\cdot V]$. The action preserves the oriented, time-oriented conformal Lorentzian structure of $\mathcal{E}^{1,n-1}$.
It is a classical result that, if $n>2$, every restricted conformal transformation of $\mathcal{E}^{1,n-1}$ is induced by a unique element of $\mathrm{A}^{\uparrow}_+(2,n)$ \cite{DNF,Fr}. For this reason, we call $\mathrm{A}^{\uparrow}_+(2,n)$ the {\it restricted conformal group of the $n$-dimensional Einstein universe}.

\begin{remark}{To distinguish the connected component of the identity we proceed as in \cite{GS} : we consider the cone $\mathcal{C}\subset \bigwedge^2(\R^{n+2})$ of all isotropic bi-vectors, ie the non-zero decomposable elements $V\wedge W$ of $\bigwedge^2(\R^{n+2})$ such that $\langle V,V\rangle = \langle W,W\rangle = \langle V,W\rangle = 0$. The function
$$\mathfrak{V}: V\wedge W \in \mathcal{C} \to dV(V,W,E_1,\dots E_n,E_{n+1}-E_0)\in \R$$
never vanishes and the half cones
$$\mathcal{C}_+=\{V\wedge W\in \mathcal{C} : \mathfrak{V}(V\wedge W)>0\},\quad  \mathcal{C}_-=\{V\wedge W\in \mathcal{} : \mathfrak{V}(V\wedge W)<0\}$$
are the two connected components of $\mathcal{C}$. Then, $\mathrm{A}^{\uparrow}_+(2,n)$ is the group of all pseudo-orthogonal matrices $\mathbf{B}$ of the scalar product (\ref{sp}) such that $\mathrm{det}(\mathbf{B})=1$ and $\mathbf{B}\cdot \mathcal{C}_+=\mathcal{C}_+$.}\end{remark}

\noindent We put
\begin{equation}\label{mm}\mathtt{m}=(\mathtt{m}_{ji}),\quad \mathtt{m}_{ji}=\langle \mathrm{E}_j,\mathrm{E}_i\rangle,\quad i,j=0,\dots, n+1.\end{equation}
Then the column vectors $B_0,\dots B_{n+1}$ of a matrix $\mathbf{B}\in \mathrm{A}^{\uparrow}_+(2,n)$ constitute a {\it light-cone basis} of $\R^{n+2}$, ie a positive-oriented basis such that
$$\langle B_i,B_j\rangle = \mathtt{m}_{ji}, \quad i,j=0,\dots n+1,\quad B_0\wedge(B_1+B_2)\in \mathcal{C}_+.$$
This allow us to identify $\mathrm{A}^{\uparrow}_+(2,n)$ and the manifold of all light-cone basis of
$\R^{n+2}$. We use the notation $\R^{2,n}$ to denote $\R^{n+2}$ equipped with the scalar product $(\ref{sp})$, the volume form $dV$ and the positive half cone $\mathcal{C}_+$. Differentiating the $\R^{2,n}$-valued maps
$$\mathcal{B}_j:B \in \mathrm{A}^{\uparrow}_+(2,n)\to B_j\in \R^{2,n},\quad j=0,\dots,n+1,$$
yields
$$d\mathcal{B}_j=\sum_{i=0}^{n+1}\mu_j^i \mathcal{B}_i,$$
where $\mu^i_j$ are left-invariant 1-forms. The conditions $\langle B_j,B_i\rangle= \mathtt{m}_{ji}$ imply that $\mu =(\mu^i_j)$ takes values in the Lie algebra
$$\mathfrak{a}(2,n)=\{X\in \mathfrak{gl}(n+2,\R) / ^tX\cdot \mathtt m + \mathtt m\cdot X=0\}.$$
of $\mathrm{A}^{\uparrow}_+(2,n)$. Consequently we can write
$$\mu = \left(
          \begin{array}{cccc}
            \mu^0_0 & -\mu^1_{n+1} & \mu_{n+1} & 0 \\
            \mu^1_0 & 0 & ^t\mu_1 & \mu^1_{n+1} \\
            \mu_0 & \mu_1 & \widetilde{\mu} & \mu_{n+1} \\
            0 & -\mu^1_0 & ^t\mu_0 & -\mu^0_0 \\
          \end{array}
        \right),$$
where
$$\mu_0=^t(\mu^2_0,\dots, \mu^n_0),\quad \mu_1=^t(\mu^2_1,\dots, \mu^n_1),\quad
\mu_{n+1}=^t(\mu^2_{n+1},\dots, \mu^n_{n+1})$$
and $^t\widetilde{\mu}+\widetilde{\mu}=0$. The left-invariant 1-forms $\mu^0_0$, $\mu^j_0$, $\mu^j_{n+1}$, $j=1,\dots, n$ and $\mu^i_j$, $1\le i<j=1,\dots n$ are linearly independent and span the dual of the Lie algebra $\mathfrak{a}(2,n)$. They satisfy the {\it Maurer-Cartan equations}
\begin{equation}\label{MC}d\mu^i_j=-\sum_{k=0}^{n+1}\mu^i_k\wedge \mu^k_j,\quad  i,j=0,\dots, n+1.\end{equation}

\begin{remark}
Let $\mathbb{M}^{1,n-1}$ be {\em Minkowski n-space}, i.e., the affine space $\R^n$ with the Lorentzian scalar product
$$(\mathbf{p},\mathbf{q})=-p_1q_1+p_2q_2+\dots + p_nq_n.$$
The map
\begin{equation}\label{j}
\mathbf{j}(\mathbf{p})= [^t(1,p_1,\dots,p_n,\frac{1}{2}(\mathbf{p},\mathbf{p}))]\in \mathcal{E}^{1,n-1}
\end{equation}
is a conformal embedding whose image is said the {\it Minkowski-chamber} of $\mathcal{E}^{1,n-1}$
Let $\mathrm{P}^{\uparrow}_+(1,n-1)=\mathbb{M}^{1,n-1}\rtimes \mathrm{SO}^{\uparrow}_+(1,n-1)$
be the restricted Poincar\'e group of $\mathbb{M}^{1,n-1}$. For each $(\mathbf{p},\mathrm{L})\in \mathrm{P}^{\uparrow}_+(1,n-1)$ we put $^{*}\mathbf{p}=(-p_1,p_2,\dots, p_n)$. The matrix
$$\mathbf{B}(\mathbf{p},\mathrm{L})=\left(
                      \begin{array}{ccc}
                        1 & 0 & 0 \\
                        \mathbf{p} & \mathrm{L} & 0 \\
                        \frac{1}{2}^{*}\mathbf{p}\cdot \mathbf{p} & ^{*}\mathbf{p}\cdot \mathrm{L} & 1 \\
                      \end{array}
                    \right)
$$
belongs to $\mathrm{A}^{\uparrow}_+(2,n)$ and
\begin{equation}\label{J}\mathbf{J}:(\mathbf{p},\mathrm{L})\in \mathrm{P}^{\uparrow}_+(1,n-1)\to \mathbf{B}(\mathbf{p},\mathrm{L}))\in \mathrm{A}^{\uparrow}_+(2,n)\end{equation}
is a faithful representation. In a similar way, one can build conformal embeddings of the de-Sitter or anti de-Sitter $n$-spaces into $\mathcal{E}^{1,n-1}$. Also the Robertson-Walker $n$-spaces can be conformally mapped as open sub-manifolds of the Einstein universe \cite{HE}.\end{remark}

\begin{defn}{A $(2+h)$-dimensional vector subspace $\mathbb{W}\subset \R^{2,n}$, $h=1,...,n-1$ is said {\it Lorentzian} if the restriction of $\langle -, -\rangle $ to $\mathbb{W}$ is non-degenerate, of signature $(2,h)$. The set $\mathcal{E}(\mathbb{W})$ of all null rays belonging to $\mathbb{W}$ is a $h$-dimensional totally umbilical Lorentzian sub-manifold of $\mathcal{E}^{1,n-1}$. We call $\mathcal{E}(\mathbb{W})$ a {\it $h$-dimensional Lorentzian cycle} of $\mathcal{E}^{1,n-1}$. If we equip $\mathcal{E}(\mathbb{W})$ with the induced conformal structure we get a Lorentzian conformal space equivalent to a $h$-dimensional Einstein universe.}\end{defn}

\subsection{Time-like curves} Let $\gamma : {I}\subset \R\to \mathcal{E}^{1,n-1}$ be a parameterized time-like curve, $n\ge 3$. A {\it null-lift} is a map $\Gamma :I\to \R^{2,n}$ such that $\gamma(t)=[\Gamma(t)]$, for every $t\in I$. Since $\gamma$ is a time-like immersion, its null-lifts satisfy
$$\Gamma\wedge \Gamma'|_t\neq 0,\quad \langle \Gamma|_t,\Gamma|_t=\langle \Gamma|_t,\Gamma'|_t\rangle =0,\quad
\langle \Gamma'|_t,\Gamma'|_t\rangle < 0,$$
for every $t\in I$. For each $k=1,...,n+1$, the {\it $k$-th osculating space} of $\gamma$ at $\gamma(t)$ is the linear subspace $\mathcal{T}^k(\gamma)|_t\subset \R^{2,n}$ spanned by the vectors $\Gamma|_t,\Gamma'|_t,\dots ,\Gamma^k|_t$. Obviously the definition does not depend on the choice of the null lift. Putting together all the osculating spaces we get the k-th osculating sheaf
$$\mathcal{T}^{k}(\gamma)=\{(t,V)\in I\times \R^{2,n} : V\in \mathcal{T}^k(\gamma)|_t\}.$$

\begin{lemma}{$\mathcal{T}^2(\gamma)|_t$ is a $3$-dimensional Lorentzian subspace of $\R^{2,n}$, for every $t\in I$.}\end{lemma}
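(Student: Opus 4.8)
The plan is to read off the entire statement from the Gram matrix of the triple $(\Gamma,\Gamma',\Gamma'')$ for a fixed null-lift $\Gamma$ of $\gamma$ (any choice will do, since the osculating space is lift-independent). The decisive preliminary step is to record one more inner product beyond those given: differentiating the constraint $\langle\Gamma,\Gamma'\rangle=0$ yields
\[
0=\frac{d}{dt}\langle\Gamma,\Gamma'\rangle=\langle\Gamma',\Gamma'\rangle+\langle\Gamma,\Gamma''\rangle,
\]
so that $\langle\Gamma,\Gamma''\rangle=-\langle\Gamma',\Gamma'\rangle$, which is \emph{strictly positive} because $\Gamma'$ is time-like. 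This single identity is the crux of the whole argument, and I expect it to be the main obstacle in the sense that everything below hinges on $\langle\Gamma,\Gamma''\rangle\neq 0$: without it the null-direction $\Gamma$ would make the restricted form degenerate.

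Next I would assemble the Gram matrix of $(\Gamma,\Gamma',\Gamma'')$ with respect to $\langle-,-\rangle$. Writing $c=\langle\Gamma,\Gamma''\rangle>0$, $d=\langle\Gamma',\Gamma'\rangle<0$, $e=\langle\Gamma',\Gamma''\rangle$ and $f=\langle\Gamma'',\Gamma''\rangle$, and using $\langle\Gamma,\Gamma\rangle=\langle\Gamma,\Gamma'\rangle=0$, the matrix takes the form
\[
G=\begin{pmatrix} 0 & 0 & c\\ 0 & d & e\\ c & e & f\end{pmatrix},
\qquad
\det G=-c^{2}d>0 .
\]
Since $\det G\neq 0$, the three vectors $\Gamma,\Gamma',\Gamma''$ are linearly independent, so $\dim\mathcal{T}^{2}(\gamma)|_{t}=3$; and the non-vanishing of $\det G$ is exactly the statement that the restriction of $\langle-,-\rangle$ to this $3$-space is non-degenerate. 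Note that neither $e$ nor $f$ enters the determinant, so no hypothesis on higher-order behaviour of $\gamma$ is needed.

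It remains to identify the signature. The product of the three eigenvalues of $G$ equals $\det G>0$, which leaves only two possibilities: all three eigenvalues positive, or exactly one positive and two negative. The first case is excluded because $\mathcal{T}^{2}(\gamma)|_{t}$ already contains the time-like vector $\Gamma'$, so the restricted form cannot be positive-definite. Hence the signature is $(2,1)$ in the convention of the preceding definitions, i.e.\ two negative directions and one positive direction, and $\mathcal{T}^{2}(\gamma)|_{t}$ is a $3$-dimensional Lorentzian subspace of $\R^{2,n}$, as claimed. I would close by remarking that the computation is manifestly independent of the chosen null-lift, since rescaling $\Gamma\mapsto\lambda\Gamma$ by a positive function induces an upper-triangular change of basis of $(\Gamma,\Gamma',\Gamma'')$ with diagonal entries $\lambda$, leaving the spanned subspace—and therefore its induced metric type—unchanged.
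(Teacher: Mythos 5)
Your proof is correct and follows essentially the same route as the paper: both arguments turn on differentiating $\langle\Gamma,\Gamma'\rangle=0$ to obtain $\langle\Gamma,\Gamma''\rangle=-\langle\Gamma',\Gamma'\rangle\neq 0$, and then read off non-degeneracy and the signature $(2,1)$ of the restricted form. The only (cosmetic) difference is that you extract the signature from the sign of the Gram determinant together with the presence of the time-like vector $\Gamma'$, whereas the paper normalizes $\langle\Gamma',\Gamma'\rangle=-1$ and exhibits an explicit orthogonal basis $A_1,A_2,A_3$ with norms $-1,-1,+1$.
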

\begin{proof}{Choose $\Gamma$ so that $\langle \Gamma',\Gamma'\rangle = -1$. Differentiating $\langle \Gamma,\Gamma'\rangle = 0$ and $\langle \Gamma',\Gamma'\rangle = -1$
we get $\langle \Gamma,\Gamma''\rangle = 1$ and $\langle \Gamma',\Gamma''\rangle = 0$. This implies that $\Gamma|_t,\Gamma'|_t$ and $\Gamma''|_t$ are linearly independent, for each $t\in I$. We put
$$A_1= \frac{1}{2}(1+\langle \Gamma'',\Gamma''\rangle)\Gamma - \Gamma'',\quad A_2=\Gamma' \quad A_3=\frac{1}{2}(1-\langle \Gamma'',\Gamma''\rangle)\Gamma + \Gamma''.$$
Then, $(A_1|_t,A_2|_t,A_3|_t)$ is a basis of $\mathcal{T}^2(\gamma)|_t$ such that
\[
\begin{split}
 \langle A_i|_t,A_j|_t\rangle &= 0,\quad i\neq j,\\
\langle A_1|_t,A_1|_t\rangle &= \langle A_2|_t,A_2|_t\rangle=-\langle A_3|_t,A_3|_t\rangle = -1.
   \end{split}
   \]
This yields the result.}\end{proof}

\noindent The Lemma implies that $\mathcal{T}^k(\gamma)|_t$ is a Lorentzian subspace of $\R^{2,n}$, for every $k\ge 2$ and every $t\in I$. The Lorentzian cycle $\mathcal{E}(\mathcal{T}^k(\gamma)|_t)$ is denoted by $\mathcal{E}^k(\gamma)|_t$. We call $\mathcal{E}^k(\gamma)|_t$ the {\it $k$-th osculating cycle} of $\gamma$ at $\gamma(t)$. The orthogonal complement $\mathcal{N}^k(\gamma)|_t$ of $\mathcal{T}^k(\gamma)|_t$ is a space-like vector subspace such that $\R^{2,n}=\mathcal{T}^k(\gamma)|_t \oplus \mathcal{N}^k(\gamma)|_t$.
We call $\mathcal{N}^k(\gamma)|_t$ the {\it $k$-th normal space} at $\gamma(t)$ and we define the {\it k-th normal sheaf} by
$$\mathcal{N}^{k}(\gamma)=\{(t,V)\in I\times \R^{2,n} : V\in \mathcal{N}^k(\gamma)|_t\}.$$
The orthogonal projection of $\R^{n+2}$ onto $\mathcal{N}^k(\gamma)|_t$ is denoted by $pr_{(k)}|_t$.

\begin{defn}{The {\it conformal strain} of a time-like curve is the quartic differential
\begin{equation}\label{strain-density}\mathcal{Q}_{\gamma}=
\frac{\langle pr_{(2)}(\Gamma'''),
pr_{(2)}(\Gamma''')\rangle}{|\langle \Gamma',\Gamma'\rangle|}dt^4
\end{equation}}\end{defn}

\noindent The conformal strain is independent on the choice of the null lift and, in addition, if $\gamma$ and $\widetilde{\gamma}$ are two {\it equivalent time-like curves}\footnote{ie, $\widetilde{\gamma}=B\cdot (\gamma\circ h)$, where $B\in \mathrm{A}^{\uparrow}_+(1,n+1)$ and $h$ is a change of parameter.} then $\mathcal{Q}_{\widetilde{\gamma}}=h^*(\mathcal{Q}_{\gamma})$.

\begin{defn}{If $\mathcal{Q}(\gamma)|_t=0$, the point $\gamma(t)$ is said to be a {\it conformal vertex}. A time-like curve without conformal vertices is said {\it generic}. If $\mathcal{Q}_{\gamma}=0$, the curve is said {\it totally degenerate}.}\end{defn}

\begin{remark}{If $\gamma$ is totally degenerate then $\mathcal{E}^1(\gamma)|_t$ is constant and the trajectory of $\gamma$ is contained in a 1-dimensional conformal cycle.}\end{remark}

\noindent It can be shown \cite{DMN} that $\mathcal{E}^1(\gamma)|_t$ has second order analytic contact with $\gamma$ at $\gamma(t)$. Moreover, $\gamma(t)$ is a conformal vertex if and only if the order of contact is strictly bigger than 2. This highlights the fact that the conformal strain measures the infinitesimal distorsion of the curve from its osculating cycle.

\begin{defn}{If $\gamma$ is non-degenerate then there exist a unique null lift, referred to as the {\it canonical lift} such that $\mathcal{Q}_{\gamma}= \langle \Gamma'',\Gamma\rangle^2dt^4$. The smooth positive function $\upsilon_{\gamma}=|\langle \Gamma',\Gamma'\rangle|^{1/2}$ is the {\it conformal strain density} and the exterior differential $1$-form $\sigma_{\gamma}=\upsilon_{\gamma}dt$ is called the {\it conformal arc element} of $\gamma$. By construction, $\sigma_{\gamma}$  is invariant under the action of the restricted conformal group and orientation-preserving changes of parameter. In particular, each generic time-like curve can be parameterized is such a way that $\upsilon_{\gamma}=1$.  In this case, we say that the curve is {\it parameterized by conformal parameter}, which is usually denoted by $u$. Given a smooth map $f:I\to \R^h$, we define the {\it derivative of $f$ with respect to the conformal arc element} by $\dot{f}=\upsilon_{\gamma}^{-1}f'$.}\end{defn}

\begin{remark}
{The $1$-form $\sigma_{\gamma}$ is the Lorentzian analogue of the conformal arc element of a curve in the 3-dimensional round sphere \cite{LO2010,MMR,M1,MN} and generalizes the analogue notion for generic time-like curves in the $3$-dimensional Einstein universe \cite{DMN}.}
\end{remark}

\noindent Let $\gamma$ be non-degenerate and $\Gamma$ be its canonical lift. Then, $\mathrm{dim}(\mathcal{T}^3(\gamma)|_t)=4$, for every $t$. Hence $\mathcal{T}^3(\gamma)$ is a vector bundle. Furthermore, the cross sections
\begin{equation}\label{cvf1}
M_0=\Gamma,\quad M_1=\frac{1}{|\langle \dot{\Gamma},\dot{\Gamma}\rangle|^{1/2}}\dot{\Gamma},\quad
M_2=\frac{pr_{(2)}{\dddot{(\Gamma)}}}{\langle pr_{(2)}(\dddot{\Gamma}),pr_{(2)}(\dddot{\Gamma})\rangle)^{1/2}},
\end{equation}
and
\begin{equation}\label{cvf2}
M_{n+1}=-\frac{1}{\langle \Gamma,\ddot{\Gamma}\rangle}\ddot{\Gamma}+\frac{\langle \dot{\Gamma},\ddot{\Gamma}\rangle}{\langle \Gamma,\ddot{\Gamma}\rangle}\dot{\Gamma}+\frac{1}{2}(
\frac{\langle \ddot{\Gamma},\ddot{\Gamma}\rangle}{\langle \Gamma,\ddot{\Gamma}\rangle^2}
- \frac{\langle \ddot{\Gamma},\dot{\Gamma}\rangle^2}{\langle \Gamma,\ddot{\Gamma}\rangle^2})\Gamma
\end{equation}
give rise to a {\it canonical trivialization} $(M_0,M_1,M_2,M_{n+1})$ of $\mathcal{T}^3(\gamma)$. The canonical trivialization satisfies
\begin{equation}\label{CT}
\begin{split}
& \langle M_0,M_0\rangle = \langle M_{n+1},M_{n+1}\rangle = \langle M_0,M_1\rangle = \langle M_0,M_2\rangle=0,\\
& \langle M_1,M_{n+1}\rangle=\langle M_2,M_{n+1}\rangle=\langle M_1,M_2\rangle=0,\\
&\langle M_0,M_{n+1}\rangle = \langle M_1,M_1\rangle = -\langle M_2,M_2\rangle=-1.
\end{split}
\end{equation}
In particular, $M_0|_t\wedge (M_1|_t+M_2|_t)$ is an isotropic bi-vector, for every $t\in I$. If we revert the orientation along the curve, this bi-vector change sign. Thus, each non-degenerate curve possesses an {\it intrinsic orientation} such that $M_0|_t\wedge (M_1|_t+M_2|_t)\in \mathcal{C}_{+}$, for every $t\in I$.
\medskip

\noindent From now on we implicitly assume that the time-like curves are non-degenerate and equipped with their intrinsic orientations. In addition, if $\gamma$ is such a curve we use the notation $\Gamma$ to denote its canonical null lifting.

\begin{defn}{The scalar product (\ref{sp}) induces a metric structure on the vector bundle $\mathcal{N}^3(\gamma)$. The {\it covariant derivative} of a cross section $V:I\to \mathcal{N}^3(\gamma)$ with respect to the conformal line-element is defined by
\begin{equation}\label{D}
D(V)=pr_{(3)}(\dot{V}):I\to \mathcal{N}^3(\gamma).\end{equation}}\end{defn}

\noindent The normal bundles $\mathcal{N}^2(\gamma)$ and $\mathcal{N}^3(\gamma)$ possess two {\it canonical cross sections}, denoted by $W_{\gamma}$ and $S_{\gamma}$ respectively. The cross section $S_{\gamma}$ is defined by
\begin{equation}\label{SS1} S_{\gamma}=pr_{(3)}(\dot{M}_2).\end{equation}
We set
\begin{equation}\label{cinv1}h_1=\langle \dot{M}_1,M_{n+1}\rangle, \quad h_2=\sqrt{\langle S_{\gamma},S_{\gamma}\rangle}.
\end{equation}
and we define $W_{\gamma}$ by
\begin{equation}\label{VD}W_{\gamma}=(\dot{h_1}-3h_2\dot{h_2})M_2-(h_2^2-2h_1)S_{\gamma}+D^2(S_{\gamma}).\end{equation}
Note that
\[\begin{split}
\mathcal{T}^4(\gamma)|_t&=\mathrm{span}(M_0|_t,M_1|_t,M_2|_t,S_{\gamma}|_t,B_{n+1}|_t),\\
\mathcal{T}^5(\gamma)|_t&=\mathrm{span}(M_0|_t,M_1|_t,M_2|_t,S_{\gamma}|_t,D(S_{\gamma})|_t,B_{n+1}|_t),
\end{split}
\]
for every $t\in I$.

\subsection{First and second-order frames}
A {\it first-order frame} along $\gamma$ is a smooth map
$$A=(A_0,\dots A_{n+1}):I\to \mathrm{A}^{\uparrow}_+(2,n)$$
such that $A_0$ is a null lift and $A'_0\in \mathrm{Span}(A_0,A_1)$. First-order frame fields do exist along any time-like curve. If $A$ is a first-order frame, then any other is given by
$$\widetilde{A}=A\cdot X(r,x,y,R),$$
where
$$r,x : I\to \R, x>0,\quad y:I\to \R^{n-1},\quad R:I\to \mathrm{SO}(n-1)$$
are smooth functions and
\begin{equation}\label{gauge}X(r,x,y,R)=\left(
               \begin{array}{cccc}
                 r &-x & ^ty\cdot R & \frac{^ty\cdot y-x^2}{2} \\
                 0 & 1 & 0 & x/r \\
                 0 & 0 & R & y/r \\
                 0 & 0 & 0 & r^{-1} \\
               \end{array}
             \right).\end{equation}

\begin{defn}{A first-order frame is said of the {\it second-order} if
$$A_0=M_0,\quad A_1=M_1,\quad A_2=M_2,\quad A_{n+1}=M_{n+1}.$$
Second-order frames do exist along any generic, time-like curve with its intrinsic orientation. Note that $(A_3,\dots A_n)$ is a trivialization of the third-order normal bundle of $\gamma$.}\end{defn}

\noindent If $A$ is a second order frame field, then
\begin{equation}\label{LS}A' = A\cdot \left(
              \begin{array}{ccccc}
                0 & -h_1& 1 & 0 & 0 \\
                1 & 0 & 0 & 0 & h_1 \\
                0 & 0 & 0 & -^ts & 1 \\
                0 & 0 & s & \phi & 0 \\
                0 & -1 & 0 & 0 & 0 \\
              \end{array}
            \right)\upsilon_{\gamma},\end{equation}
where $\phi =(\phi^j_i)_{i,j=3,\dots,n} :I\to \mathfrak{o}(n-2)$ is a smooth map, $h_1$ is as in (\ref{cinv1}) and $s=^t(s^3,\dots,s^n)$ is defined by
$$S_{\gamma}=\sum_{j=3}^{n}s^jA_j.$$
If $V=\sum_{j=3}^{n}V^jA_j$ is a cross-section of $\mathcal{N}^3(\gamma)$, then
$$DV=\sum_{j=3}^{n}(\dot{V}^j+\sum_{i=3}^{n}\phi^j_i V^i)A_j.$$
We then have
\begin{equation}\label{D12S}DS_{\gamma}= \sum_{j=3}^n s^j_{(1)}A_j,\quad D^2(S_{\gamma})= \sum_{j=3}^n s^j_{(2)}A_j,\end{equation}
and
\begin{equation}\label{VDL}
W_{\gamma}=(\dot{h}_1-3^ts\cdot \dot{s})A_2+\sum_{j=3}^{n}(\dot{s}^j_{(2)}+(2h_1-^ts\cdot s)s^j)A_j,
\end{equation}
where $s_{(1)},s_{(2)}:I\to \R^{n-3}$ are given by
$$s^j_{(1)} = \dot{s}^j+\sum_{i=3}^{n}\phi^j_i s^i,\quad
s^j_{(2)} = \dot{s}^j_{(1)}+\sum_{i=3}^{n}\phi^j_i s^i_{(1)}.$$

\subsection{The strain functional and conformal world-lines}
\noindent We have seen that we may construct, for a generic time-like curve, a canonical line-element $\sigma_{\gamma}$ on $I$. If $K\subset I$ is a closed interval in $I$, we can define the {\it total strain functional}
$$\mathcal{S}_K(\gamma)=\int_K \sigma_{\gamma}$$
on the space of smooth, generic, time-like immersions of $I$ into $\mathcal{E}^{1,n-1}$.
\begin{defn}{We say that $\gamma$ is a {\it conformal world-line} if for any closed interval $K\subset I$ and any smooth variation
$$\mathbf{g}:(t,\tau)\in I\times (-\epsilon,\epsilon)\to \gamma_{\tau}(t)\in \mathcal{E}^{1,n-1},$$
with $\gamma_0=\gamma$ and\footnote{$\mathrm{supp}(\mathbf{g})$ denotes the support of the variation, ie the closure of the set of all $t\in I$ such that $\gamma_{\tau}(t)\neq \gamma(t)$ for some $\tau\in (-\epsilon,\epsilon)$.} $\mathrm{supp}(\mathbf{g})
\subset K$, we have $$\frac{d}{d\tau}\left( \mathcal{S}_K(\gamma_\tau)\right)|_t=0.$$}\end{defn}

\section{The variational equations}\label{s2}
\noindent The purpose of this section is to prove the following

\begin{thm}\label{thmA}{A generic time-like curve equipped with its intrinsic orientation is a conformal world-line if and only if $W_{\gamma}$ vanishes identically.
}\end{thm}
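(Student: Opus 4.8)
The plan is to compute the first variation of the total strain functional
$\mathcal{S}_K(\gamma)=\int_K \upsilon_\gamma\,dt$ using the method of moving frames, so
that the Euler--Lagrange equation emerges as a condition on the structure forms of a
second-order frame. First I would fix a variation $\mathbf{g}$ with
$\mathrm{supp}(\mathbf{g})\subset K$ and, for each $\tau$, choose a second-order frame
field $A(\cdot,\tau):I\to \mathrm{A}^{\uparrow}_+(2,n)$ along $\gamma_\tau$, depending
smoothly on $\tau$. The pullback of the Maurer--Cartan form then splits as
$A^{-1}dA = \mu_{(t)}\,dt + \mu_{(\tau)}\,d\tau$, where for fixed $\tau$ the $dt$-part is
exactly the matrix in $(\ref{LS})$ multiplied by $\upsilon_{\gamma_\tau}$. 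Writing the
$d\tau$-component as an $\mathfrak{a}(2,n)$-valued function $\xi=(\xi^i_j)$ (the
infinitesimal variation vector, whose entries encode the transverse velocity of the
frame), the compatibility condition $d(A^{-1}dA)+ (A^{-1}dA)\wedge(A^{-1}dA)=0$ supplies
the key \emph{structure equations} relating $\partial_\tau$ of the entries of $(\ref{LS})$
to the $t$-derivatives of the entries of $\xi$. The crucial scalar among these is
$\partial_\tau \upsilon_\gamma$, i.e.\ the variation of the strain density.

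The central step is to extract $\partial_\tau\upsilon_\gamma$ from the relevant
Maurer--Cartan equation $(\ref{MC})$. The density $\upsilon_\gamma$ is the coefficient of
$dt$ in the off-diagonal entries that survive in the second-order normalization (the
$\mu^1_0$ and $\mu^2_0$ slots of $(\ref{LS})$, normalized to value $1$). Since we may
reparameterize so that $\upsilon_{\gamma}=1$ at $\tau=0$, differentiating the appropriate
structure equation in $\tau$ and setting $\tau=0$ expresses
$\partial_\tau\upsilon_\gamma|_{\tau=0}$ as a first-order differential operator applied to
the components of $\xi$, with coefficients built from $h_1$, $s$, and $\phi$. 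I would then
substitute this into $\frac{d}{d\tau}\mathcal{S}_K(\gamma_\tau)|_{\tau=0}=\int_K
\partial_\tau\upsilon_\gamma|_{\tau=0}\,dt$ and integrate by parts, using that $\xi$ and
all its derivatives vanish at $\partial K$ because $\mathrm{supp}(\mathbf{g})\subset K$.
After integrating by parts enough times to strip every derivative off the free components
of $\xi$, the boundary terms disappear and one is left with
$\int_K \langle \mathcal{W}, \xi_{\mathrm{free}}\rangle\,dt$, where $\mathcal{W}$ is an
expression in $h_1,\dot h_1,s,\dot s, s_{(1)},s_{(2)},\phi$. Matching $\mathcal{W}$ against
the definition $(\ref{VDL})$ of $W_\gamma$ identifies the integrand exactly, so stationarity
for all admissible $\xi_{\mathrm{free}}$ forces $W_\gamma\equiv 0$ by the fundamental lemma
of the calculus of variations, and conversely $W_\gamma\equiv 0$ kills the first variation.

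The main obstacle is bookkeeping rather than conceptual: one must correctly identify which
components of $\xi$ are genuinely free (the normal directions transverse to the osculating
data) versus which are slaved by the second-order normalization conditions
$A_0=M_0,\dots,A_{n+1}=M_{n+1}$. The normalization imposes differential constraints on
$\xi$ through the structure equations, and before integrating by parts I would have to solve
those constraints to eliminate the dependent components, so that the surviving variation is
unconstrained and the fundamental lemma applies. Keeping track of the covariant derivative
$D$ in the normal bundle $(\ref{D})$ during the integration by parts—so that the terms
$(\dot h_1-3\,{}^t\!s\cdot\dot s)$, $(h_2^2-2h_1)S_\gamma$, and $D^2(S_\gamma)$ of
$(\ref{VD})$ reassemble into the invariant combination $W_\gamma$—is where I expect the
computation to be delicate, and it is precisely the point at which the definition
$(\ref{VDL})$ of $W_\gamma$ must be used to recognize the final integrand.
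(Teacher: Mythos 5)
Your first-variation computation is essentially the paper's own argument: a $\tau$-family of second-order frames along the deformed curves, the splitting of the pulled-back Maurer--Cartan form into a $du$-part (the matrix of (\ref{LS})) and a transverse part, the compatibility equations from (\ref{MC}), and repeated integration by parts until the integrand collapses to $\tfrac12\langle V_{\mathbf g},W_\gamma\rangle$, where $V_{\mathbf g}=\sum_{a\ge 2}\overline\lambda{}^a_0 A_a$ is the normal displacement field of the variation. That establishes the ``if'' direction ($W_\gamma\equiv 0$ implies stationarity) and reduces the ``only if'' direction to one remaining claim.

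The gap is in that remaining claim. To apply the fundamental lemma you must know that, as $\mathbf g$ ranges over compactly supported variations of the curve through generic time-like immersions, the infinitesimal variation $V_{\mathbf g}$ realizes enough sections of $\mathcal N^2(\gamma)$ --- locally, every $\rho A_j$ with $\rho$ compactly supported in a small interval and $j=2,\dots,n$. You assert this (``the normal directions transverse to the osculating data'' are ``genuinely free''), but you frame it as bookkeeping of the constraints that the second-order normalization imposes on $\xi$. That conflates two different issues: solving the structure-equation constraints tells you which components of $\xi$ determine the others, but it does not tell you that every admissible choice of the free components is actually induced by a deformation of $\gamma$ inside $\mathcal E^{1,n-1}$. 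The paper devotes the entire second half of its proof to exactly this realization step: it uses conformal invariance to place $\gamma$ locally in the Minkowski chamber via $\mathbf j$, takes a flat orthogonal trivialization $\mathbf b_2,\dots,\mathbf b_n$ of the Minkowski normal bundle of the underlying curve $\alpha$, relates it to the chosen second-order frame through the gauge matrix $X(r,x,y,R)$ of (\ref{gauge}), and then exhibits the explicit variation $\gamma_\tau=\mathbf j\circ\bigl(\alpha+\tau\sum_j m^j\mathbf b_j\bigr)$ with $m=(\rho/r)\,{}^t(R^2_j,\dots,R^n_j)$, verifying that $V_{\mathbf g}=\rho A_j$. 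Some such construction must be supplied; without it your argument proves only that $W_\gamma\equiv 0$ is sufficient for stationarity, not that it is necessary.
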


\begin{proof}{First we prove that a generic time-like curve satisfying $W_{\gamma}=0$ is a conformal world line. Without loss of generality we assume that $\gamma$ is parameterized by conformal parameter. Keep in mind that $W_{\gamma}=0$ if and only if
\begin{equation}\label{EL1}
\dot{h}_1=3h_2\dot{h}_2,\quad D^2(S_{\gamma})=(h_2^2-2h_1)S_{\gamma}.
\end{equation}
Let fix a second-order frame $A:I\to \mathrm{A}^{\uparrow}_+(2,n)$ and
$$\mathbf{g}:(u,\tau)\in I\times (-\epsilon,\epsilon)\to \gamma_{\tau}(u)\in \mathcal{E}^{1,n-1}$$
be a compactly supported variation of $\gamma$ such that $\mathrm{supp}(\mathbf{g})\subseteq K$. Eventually shrinking the interval $(-\epsilon, \epsilon)$, we may assume that $\gamma_{\tau}$ is generic and equipped with its intrinsic orientation, for every $\tau$. Then, there is a differentiable map
$$\mathcal{A}:(u,\tau)\in I\times (-\epsilon,\epsilon)\to \mathcal{A}_{\tau}(u)\in \mathrm{A}^{\uparrow}_+(2,n)$$
such that $\mathcal{A}_0=A$ and $\mathcal{A}_{\tau}$ is a second-order frame along $\gamma_{\tau}$, for every $\tau\in (-\epsilon,\epsilon)$. We then have
$$\mathcal{A}^{-1}d\mathcal{A} = Qdu+\Lambda dt,$$
where
$$Q=\left(
                                 \begin{array}{ccccc}
                                   0 & -m & v & 0 & 0 \\
                                   v & 0 & 0 & 0 & m \\
                                   0 & 0 & 0 & -^tp & 0 \\
                                   0 & 0 & p & \psi & 0 \\
                                   0 & v & 0 & 0 & 0 \\
                                 \end{array}
                               \right),$$
and
$$\Lambda=\left(
                                 \begin{array}{ccccc}
                                   \lambda_0^0 & -\lambda_{n+1}^1 & \lambda^2_{n+1} & ^t\lambda_{n+1} & 0 \\
                                   \lambda^1_0 & 0 & \lambda^2_1 & ^t\lambda_1 & \lambda^1_{n+1} \\
                                   \lambda^2_0 & \lambda^2_1 & 0 & -^t\lambda_2 & \lambda^2_{n+1} \\
                                   \lambda_0 & \lambda_1 & \lambda_2 & \lambda & \lambda_{n+1} \\
                                   0 & -\lambda^1_0 & \lambda^2_0 & \lambda_0 & -\lambda^0_0 \\
                                 \end{array}
                               \right).
$$
The entries
\[
\begin{split}
& v,m,\lambda^0_0,\lambda^1_0,\lambda^2_0,\lambda_{n+1}^1,\lambda^2_{n+1}
:I\times (-\epsilon,\epsilon)\to \R,\\
& p, \lambda_0,\lambda_2,\lambda_2,\lambda_{n+1}:I\times (-\epsilon,\epsilon)\to \R^{n-2},\\
& \psi,\lambda :I\times (-\epsilon,\epsilon)\to \mathfrak{o}(n-2).
\end{split}
\]
of $Q$ and $\Lambda$ are smooth maps. Their restrictions to $I\cong I\times \{0\}$ are denoted with the same symbols with an over-bar. Note that $\mathrm{supp}(\overline{\Lambda})\subseteq K$. The cross section
$$V_{\mathbf{g}}=\sum_{a=2}^{n}\overline{\lambda}^a_0 A_a:I\to \mathcal{N}^2(\gamma)$$
does not depend on the choice of $A$ and $\mathcal{A}$. We call $V_{\mathbf{g}}$ the {\it infinitesimal variation} of $\mathbf{g}$. From $\dot{A}=A\cdot \overline{Q}$ and taking into account (\ref{LS}), we obtain
$\overline{\upsilon}=1$, $\overline{m}_1=h_1$, $\overline{p}=s$ and $\overline{\psi}=\phi$.
From the Maurer-Cartan equations we have
$$\partial_u\Lambda - \partial_tQ = \Lambda\cdot Q - Q\cdot \Lambda.$$
This implies
\[
\begin{split}
&(\partial_t v)|_I = \overline{\lambda}_0^0 + \dot{\overline{\lambda}}^{1}_{0},\\
&\overline{\lambda}_0^0 =\frac{1}{2}(\dot{\overline{\lambda}}^0_{2}-\dot{\overline{\lambda}}^1_{0}-
\overline{\lambda}_{n+1}\cdot s-h_1\overline{\lambda}^2_1),\\
&\dot{\overline{\lambda}}_{0}=\overline{\lambda}_1+\overline{\lambda}_0^2s-\phi \cdot \overline{\lambda}_0,\\
&\dot{\overline{\lambda}}_1=-h_1\overline{\lambda}_0-\overline{\lambda}_{n+1}-\overline{\lambda}^2_1s-\phi\cdot\overline{\lambda}_1,\\
& \dot{\overline{\lambda}}_{n+1}=h_1\overline{\lambda}_1+\overline{\lambda}_2-\overline{\lambda}^2_0s-\phi\cdot \overline{\lambda}_{n+1},\\
&\dot{\overline{\lambda}}_0^2=\overline{\lambda}^2_1-s\cdot \overline{\lambda}_0.
\end{split}
\]
From these equalities, integrating by parts and taking into account that $\mathrm{supp}(\overline{\lambda}^i_j)\subset K$, $i,j=0,\dots, n+1$,
we have
\[\begin{split}
&\frac{d}{dt}\left(\mathcal{S}_K(\gamma_t)\right)|_{t=0}=\int_K (\partial_t v|_I)du =
\int_K (\overline{\lambda}_0^0+\dot{\overline{\lambda}}^1_0)du=\int_K \overline{\lambda}_0^0du=\\
&= -\frac{1}{2}\int_K (s\cdot(-\dot{\overline{\lambda}}_1-
h_1\overline{\lambda}_0-\overline{\lambda}_1^2 s-\phi\cdot \overline{\lambda}_1 )+h_1(\dot{\overline{\lambda}}_0^2+s\cdot \overline{\lambda}_0))du=\\
&=\frac{1}{2}\int_K(-\dot{s}\cdot\overline{\lambda}_1+h_1s\cdot \overline{\lambda}_0+(s\cdot s)\overline{\lambda}^2_1+s\cdot \phi\cdot \overline{\lambda}_1+\dot{h}_1\overline{\lambda}^2_0+h_1s\cdot\overline{\lambda}_0)du=\\
&=-\frac{1}{2}\int_K(\dot{s}\cdot(\dot{\overline{\lambda}}_0+\overline{\lambda}_0^2s+\phi\cdot \overline{\lambda}_0))du+\frac{1}{2}\int_K(s\cdot \phi\cdot (\dot{\overline{\lambda}}_0+\overline{\lambda}^2_0 s+\phi\cdot \overline{\lambda}_0))du\\
&+\frac{1}{2}\int_K(k_1s\cdot \overline{\lambda}_0+(^ts\cdot s)(\dot{\overline{\lambda}}^2_0-s\cdot \overline{\lambda}_0)+\dot{h}_1\overline{\lambda}^2_0+h_1s\cdot \overline{\lambda}_0)du=\\
&=\frac{1}{2}\int_K(\overline{\lambda}^2_0(\dot{h}_1-3 ^ts\cdot \dot{s})+
\overline{\lambda}_0\cdot (\dot{s}_{(1)}+\phi\cdot s_{(1)}-((^ts\cdot s)-2h_1)s))du=\\
&=\frac{1}{2}\int_K(\overline{\lambda}^2_0(\dot{h}_1-3 ^ts\cdot \dot{s})+
\overline{\lambda}_0\cdot (s_{(2)}+((2h_1-^ts\cdot s))s))du= \frac{1}{2}\int_K \langle V_{\mathbf{g}},W_{\gamma}\rangle du.
\end{split}\]
This proves the result.
\medskip

\noindent Next we show that for each $u_0\in I$ there exist an open interval $J\subset I$ containing $u_0$ such that for every smooth function $\rho:I\to \R$ with compact support $K\subset J$ and every $j=2,\dots ,n$, there exist a compactly supported variation $\mathbf{g}$ such that  $V_{\mathbf{g}}=\rho A_j$. This clearly implies that a conformal world-line satisfies $W_{\gamma}=0$.

\noindent Using the conformal invariance of the functional we may suppose, without loss of generality, that $\gamma(u)$ belongs to the Minkowski chamber, for every $u$ lying in an open interval $J\subset I$ containing $u_0$. Then, $\gamma|_J=\mathbf{j}\circ \alpha$ where $\alpha:J\to \mathbb{M}^{1,n-1}$ is a time-like curve of the Minkowski space. Let $\mathbf{t}:J\to \mathbb{M}^{1,n-1}$ be the future-directed time-like unit tangent vector along $\alpha$ and
$$N(\alpha)=\{(u,\mathbf{v})\in J\times \mathbb{M}^{1,n-1} : ^{*}\mathbf{x}\cdot \mathbf{t}|_u=0\}$$
be the normal bundle of $\alpha$, equipped with the metric covariant derivative\footnote{$pr|_u$ is the orthogonal projection of $\mathbb{M}^{1,n-1}$ onto $N(\alpha)|_u$} $\nabla(\mathbf{v})=pr(\dot{\mathbf{v}})$. Let $(\mathbf{b}_2,\dots,\mathbf{b}_n)$ be a flat orthogonal trivialization of $N(\alpha)$ such that $(\mathbf{t},\mathbf{b}_2,\dots,\mathbf{b}_n)$ is positive-oriented. Then $(\alpha,\mathbf{t},\mathbf{b}_2,\dots, \mathbf{b}_n):J\to P^{\uparrow}_+(1,n-1)$ is a lift of $\alpha$ to the restricted Poincar\'e group and\footnote{$\mathbf{J}$ is the faithful representation of $P^{\uparrow}_+(1,n)$ into the conformal group.}
$$F=\mathbf{J}\circ (\alpha,\mathbf{t},\mathbf{b}_2,\dots, \mathbf{b}_n):J \to \mathrm{A}^{\uparrow}_+(2,n)$$
is a first-order frame field along $\gamma|_J$. Let $A:J\to \mathrm{A}^{\uparrow}_+(2,n)$ be a second order frame along $\gamma$. We then have $A=F\cdot X(r,x,y,R)$, where
$$x,r:J\to \R, r>0,\quad y : J\to \R^{n-1},\quad R=(R^i_j)_{i,j=2,\dots n} :J\to \mathrm{SO(n-1)}$$
are smooth maps and $X(r,x,y,R)$ is as in (\ref{gauge}). We put
$$m=^t(m^2,\dots,m^n)=\frac{\rho}{r}^t(R^2_j,\dots, R^n_j).$$
It is now an easy matter to check that the variation
\[\begin{split}
\gamma_{\tau}(u) &= \mathbf{j}\circ(\alpha|_u+ \tau\sum_{j=2}^n m^j(u)\mathbf{b}_j|_u), \quad
(u,\tau)\in J\times (-\epsilon,\epsilon),\\
\gamma_{\tau}(u) &=\gamma(u),\quad  (u,\tau)\in (I-J)\times (-\epsilon,\epsilon),
\end{split}\]
satisfies the required properties. This concludes the proof of the Theorem.}\end{proof}

\noindent The next theorem shows that conformal world-lines lie in an Einstein universe of dimension 2,3 or 4.

\begin{thm}\label{ThmB}{The trajectory of a conformal world-line is contained in a $m$-dimensional Lorentzian cycle, with $2\le m \le 4$.}\end{thm}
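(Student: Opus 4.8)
The plan is to exploit the variational equation $W_\gamma=0$ established in Theorem \ref{thmA}, which by (\ref{EL1}) is equivalent to the system
\begin{equation*}
\dot h_1 = 3h_2\dot h_2, \qquad D^2(S_\gamma) = (h_2^2 - 2h_1)S_\gamma,
\end{equation*}
and to read off from it that the osculating structure of $\gamma$ stabilizes after a bounded number of derivatives. The key observation is that the second equation says the normal covariant derivative $D$ preserves the line spanned by $S_\gamma$ up to a scalar: since $D(S_\gamma)$ is the only genuinely new direction produced in $\mathcal{N}^3(\gamma)$ (recall $\mathcal{T}^4(\gamma) = \mathrm{span}(M_0,M_1,M_2,S_\gamma,M_{n+1})$ and $\mathcal{T}^5(\gamma)$ adds $D(S_\gamma)$), the equation $D^2(S_\gamma)=(h_2^2-2h_1)S_\gamma$ forces the $D$-invariant subbundle of $\mathcal{N}^3(\gamma)$ generated by $S_\gamma$ to have rank at most two, spanned by $S_\gamma$ and $D(S_\gamma)$.

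First I would make this precise by introducing the smallest $D$-parallel subbundle $\mathcal{K}\subseteq \mathcal{N}^3(\gamma)$ containing $S_\gamma$, i.e. the span of $S_\gamma, D(S_\gamma), D^2(S_\gamma),\dots$, and show the stated equation collapses it: $D^2(S_\gamma)\in \mathrm{span}(S_\gamma)$ gives $\dim\mathcal{K}\le 2$. I would then consider the total bundle
\begin{equation*}
\mathbb{V} = \mathrm{span}\bigl(M_0, M_1, M_2, M_{n+1}\bigr)\oplus \mathcal{K},
\end{equation*}
whose fibrewise dimension is at most $4+2=6$, but one must be careful: $M_2$, $S_\gamma$, $D(S_\gamma)$ all live in the osculating tower and the counting must be organized around the structure equation (\ref{LS}). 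The cleaner route is to show directly that $\mathbb{V}$ is a \emph{constant} subspace $\mathbb{W}\subset\R^{2,n}$ by proving $\dot{\mathbb{V}}\subseteq \mathbb{V}$, i.e. that the derivative of every spanning section stays in $\mathbb{V}$; this uses (\ref{LS}), the definitions (\ref{SS1})--(\ref{VD}) of $S_\gamma$ and $W_\gamma$, and crucially $W_\gamma=0$ to kill the term that would otherwise push $D^2(S_\gamma)$ out of $\mathrm{span}(M_2,S_\gamma)$. Since $\Gamma\in\mathbb{W}$ for all $u$, the trajectory lies in $\mathcal{E}(\mathbb{W})$.

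The remaining work is to pin down the dimension $m$ of the resulting Lorentzian cycle as $2$, $3$, or $4$ rather than merely bounding it by $6$. Here I would separate cases according to $h_2$: if $S_\gamma\equiv 0$ (equivalently $h_2\equiv 0$) then $\mathbb{W}=\mathcal{T}^3(\gamma)|_{u_0}$ is the constant span of $M_0,M_1,M_2,M_{n+1}$, a Lorentzian $4$-space, giving $m\le 3$; the degenerate subcases where even $M_2$ fails to move produce $m=2$. If $h_2\not\equiv 0$, then $S_\gamma$ contributes one space-like dimension and $D(S_\gamma)$ another, and I must check via the first equation $\dot h_1=3h_2\dot h_2$ (which integrates to $h_1 = \tfrac32 h_2^2 + \text{const}$) that no further independent normal directions appear, so that $\mathbb{W}$ has signature $(2,m)$ with $m\le 4$. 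The main obstacle I anticipate is the dimension bookkeeping: verifying that $\langle-,-\rangle$ restricted to $\mathbb{W}$ is nondegenerate of the right signature (so that $\mathcal{E}(\mathbb{W})$ is genuinely a Lorentzian cycle in the sense of the Definition), and ruling out that the generic, nondegenerate hypothesis is violated inside $\mathbb{W}$; this requires tracking which of $M_2, S_\gamma, D(S_\gamma)$ remain linearly independent and carefully using (\ref{CT}) together with $\langle S_\gamma,S_\gamma\rangle = h_2^2>0$ to control the signature.
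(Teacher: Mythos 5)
Your overall strategy is the same as the paper's: use $W_\gamma=0$ from Theorem \ref{thmA} to show that the osculating flag stabilizes at $\mathrm{span}(M_0,M_1,M_2,M_{n+1},S_\gamma,DS_\gamma)$, deduce that this is a fixed Lorentzian subspace $\mathbb{W}$ of dimension between $4$ and $6$, and conclude that the trajectory lies in $\mathcal{E}(\mathbb{W})$. However, the step you defer as ``dimension bookkeeping'' is not a routine verification but the actual content of the proof, and your proposal does not contain the idea needed to carry it out. The object you call $\mathcal{K}$ (the span of $S_\gamma, DS_\gamma, D^2S_\gamma,\dots$) is a priori only a sheaf, not a constant-rank subbundle: $S_\gamma$ may vanish at some points and $S_\gamma\wedge DS_\gamma$ at others, so the fibre dimension of your $\mathbb{V}$ could jump, and the implication ``$\dot{\mathbb{V}}\subseteq\mathbb{V}$, hence $\mathbb{V}$ is constant'' is only valid once constant rank is known. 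The paper resolves this by writing the Euler--Lagrange equations in a second-order frame as the homogeneous linear first-order system (\ref{EL2}) for $(s,s_{(1)})$, the components of $S_\gamma$ and $DS_\gamma$. Uniqueness for linear ODEs then yields a clean trichotomy: $s\wedge s_{(1)}$ either never vanishes (rank $2$, giving $m=4$) or vanishes identically; and in the latter case $(s,s_{(1)})$ either vanishes identically (giving $m=2$) or never vanishes, with $S_\gamma$ and $DS_\gamma$ everywhere parallel and $S_\gamma$ vanishing at most on a discrete set. That last case still requires care: one takes a unit section $P$ of the line bundle $\mathrm{span}(S_\gamma,DS_\gamma)$, shows that $A_0\wedge A_1\wedge A_2\wedge P\wedge A_{n+1}$ satisfies a linear ODE off the zero set of $S_\gamma$, and extends the constancy of the corresponding point of the Grassmannian across the zeros by continuity. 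None of this mechanism appears in your outline, and without it the case analysis cannot be closed.

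Two smaller points. A $4$-dimensional Lorentzian subspace $\mathbb{W}$ has signature $(2,2)$ and yields a cycle $\mathcal{E}(\mathbb{W})$ of dimension $\dim\mathbb{W}-2=2$, so the case $S_\gamma\equiv 0$ gives $m=2$ outright, not ``$m\le 3$''; and there is no further degeneration in which ``$M_2$ fails to move,'' since $\dot{M}_2=M_0+S_\gamma$ and $\mathcal{T}^3(\gamma)$ is always $4$-dimensional for a generic curve. On the other hand, the signature verification you worry about at the end is immediate from (\ref{CT}) together with the fact that $S_\gamma$ and $DS_\gamma$ are space-like and orthogonal to $\mathcal{T}^3(\gamma)$, so that is not where the difficulty lies.
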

\begin{proof}{Let $\gamma$ be a conformal world-line parameterized by conformal parameter. We consider a second-order frame field $A=(A_0,\dots A_{n+1})$ along $\gamma$. Then
$$\mathcal{T}^5(\gamma)|_u=\mathrm{Span}(A_0|_u,A_1|_u,A_2|_u,S_{\gamma}|_u,DS_{\gamma}|_u,A_{n+1}|_u),$$
for every $u\in I$. From (\ref{LS}) we have
\begin{equation}\label{STR1}\begin{split}
\dot{A}_0&=A_1,\quad \dot{A}_1=-h_1A_0-A_{n+1},\\
\dot{A}_2&=A_0+S_{\gamma},\quad \dot{A}_{n+1}=-h_1A_1+A_2,
\end{split}
\end{equation}
and
\begin{equation}\label{STR2}
\dot{A}_j=-s_jA_2+\sum_{k=3}^{n}\phi_j^kA_k,\quad  j=3,\dots, n.
\end{equation}
The first equation in (\ref{EL1}) implies
\begin{equation}\label{EL1Bis}(h_2)^2=\frac{2}{3}h_1+c_1,\quad c_1\in \R.\end{equation}
The remaining equations in (\ref{EL1}) can be written as
\begin{equation}
\label{EL2}
\dot{s}=-\phi\cdot s+s_{(1)},\quad \dot{s}_{(1)}=-(\frac{4}{3}h_1-c_1)s-\phi\cdot s_{(1)}
\end{equation}
or, equivalently, in the form
\begin{equation}
\label{EL3}\begin{cases}
\frac{d}{du}(S_{\gamma})=DS_{\gamma}-(\frac{2}{3}h_1+c_1)A_2,\\ \frac{d}{du}(DS|_{\gamma})=-(\frac{4}{3}h_1-c_1)S_{\gamma}-\frac{1}{3}\dot{h}_1A_2.
\end{cases}
\end{equation}
From (\ref{EL2}) we see that $s\wedge s_{(1)}$ is a solution of the linear system
$$\frac{d}{du}(s\wedge s_{(1)}) = -(\phi\cdot s)\wedge s_{(1)}- s\wedge (\phi\cdot s_{(1)}).$$
Hence, two possibilities may occur :
\begin{itemize}
\item Case I : $s|_u\wedge s_{(1)}|_u\neq 0$, for every $u\in I$
\item Case II : $s\wedge s_{(1)}$ vanishes identically.
\end{itemize}
{\it Case I}. The vectors fields $S_{\gamma}$ and $DS_{\gamma}$ are everywhere linearly independent. Then, the osculating spaces $\mathcal{T}^5(\gamma)|_u$ are six-dimensional.
Using (\ref{STR1}) and (\ref{EL3}) we obtain
$$\frac{d}{du}(A_0\wedge A_1\wedge A_2\wedge S_{\gamma}\wedge DS_{\gamma}\wedge A)=0.$$
Hence, $\mathcal{T}^5(\gamma)|_u$ coincides with a fixed $6$-dimensional Lorentzian subspace $\mathbb{W}_{\gamma}\subseteq \R^{2,n}$, for each $u\in I$. This implies that the trajectory of $\gamma$ is contained in the $4$-dimensional Lorentzian cycle $\mathcal{E}(\mathbb{W}_{\gamma})$.
\medskip

\noindent {\it Case II}. Since $(s,s_{(1)})$ is a solution of the linear system (\ref{EL2}), there are two possibilities :
\begin{itemize}
\item Case II.1 : $s=s_{(1)}=0$;
\item Case II.2 : $s^t\cdot s+^ts_{(1)}\cdot s_{(1)}>0$.
\end{itemize}
\noindent {\it Case II.1}. If $s=\dot{s}=0$, then (\ref{STR1}) implies
$$\frac{d}{du}(A_0\wedge A_1\wedge A_2\wedge A_{n+1})=0.$$
Therefore, $\mathcal{T}^3(\gamma)|_u$ coincides with a fixed $4$-dimensional Lorentzian subspace
$\mathbb{W}_{\gamma}\subseteq \R^{2,n}$, for every $u\in I$. Hence, the trajectory of $\gamma$ is contained in the $2$-dimensional Lorentzain cycle $\mathcal{E}(\mathbb{W}_{\gamma})$.

\noindent {\it Case II.2 } If $s^t\cdot s+^ts_{(1)}\cdot s_{(1)}>0$ and $s\wedge s_{(1)}=0$, then $\mathrm{Span}(S_{\gamma}|_u,DS_{\gamma}|_u)$ is a $1$-dimensional space-like subspace, for every $u\in I$. Thus
$$\mathcal{P}=\{(u,V)\in I\times \R^{2,n}: V\in \mathrm{Span}(S_{\gamma}|_u,DS_{\gamma}|_u)\}$$
is a rank $1$ vector bundle. Let $P$ be a unit-length cross section of $\mathcal{P}$. The identity $s\wedge s_{(1)}=0$ implies that $S_{\gamma}$ and $DS_{\gamma}$ are both proportional to $P$. We put $\widetilde{I}=I-I_*$, where $I_*$ is the discrete set $\{u\in I : S_{\gamma}|_u=0\}$. On $\widetilde{I}$ the fifth and fourth-order osculating bundles are spanned by $A_0,A_1,A_2,P,A_{n+1}$. This implies the existence of a smooth function $f:\widetilde{I}\to \R$ such that
$$\dot{P}|_{\widetilde{I}}=fP|_{\widetilde{I}},\quad \mathrm{mod}(A_0,A_1,A_2,A_{n+1}),$$
From (\ref{STR1}) and taking into account the previous identity we get
\begin{equation}\label{CC}\frac{d}{du}(A_0\wedge A_1\wedge A_2\wedge P\wedge A_{n+1})|_{\widetilde{I}}= f( A_0\wedge A_1\wedge A_2\wedge P\wedge A_{n+1})|_{\widetilde{I}}.\end{equation}
Let $\mathrm{Gr}_{5}(\R^{n+2})$ be the Grassmannian of the $5$-dimensional vector subspace of $\R^{n+2}$. From (\ref{CC}) it follows that the map
$$u\in I\to \mathrm{Span}(A_0|_u\wedge A_1|_u\wedge A_2|_u\wedge P|_u\wedge A_{n+1}|_u)\in \mathrm{Gr}_{5}(\R^{n+2})$$
is constant on $\widetilde{I}$. By continuity is constant on $I$. Then, the fourth-order osculating spaces coincide with a fixed $5$-dimensional Lorentzian subspace $\mathbb{W}_{\gamma}\subseteq \R^{2,n}$. Hence, the trajectory of $\gamma$ is contained in the $3$-dimensional Lorentzain cycle $\mathcal{E}(\mathbb{W}_{\gamma})$.}\end{proof}

\noindent
This theorem shows that if $\gamma$ is a conformal world-line then three possibilities may occur :
\begin{itemize}
\item $\gamma$ is trapped in a $4$-dimensional Einstein universe and there are no 3-dimensional Lorentzian cycles containing the trajectory of $\gamma$. If this occurs, we say that $\gamma$ is a {\it linearly full conformal world-line}.
\item the trajectory is trapped in a $3$-dimensional Lorentzian cycle but does not lie in any $2$-dimensional Lorentzian cycle.
\item the trajectory is trapped in a $2$-dimensional Lorentzian cycle.
\end{itemize}

\section{Linearly full conformal world-lines}\label{s3}

\noindent From now on we limit ourselves to consider linearly full conformal world-lines of the four-dimensional Einstein universe. We also suppose that the parametrization is by the conformal parameter and that the orientation is the intrinsic one. These assumptions will be implicitly assumed throughout the subsequent discussions.

\subsection{Canonical frames and conformal curvatures}\label{ss:conf-fr}
\begin{prop}\label{propuniq}{Let $\gamma : I\to \mathcal{E}^{1,3}$ be a conformal world-line. Then, there exist a unique second-order conformal frame $B:I\to A^{\uparrow}_+(2,3)$ such that
\begin{equation}\label{mcc}\dot{B}=B\cdot \mathcal{K},\end{equation}
where
\begin{equation}\label{mcc2}\mathcal{K}=\left(
                             \begin{array}{cccccc}
                               0 & -k_1 & 1 & 0 & 0 & 0 \\
                               1 & 0 & 0 & 0 & 0 & k_1 \\
                               0 & 0 & 0 & -k_2 & 0 & 1 \\
                               0 & 0 & k_2 & 0 & -k_3 & 0 \\
                               0 & 0 & 0 & k_3 & 0 & 0 \\
                               0 & -1 & 0 & 0 & 0 & 0 \\
                             \end{array}
                           \right).
\end{equation}
 and $k_1,k_2,k_3:I\to \R$ are smooth functions with $k_2>0$ and $k_3\neq 0$. In addition, if $k_1$, $k_2$ and $k_3$ are non-constant then there exist $c_1,c_2,c_3\in \R$ with $c_3\neq 0$ such that
\begin{equation}\label{eqmrk4}
k_1=\frac{3}{2}k_2^2+c_1,\quad k_3=c_3k_2^{-2},\quad \dot{k}_2^2+k_2^4+c_3^2k_{2}^{-2}+2c_1k_2^2+c_2=0.
\end{equation}}\end{prop}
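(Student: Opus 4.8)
The plan is to obtain $B$ by reducing an arbitrary second-order frame with the residual gauge freedom, and then to read the three relations off the Euler--Lagrange equations $W_\gamma=0$ in this frame. First I would recall that along a generic curve a second-order frame $A=(A_0,\dots,A_{n+1})$ exists, and that for the case $n=4$ its structure equation is (\ref{LS}) with $\upsilon_\gamma=1$, the normal part $\phi$ lying in $\mathfrak{o}(2)$ and $s={}^t(s^3,s^4)$. The only gauge freedom left among second-order frames is the rotation $R\in\SO(2)$ acting on the normal pair $(A_3,A_4)$, the remaining parameters $r,x,y$ in (\ref{gauge}) being frozen by $A_0=M_0$, $A_1=M_1$, $A_2=M_2$, $A_{n+1}=M_{n+1}$. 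Since $\gamma$ is linearly full it falls under Case I in the proof of Theorem \ref{ThmB}, so $S_\gamma\wedge DS_\gamma$ never vanishes; in particular $S_\gamma\neq 0$ everywhere and $h_2=\sqrt{\langle S_\gamma,S_\gamma\rangle}>0$. Because $\SO(2)$ acts freely and transitively on the unit vectors of the normal plane, there is a unique smooth $R$ rotating $A_3$ onto $S_\gamma/h_2$; the resulting frame $B$ is the unique second-order frame with $s^3=h_2$ and $s^4=0$. Setting $k_2:=s^3=h_2>0$, $k_1:=h_1$ and $k_3:=\phi^4_3$ and substituting $s^4=0$ into (\ref{LS}) turns the structure equation into $\dot B=B\cdot\mathcal{K}$ with $\mathcal{K}$ as in (\ref{mcc2}).

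Next I would pin down $k_3\neq 0$. Using (\ref{D12S}) together with $s={}^t(k_2,0)$ and $\phi^4_3=k_3$ gives $DS_\gamma=\dot k_2\,A_3+k_2k_3\,A_4$, so $S_\gamma\wedge DS_\gamma=k_2^2k_3\,A_3\wedge A_4$. The Case I hypothesis $S_\gamma\wedge DS_\gamma\neq 0$ together with $k_2>0$ then forces $k_3\neq 0$, which completes the first assertion (existence, uniqueness, $k_2>0$ and $k_3\neq 0$). For the curvature relations I would invoke the Euler--Lagrange equations in the form (\ref{EL1}). Integrating the first equation $\dot h_1=3h_2\dot h_2=\tfrac32\tfrac{d}{du}(h_2^2)$ gives $h_1=\tfrac32 h_2^2+\mathrm{const}$, i.e. $k_1=\tfrac32 k_2^2+c_1$ for a suitable constant $c_1$.

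For the two remaining relations I would expand the second equation $D^2(S_\gamma)=(h_2^2-2h_1)S_\gamma$ in the canonical frame. Computing $s_{(1)}={}^t(\dot k_2,\,k_2k_3)$ and then $s_{(2)}$ from (\ref{D12S}), and carefully carrying the $\phi$-contributions (which is precisely where $k_3$ and $k_3^2$ enter), the $A_4$-component of the equation reads $2k_3\dot k_2+k_2\dot k_3=0$, that is $\tfrac{d}{du}(k_2^2k_3)=0$; hence $k_2^2k_3=c_3$ is constant, with $c_3\neq 0$ since $k_3\neq 0$, giving $k_3=c_3k_2^{-2}$. The $A_3$-component gives $\ddot k_2-k_2k_3^2=(k_2^2-2k_1)k_2$; substituting $k_1=\tfrac32 k_2^2+c_1$ and $k_3^2=c_3^2k_2^{-4}$ produces the autonomous equation $\ddot k_2=c_3^2k_2^{-3}-2k_2^3-2c_1k_2$. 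Multiplying by $2\dot k_2$ exhibits every term as a total $u$-derivative, and one integration yields the first integral $\dot k_2^2+k_2^4+c_3^2k_2^{-2}+2c_1k_2^2+c_2=0$, with $c_2$ the constant of integration.

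The individual computations are routine once the frame is fixed, so the only genuine obstacle is conceptual: recognizing that the linearly-full hypothesis is exactly Case I, and that this supplies the two non-vanishing facts the argument needs, namely $S_\gamma\neq 0$ (so the $\SO(2)$ gauge can be fixed uniquely and smoothly, giving existence and uniqueness of $B$) and $k_2k_3\neq 0$ (so $k_3\neq 0$ and $c_3\neq 0$). I would also remark that the three identities hold verbatim whether or not the $k_i$ are constant; the non-constancy hypothesis is used only downstream, to guarantee $\dot k_2\not\equiv 0$, so that the last equation genuinely determines $k_2$, and hence $k_1$ and $k_3$, as a nontrivial solution to be integrated by elliptic functions.
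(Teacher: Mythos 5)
Your proposal is correct and follows essentially the same route as the paper: normalize the residual $\mathrm{SO}(2)$ gauge so that $B_3=S_\gamma/\sqrt{\langle S_\gamma,S_\gamma\rangle}$ (with $B_4$ fixed by orientation), identify $k_1=h_1$, $k_2=h_2$, $k_3$ as the normal connection coefficient, and then read the three relations off the components of $W_\gamma=0$, integrating once to produce $c_1$, $c_3$ and the first integral with constant $c_2$. Your explicit use of the linearly-full (Case I) hypothesis to justify $S_\gamma\neq 0$ and $k_2^2k_3\neq 0$ makes precise a point the paper's proof leaves implicit, but it is the same argument.
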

\begin{proof}{Let $(M_0,M_1,M_2,M_{5})$ be the canonical trivialization of $\mathcal{T}^3(\gamma)$ and $S_{\gamma}:I\to \mathcal{N}^3(\gamma)$ be the cross section defined as in (\ref{SS1}). We set $$B_3=S_{\gamma}/\sqrt{\langle S_{\gamma},S_{\gamma}\rangle}$$
and we pick the unique unit cross section $B_4:I\to \mathcal{N}^4(\gamma)$ such that $\mathrm{det}(M_0,M_1,M_2,M_3,B_4,M_5)>0$. Putting
$$B_0=M_0,\quad B_1=M_1,\quad B_2=M_2,\quad B_5=M_5,$$
the map $B=(B_0,\dots, B_5)$ is a second-order frame along $\gamma$. In view of (\ref{LS}) we have
\begin{equation}\label{FR1}\dot{B}_0=B_1,\quad \dot{B}_1=-k_1B_0-B_5,\quad \dot{B}_5=k_1B_1+B_2\end{equation}
where\footnote{cfr (\ref{cinv1}) for the definition of $h_1$} $k_1=h_1$. Since $D$ is a metric covariant derivative, $DB_3$ is a multiple of $B_4$ and hence $DB_3=k_3B_4$, for some non-zero function $k_3$. From (\ref{LS}) we have
\begin{equation}\label{FR2}\dot{B}_2=B_0+k_2B_3,\end{equation}
where $k_2=\sqrt{\langle S_{\gamma},S_{\gamma}\rangle}>0$. Keeping in mind (\ref{EL3}) we get
\begin{equation}\label{FR3}\dot{B}_3=-k_2B_2+k_3B_4.\end{equation}
Furthermore, since $B$ takes values in $\mathrm{A}^{\uparrow}_{+}(2,4)$ and using (\ref{FR1})-(\ref{FR3}) we obtain
\begin{equation}\label{FR4}\dot{B}_4=-k_3B_3.\end{equation}
Combining (\ref{FR1})-(\ref{FR4}) we deduce that $B$ satisfies (\ref{mcc}).

\noindent Suppose that the functions $k_1,k_2$ and $k_3$ are non-constant. The first and second covariant derivatives of $S_{\gamma}$ can be written as
$$DS_{\gamma}=\dot{k}_2B_3+k_2k_3B_4,\quad D^2S_{\gamma}=(\ddot{k}_2-k_2k_3^2)B_3+(2\dot{k}_2k_3+k_3\dot{k}_2)B_4.$$
Then, from variational equation $W_{\gamma}=0$ we get
\begin{equation}\label{EM2}
\ddot{k}_2=k_2(k_3^2+k_2^2-2k_1),\quad k_2\dot{k}_3+2k_3\dot{k}_2=0\quad \dot{k}_1=3k_2\dot{k}_2.
\end{equation}
The third and the second equations in (\ref{EM2}) imply the existence of two constants $c_1$ and $c_3\neq 0$ such that
\begin{equation}\label{EM3}k_1=\frac{3}{2}k_2^2+c_1,\quad k_3=c_3k_2^{-2}.\end{equation}
Substituting (\ref{EM3}) into (\ref{EM2}) we find
$$\ddot{k}_2=-2k_2^{3}+c_3^2k^{-3}-4k_2-2c_1k_2.$$
Then there exist a constant $c_2$ such that
$$\dot{k}_2^2+k_2^4+c_3^2k_{2}^{-2}+2c_1k_2^2+c_2=0.$$}\end{proof}

\begin{defn}{We call $B$ the {\it canonical conformal frame}. The functions $k_1,k_2,k_3$ are said the {\it conformal curvatures}. The constants $c_1,c_2,c_3$ are referred to as the {\it characters} of the world-line.}\end{defn}

\begin{remark}\label{remarkuniq}{Conversely, if $k_1,k_2,k_3:I\to \R$ are smooth functions satisfying (\ref{eqmrk4}), by solving the linear system (\ref{mcc}) with initial condition $\mathrm{B}(u_0)\in \mathrm{A}^{\uparrow}_+(2,4)$, we get a smooth map $\mathrm{B}:I\to \mathrm{A}^{\uparrow}_+(2,4)$. Then, $\gamma=[B_0]$ is a linearly full conformal world-line with conformal curvatures $k_1,k_2,k_3$ and canonical conformal frame $B$. Any other world-line $\widetilde{\gamma}$ with the same curvatures is congruent to $\gamma$ with respect to the restricted conformal group, ie there exist $\mathbf{X}\in \mathrm{A}^{\uparrow}_+(2,4)$ such that $\widetilde{\gamma}=\mathbf{X}\cdot \gamma$.}\end{remark}

\begin{remark}{The sign ambiguity of the third curvature can be removed with the following reasoning :  if the third conformal curvature of $\gamma$ is $k_3$ and if $\mathbf{X}$ is an orientation-preserving and time-reversing conformal transformation, then $\mathbf{X}\cdot \gamma(-u)$ is a conformal world-line with conformal curvatures $k_1(-u),k_2(-u)$ and $-k_3(-u)$. Therefore, up to a time-reversing conformal transformation, the curvature $k_3$ is positive.}\end{remark}

\begin{defn}{The sign of the third curvature is said the {\it conformal helicity} of the world-line. By the previous remark we see that is not restrictive to consider conformal world-lines with positive helicity. From now on this additional assumption is implicitly assumed.}\end{defn}

\begin{remark}{If $k_1,k_2$ and $k_3$ are constant, then the variational equations imply
$k_1=(k_2^2+k_3^2)/2$. In this case $\mathcal{K}$ is a fixed element of the Lie algebra $\mathfrak{a}(2,4)$ and $\gamma$ is congruent to the orbit through $[^t(1,0,0,0,0)]$ of the $1$-parameter group of conformal transformations $u\in \R\to \mathrm{Exp}(u\mathcal{K})\in A^{\uparrow}_+(2,4)$.
Therefore, from a theoretical point of view the determination of the conformal world-lines with constant curvatures is reduced to the calculation of the exponentials of the matrices $u\mathcal{K}$. From a computational point of view this is an elementary but not completely straightforward matter and
requires a detailed analysis of the possible orbit types of the infinitesimal generator
$\mathcal{K}$.}\end{remark}

\subsection{Conformal curvatures in terms of Jacobi's elliptic functions} From now on we suppose that the conformal curvatures are non-constant.

\begin{lemma}{If $c_1,c_2,c_3$ are the characters of $\gamma$ then the third-order polynomial $Q_1(t)=t^3+2c_1t^2+c_2t+c_3^2$ has three distinct real roots $e_1,e_2,e_3$ such that $e_1<0<e_2<e_3$.}\end{lemma}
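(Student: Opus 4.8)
The plan is to convert the third (first-integral) equation of \eqref{eqmrk4}, namely $\dot{k}_2^2 + k_2^4 + c_3^2 k_2^{-2} + 2c_1 k_2^2 + c_2 = 0$, into a statement about the sign of $Q_1$. First I would set $t = k_2^2$; since $k_2 > 0$ is non-constant, $t$ is a non-constant positive function of the conformal parameter. Using $\dot{t} = 2 k_2 \dot{k}_2$ and multiplying the first integral by $4t = 4k_2^2$, the relation collapses to the clean identity $\dot{t}^2 = -4\, Q_1(t)$. This is the bridge between the curvature dynamics and the roots of $Q_1$, and deriving it is the only computational step.

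With this relation in hand the argument reduces to three applications of the intermediate value theorem. Because $t$ is non-constant there is a parameter value at which $\dot{t} \neq 0$; writing $t_0 > 0$ for the value of $t$ there, we obtain $Q_1(t_0) = -\tfrac14 \dot{t}^2 < 0$. On the other hand $Q_1(0) = c_3^2 > 0$, where the hypothesis $c_3 \neq 0$ enters, while $Q_1(t) \to -\infty$ as $t \to -\infty$ and $Q_1(t) \to +\infty$ as $t \to +\infty$. Hence $Q_1$ changes sign on each of the three disjoint intervals $(-\infty, 0)$, $(0, t_0)$ and $(t_0, +\infty)$, producing roots $e_1 < 0 < e_2 < t_0 < e_3$.

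Since $Q_1$ is a cubic it has at most three roots, so $e_1, e_2, e_3$ exhaust them; lying in three disjoint intervals they are automatically distinct and ordered exactly as claimed, $e_1 < 0 < e_2 < e_3$. Note that this avoids invoking Vieta's formulas or the conjugate-pair structure of complex roots: the three sign changes do all the work.

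I do not anticipate a genuine obstacle here, since once the substitution $t = k_2^2$ is performed everything is elementary. The one point deserving attention is verifying that the standing assumption of non-constant curvatures really forces $Q_1$ to assume a strictly negative value on $(0,\infty)$: this is exactly what guarantees two distinct positive roots rather than a degenerate, constant-curvature configuration, and it is the place where the hypothesis of this subsection is used decisively.
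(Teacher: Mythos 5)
Your proposal is correct, and it takes a genuinely different route from the paper's. Both arguments start from the same first integral, but you rewrite it as $\dot t^2=-4Q_1(t)$ with $t=k_2^2$ and then simply count sign changes of $Q_1$: strictly negative at some $t_0>0$ where $\dot t\neq 0$ (this is where non-constancy of $k_2$ enters), strictly positive at $0$ because $c_3\neq 0$, and the limits $\mp\infty$ at $t\to\mp\infty$, so the intermediate value theorem places one root in each of $(-\infty,0)$, $(0,t_0)$, $(t_0,+\infty)$, and a cubic has no room for more. The paper instead names the roots $e_1,e_2,e_3$ up front, writes $(k_2\dot k_2)^2=-(k_2^2-e_1)(k_2^2-e_2)(k_2^2-e_3)$ together with the Vieta relations, and runs a three-way case analysis (a complex-conjugate pair, a real double root, or three real roots all negative), using $c_3^2=-e_1e_2e_3>0$ to force $e_1<0$ in each bad case and conclude that the right-hand side would be non-positive, contradicting non-constancy of $k_2$. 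Your IVT argument is shorter and avoids the case analysis entirely; the paper's version has the side benefit of displaying the Vieta formulas \eqref{eee2} and the factored form \eqref{eee1}, which are reused immediately afterwards to define the phase parameters and derive \eqref{k21}. The two hypotheses ($c_3\neq 0$ and non-constant curvature) are used in essentially the same places in both proofs, and you correctly flag them as the decisive inputs.
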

\begin{proof}{Let $e_1,e_2,e_3$ be the roots of $Q_1$. Then,
$$\dot{k}_2^2+k_2^4+c_3^2k_{2}^{-2}+2c_1k_2^2+c_2=0$$
implies
\begin{equation}
\label{eee1}
(k_2\dot{k}_2)^2=-(k_2^2-e_1)(k_2^2-e_2)(k_2^2-e_3)
\end{equation}
and
\begin{equation}\label{eee2}
c_1=-\frac{1}{2}(e_1+e_2+e_3),\quad c_2=e_1e_2+e_1e_3+e_2e_3,\quad c_3^2=-e_1e_2e_3.\end{equation}
If two roots are complex conjugate each other, say $e_2$ and $e_3$, then the third equation of (\ref{eee2}) implies $e_1<0$. Hence the right hand side of (\ref{eee1}) is strictly negative. This contradicts the fact that $k_2$ is non constant. If $Q_1$ has a double root, say $e_2=e_3$, then the third equation of (\ref{eee2}) implies $e_1<0$. So, as in the previous case, the right hand side of (\ref{eee1}) is strictly negative. But this can't occur. Therefore, the roots are real and distinct. We choose the ordering $e_1<e_2<e_3$. Using again the third equation of (\ref{eee2}) we see that two possibilities may occur : either $e_1<e_2<e_3<0$ or else $e_1<0<e_2<e_3$. In the first case the right hand side of (\ref{eee1}) is negative. But, as in the previous cases, this conclusion is impossible. Thus the Lemma is proved.}\end{proof}

\begin{defn}{We say that $e_1,e_2,e_3$ are the {\it phase parameters} of the world-line. Note that, according to (\ref{eee2}),  the characters can be read off from the phase parameters.}\end{defn}

\noindent We put
\begin{equation}\label{ell}\ell_1=e_2\ell_3,\quad \ell_2=e_1\ell_4,\quad \ell_3=e_3-e_1,\quad \ell_4=e_3-e_2,\quad m=\ell_4/\ell_3,\end{equation}
and we denote by $K(m)$ and $\mathrm{sn}(-,m)$ the complete integral of the first kind and the
Jacobi's $sn$-function with parameter $m$\footnote{The parameter $m$ is the square of the modulus $k$ of the elliptic function. The reader pay attention to the fact that in the literature is also used the notation
$\mathrm{sn}(-,k)$ to denote the $\mathrm{sn}$-function with modulus $k$.}.

\begin{prop}\label{ConfCurv}{Let $\gamma$ be a conformal world-line with phase parameters $e_1<0<e_2<e_3$, then
\begin{equation}\label{k21}k_2(u)=\sqrt{\frac{\ell_1-\ell_2\mathrm{sn}^2(\sqrt{\ell_3}u+u_0,m)}
{\ell_3-\ell_4\mathrm{sn}^2(\sqrt{\ell_3}u+u_0,m)}},
\end{equation}
and
\begin{equation}\label{k13}k_1=\frac{3}{2}k_2^2-\frac{1}{2}(e_1+e_2+e_3),\quad k_3= \sqrt{-e_1e_2e_3}k_2^{-2},\end{equation}
where $u_0$ is a constant.
}\end{prop}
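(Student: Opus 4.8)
The plan is to treat the two displayed groups of equations separately, since they are of very different difficulty. The formulas in (\ref{k13}) require essentially no work: Proposition \ref{propuniq} already furnishes $k_1=\tfrac{3}{2}k_2^2+c_1$ and $k_3=c_3k_2^{-2}$ in (\ref{eqmrk4}), while the relations (\ref{eee2}) between the characters and the phase parameters give $c_1=-\tfrac{1}{2}(e_1+e_2+e_3)$ and $c_3^2=-e_1e_2e_3$. Since we have fixed positive conformal helicity, $k_3>0$ forces $c_3=+\sqrt{-e_1e_2e_3}$ (note that $-e_1e_2e_3>0$ because $e_1<0<e_2<e_3$), and substituting these into (\ref{eqmrk4}) yields (\ref{k13}) at once. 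Thus the entire content of the Proposition lies in establishing the closed form (\ref{k21}) for $k_2$.

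First I would reduce the first-order equation (\ref{eee1}) to an autonomous elliptic ODE. Setting $w=k_2^2$, so that $k_2\dot{k}_2=\tfrac{1}{2}\dot{w}$, equation (\ref{eee1}) becomes
\[
\dot{w}^2=4(w-e_1)(w-e_2)(e_3-w).
\]
Because the left-hand side is nonnegative and $w>0>e_1$, the cubic on the right forces $w$ to lie in $[e_2,e_3]$; moreover $\dot{w}$ vanishes exactly at the endpoints, so $w$ oscillates monotonically between $e_2$ and $e_3$. This is the normal form whose inversion produces a Jacobi elliptic function.

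The heart of the argument is the reduction of this equation to Jacobi's normal form. Separating variables gives $du=\pm\frac{dw}{2\sqrt{(w-e_1)(w-e_2)(e_3-w)}}$, and the fractional-linear substitution adapted to the interval $[e_2,e_3]$ is
\[
t=\frac{(e_3-e_1)(w-e_2)}{(e_3-e_2)(w-e_1)}=\frac{\ell_3(w-e_2)}{\ell_4(w-e_1)},
\]
which maps $[e_2,e_3]$ monotonically onto $[0,1]$. Computing $w-e_1$, $w-e_2$, $e_3-w$ and $dw$ in terms of $t$ transforms the integral into $\int\frac{dt}{2\sqrt{t(1-t)(1-mt)}}$, up to the factor $\sqrt{\ell_3}$, with parameter $m=\ell_4/\ell_3$ exactly as in (\ref{ell}). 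Inverting this elliptic integral gives $t=\mathrm{sn}^2(\sqrt{\ell_3}\,u+u_0,m)$, and solving the fractional-linear relation for $w=k_2^2$ (using $\ell_1=e_2\ell_3$ and $\ell_2=e_1\ell_4$) returns precisely $w=\frac{\ell_1-\ell_2\,\mathrm{sn}^2}{\ell_3-\ell_4\,\mathrm{sn}^2}$, i.e.\ (\ref{k21}); the additive constant $u_0$ records the choice of origin and the sign ambiguity is absorbed by the evenness and periodicity of $\mathrm{sn}^2$.

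I expect the bookkeeping of the elliptic reduction to be the only delicate point, in particular matching the frequency $\sqrt{\ell_3}$ and the parameter $m=\ell_4/\ell_3$. The cleanest way to close the argument rigorously is to bypass the indefinite integration and instead \emph{verify} the candidate (\ref{k21}) directly: defining $w(u)$ by the right-hand side of (\ref{k21}) with $\xi=\sqrt{\ell_3}\,u+u_0$ and $t=\mathrm{sn}^2(\xi,m)$, one finds that $\ell_1=e_2\ell_3$ and $\ell_2=e_1\ell_4$ make one term cancel in each of $w-e_1$, $w-e_2$, $e_3-w$, and one computes $\dot{w}$ using $\frac{d}{d\xi}\mathrm{sn}(\xi,m)=\mathrm{cn}(\xi,m)\,\mathrm{dn}(\xi,m)$ together with $\mathrm{cn}^2=1-\mathrm{sn}^2$ and $\mathrm{dn}^2=1-m\,\mathrm{sn}^2$. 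Both $\dot{w}^2$ and $4(w-e_1)(w-e_2)(e_3-w)$ then reduce to the common expression $4\ell_3^2\ell_4^2(e_2-e_1)^2\,t(1-t)/(\ell_3-\ell_4 t)^3$, so the identity holds, and uniqueness for the first-order ODE (\ref{eee1}) with the given initial datum forces $k_2^2=w$. Substituting the resulting $k_2$ into (\ref{eqmrk4})--(\ref{eee2}) as in the first paragraph completes the proof.
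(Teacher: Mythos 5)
Your proposal is correct and follows essentially the same route as the paper: the relations (\ref{k13}) are read off from (\ref{EM3}) and (\ref{eee2}) (with the positive-helicity convention fixing the sign of $c_3$), and (\ref{k21}) is obtained by recognizing $w=k_2^2$ as a solution of $\dot{w}^2=4(w-e_1)(w-e_2)(e_3-w)$. The only difference is that the paper simply cites Byrd--Friedman for the general solution of this cubic ODE, whereas you carry out the fractional-linear reduction to Jacobi's normal form (and the direct verification) explicitly; your computations check out.
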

\begin{proof}{Put $f(u)=k_2(u/\sqrt{\ell_3})$ then, using (\ref{eee1}), we get
$$\dot{f}^2=-\frac{4}{e_3-e_1}(f-e_1)(f-e_2)(f-e_3).$$
If $e_1<0<e_2<e_3$, the general solution of the equation above (cfr. \cite{BF} pag. $77$) is
$$f(u)=\frac{\ell_1-\ell_2\mathrm{sn}^2(u+u_0,m)}{\ell_3-\ell_4\mathrm{sn}^2(u+u_0,m)},$$
where $u_0$ is a constant. This implies (\ref{k21}). We conclude the proof by observing that (\ref{k13}) is an immediate consequence of (\ref{EM3}) and (\ref{eee2}).
}\end{proof}

\noindent This proposition has two consequences : $k_2$ is a strictly positive, even periodic function with period
\begin{equation}\label{period}\omega=2K(m)/\sqrt{\ell_3}\end{equation}
and the parameterizations by the conformal parameter of a world-line are defined on the whole real line. In addition, with a shift of the independent variable the constant $u_0$ in (\ref{k21}) can be put equal to zero. This means that a conformal world line admits a {\it canonical parametrization} by conformal parameter such that $k_2(0)=\sqrt{\ell_1/\ell_3}$.

\begin{defn}{We say that $\gamma$ is the {\it standard configuration} of a world-line if $\gamma$ is parameterized by the canonical parameter and if, in addition, its canonical frame satisfies the initial condition $B|_0=\mathrm{Id}|_{6\times 6}$. Clearly, each world-line is conformally equivalent to a unique standard configuration.}\end{defn}

\noindent To summarize what has been said, we state the following Proposition :

\begin{prop}\label{PHRS}{The standard configurations of world-lines are in one-to-one correspondence with the point of the domain
$$\mathcal{F}=\{\mathbf{e}=(e_1,e_2,e_3)\in \R^3 : e_1<0<e_2<e_3\}\subset \R^3.$$
In other words, for each $\mathbf{e}=(e_1,e_2,e_3)\in \mathcal{F}$ there exist a unique world-line $\gamma$ in its standard configuration with phase parameters $(e_1,e_3,e_3)$.}\end{prop}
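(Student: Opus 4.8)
The plan is to assemble the facts already established in this section, since every ingredient is in place. The key preliminary observation is that the passage from phase parameters to characters is a bijection. Given $\mathbf{e}=(e_1,e_2,e_3)\in\mathcal{F}$, the formulas (\ref{eee2}) assign characters $c_1,c_2,c_3$, and since $e_1<0<e_2<e_3$ we have $c_3^2=-e_1e_2e_3>0$, so $c_3\neq 0$; conversely the Lemma on the cubic $Q_1(t)=t^3+2c_1t^2+c_2t+c_3^2$ shows that an admissible triple of characters produces, as its roots, exactly one ordered triple $e_1<0<e_2<e_3$. Thus $\mathcal{F}$ is in bijection with the set of characters arising from world-lines of non-constant curvature.

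\emph{Existence.} Fix $\mathbf{e}\in\mathcal{F}$ and form the associated characters via (\ref{eee2}). I then define the curvature functions $k_1,k_2,k_3$ by the explicit elliptic formulas (\ref{k21}) and (\ref{k13}) of Proposition \ref{ConfCurv}, taking the canonical normalization $u_0=0$; since $e_1<e_2<e_3$ are distinct, $\mathrm{sn}^2$ is non-constant and so is $k_2$. By construction these functions satisfy the structure equations (\ref{eqmrk4}). Invoking Remark \ref{remarkuniq}, I solve the linear system (\ref{mcc}) with initial condition $B|_0=\mathrm{Id}$, obtaining a map $B:\R\to\mathrm{A}^{\uparrow}_+(2,4)$ and a linearly full conformal world-line $\gamma=[B_0]$ with curvatures $k_1,k_2,k_3$. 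The choice $u_0=0$ gives the canonical parametrization (so $k_2(0)=\sqrt{\ell_1/\ell_3}$), while $B|_0=\mathrm{Id}$ is precisely the standard-configuration normalization; its phase parameters are the prescribed $\mathbf{e}$ because the characters recover them through (\ref{eee2}).

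\emph{Uniqueness.} Suppose $\gamma$ is any world-line in standard configuration with phase parameters $\mathbf{e}$. Its characters are forced by the bijection above, and Proposition \ref{ConfCurv} together with the canonical parametrization ($u_0=0$, $k_2(0)=\sqrt{\ell_1/\ell_3}$) forces its curvatures to be the very functions written down for existence. The canonical conformal frame therefore solves the same linear initial value problem $\dot{B}=B\cdot\mathcal{K}$ with $B|_0=\mathrm{Id}$, and uniqueness of solutions of a linear ODE pins down $B$, hence $\gamma=[B_0]$. The two maps $\mathbf{e}\mapsto\gamma$ and $\gamma\mapsto\mathbf{e}$ are thus mutually inverse.

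I do not expect a genuine obstacle, as the proposition is essentially a bookkeeping statement consolidating Proposition \ref{propuniq}, Remark \ref{remarkuniq}, the Lemma, and Proposition \ref{ConfCurv}. The one point requiring care is verifying that the canonical parametrization truly removes the translation ambiguity in $u_0$, and that the residual evenness of $k_2$ inherited from $\mathrm{sn}^2$ does not yield a second standard configuration; this is controlled by imposing simultaneously $k_2(0)=\sqrt{\ell_1/\ell_3}$ and $B|_0=\mathrm{Id}$, which rigidifies both the origin of the parameter and the frame.
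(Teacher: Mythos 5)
Your proposal is correct and follows essentially the same route as the paper, which offers no separate argument but states the proposition explicitly as a summary of Proposition \ref{propuniq}, Remark \ref{remarkuniq}, the Lemma on the cubic $Q_1$, Proposition \ref{ConfCurv}, and the normalization $u_0=0$, $B|_0=\mathrm{Id}$. Your write-up simply makes explicit the existence/uniqueness bookkeeping (bijection between phase parameters and characters, solvability of the linear system (\ref{mcc}), and rigidity from the canonical parametrization plus the initial frame condition) that the paper leaves implicit.
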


\subsection{The momentum operator} Let $\gamma$ be a standard configuration of a conformal world-line with curvatures $k_1,k_2,k_3$. We denote by $\mathcal{H}: Y\to \mathfrak{a}(2,4)$ the map
\begin{equation}\label{obs}\mathcal{H}=
\left(
\begin{array}{cccccc}
0 & -1 & -(k_2^2-k_1) & \dot{k}_2 & k_2k_3 & 0 \\
0 & 0 & 0 & -k_2 & 0 & 1 \\
-1 & 0 & 0 & 0 & 0 & -(k_2^2-k_1) \\
0 & -k_2 & 0 & 0 & 0 & \dot{k}_2 \\
0 & 0 & 0 & 0 & 0 & k_2k_3 \\
0 & 0 & -1 & 0 & 0 & 0 \\
\end{array}
\right).\end{equation}
Then, (\ref{k21}) and (\ref{k13}) imply
\begin{equation}\label{lax} \dot{\mathcal{H}}=[\mathcal{H},\mathcal{K}].\end{equation}
This equation together with $B'=B\cdot \mathcal{K}$ implies
\begin{equation}\label{momentum}B|_u\cdot \mathcal{H}|_u\cdot B^{-1}|_u = \mathcal{H}|_0,\quad \forall u\in \R. \end{equation}

\begin{defn}{We put $\mathfrak{m}=\mathcal{H}|_0$ and we say that $\mathfrak{m}$ is the  {\it momentum operator} of $\gamma$}\end{defn}

\begin{remark}\label{ThEx}{We give a brief explanation of the conceptual origin of the momentum operator. The first step is the construction of the {\it momentum space} and of the {\it Euler-Lagrange exterior differential system} \cite{Gr, GM}. In our specific situation, the momentum space is the $19$-dimensional manifold $Z=A^{\uparrow}_+(2,4)\times \mathfrak{K}$, where $$\mathfrak{K}=\{\mathfrak{k}=(\mathfrak{k}_1,\mathfrak{k}_2,\mathfrak{k}_3,\dot{\mathfrak{k}}_2)\in \R^4, \mathfrak{k}_2>0, \mathfrak{k}_3>0\}\subset \R^4.$$
The restricted conformal group acts freely on the left of $Z$ by
$$L_{Y}(X,\mathfrak{k})= (Y\cdot X,\mathfrak{k}),\quad \forall X,Y\in A^{\uparrow}_+(2,4), \forall \mathfrak{k}\in \mathfrak{K}.$$
The Euler-Lagrange differential system is the $A^{\uparrow}_+(2,4)$-invariant Pfaffian differential ideal $\mathcal{I}\subset \Omega^*(Z)$ generated by the $1$-forms
$$\mu^2_0,\quad \mu^3_0,\quad \mu^4_0,\quad \mu^2_1,\quad \mu^3_1,\quad \mu^4_1,\quad \mu^3_5,\quad \mu^4_5,\quad \mu^4_2,\quad \mu^0_0
$$
$$\mu^2_5-\mu^1_0,\quad \mu^1_5-\mathfrak{k}_1\mu^1_0,\quad \mu^3_2-\mathfrak{k}_2\mu^1_0,\quad
\mu^4_3-\mathfrak{k}_3\mu^1_0$$
and
$$d\dot{\mathfrak{k}}_2-\mathfrak{k}_2(\mathfrak{k}_3^2+\mathfrak{k}_2^2-2\mathfrak{k}_1)\mu^1,\quad
d\mathfrak{k}_2-\dot{\mathfrak{k}}_2\mu^1_0,\quad d\mathfrak{k}_1-3\mathfrak{k}_2\dot{k}_2\mu^1_0,\quad
\mathfrak{k}_2d\mathfrak{k}_3+2\mathfrak{k}_2\dot{\mathfrak{k}}_2\mu^1_0.
$$
The independence condition of the system is the invariant $1$-form $\mu^1_0$. The integral curves of $(\mathcal{I},\mu^1_0)$ can be build as follows : let $\gamma$ be
a conformal world-line\footnote{Remember that $\gamma$ is linearly full, with positive helicity and parameterized by the conformal parameter}  with curvatures $k_1,k_2,k_3$ and canonical frame $B$, the lifting $\mathfrak{b}=(B,k_1,k_2,k_3,\dot{k}_2):\R\to Z$ of $\gamma$ to $Z$ is said the {\it extended frame} along $\gamma$. Proposition \ref{propuniq} tell us that the integral curves of $(\mathcal{I},\mu^1_0)$ are the extended frames of the world-lines. Using the Maurer-Cartan equations (\ref{MC}) we see that
\begin{equation}\label{Liouville}\zeta = \frac{1}{2}(\mu^1_0+\mu^2_5+(\mathfrak{k}_2^2-\mathfrak{k}_1)\mu^2_0-\dot{\mathfrak{k}}_2\mu^3_0-
\mathfrak{k}_2\mathfrak{k}_3\mu^4_0+\mathfrak{k}_2\mu^3_1)
\end{equation}
is an invariant contact $1$-form such that the integral curves of its characteristic vector field\footnote{ie the vector field defined by $\zeta(X_{\zeta})=1$, $\iota_{X_{\zeta}}d\zeta=0$.} $X_{\zeta}$ are the extended frames of the world-lines. We can think of $\zeta$ as a map into the dual space $\mathfrak{a}(2,4)^*$ of the conformal Lie-algebra. Denote by $\natural:\mathfrak{a}^*(4,2)\to \mathfrak{a}(2,4)$ the pairing defined by the Killing form, then $\zeta^{\natural}=\natural \circ \zeta : Z\to \mathfrak{a}(2,4)$ is an equivariant map that, in our context, plays the role of the Legendre transformation. If $\mathfrak{b}$ is the extended frame of $\gamma$, then $\mathcal{H}$ coincides with $\zeta^{\natural}\circ \mathfrak{b}$. The momentum map of action of $A^{\uparrow}_+(2,3)$ on $(Z,\zeta)$ is given by
$$\widehat{\mathfrak{m}}:(X,\mathfrak{k})\in Z\to ad^*_X(\zeta|_{(B,\mathfrak{k})})\in \mathfrak{a}^*(2,4).$$
The {\it N\"other conservations theorem} for a Hamiltonian action on a contact manifold says that $\widehat{\mathfrak{m}}$ is constant along the characteristic curves. Thus, if $\mathfrak{b}$ is the extended frame of a world-line $\gamma$ then $\widehat{\mathfrak{m}}_{\gamma}=\widehat{\mathfrak{m}}\circ \mathfrak{b}$ is constant and  $\widehat{\mathfrak{m}}_{\gamma}^{\natural}$ is the momentum operator of $\gamma$. This explains the geometrical origin of the momentum operator of a world-line.}\end{remark}

\noindent Hereafter we will adopt the following notations:
\begin{itemize}
\item $P_{\mathfrak{m}}$ is the characteristic polynomial of $\mathfrak{m}$;
\item $\mathcal{S}_{\mathfrak{m}}$ is the spectrum of $\mathfrak{m}$, viewed as an endomorphism of $\C^6$;
\item for each $\lambda\in \mathcal{S}_{\mathfrak{m}}$, $n_1(\lambda)$ is the multiplicity of $\lambda$ as root of  $P_\mathfrak{m}$ and $n_2(\lambda)$ is the complex dimension of the $\mathfrak{m}$-eigenspace $\mathbb{V}_{\lambda}$ of $\lambda$.
\end{itemize}

\begin{defn}{A conformal world-line is said to be {\it regular, exceptional or singular} depending on whether $\mathfrak{m}$ is a regular, exceptional or singular element of the Lie algebra $\mathfrak{a}(2,4)$. In other words, $\gamma$ is regular if $\mathcal{S}_{\mathfrak{m}}$ consists of six elements, is exceptional if $\mathcal{S}_{\mathfrak{m}}$ has less that six elements and $n_2(\lambda)=1$, for every $\lambda \in \mathcal{S}_{\mathfrak{m}}$ and is singular if $n_2(\lambda)>1$, for some $\lambda$.}\end{defn}

\noindent  From (\ref{momentum}) it follows that $P_\mathfrak{m}$ coincides with the characteristic polynomial
of $\mathcal{H}|_u$, for every $u\in \R$. Keeping in mind (\ref{eqmrk4}) we get
\begin{equation}\label{chpolm}P_\mathfrak{m}(t)=t^6+2c_1t^4+(c_2+1)t^2+c_3^2 = Q_1(t^2)+t^2.\end{equation}
Let $Q_2(t)$ be the third-order polynomial $Q_1(t)+t^2$. Since the roots of $Q_1(t)$ are the phase parameters $e_1<0<e_2<e_3$, we infer that three possibilities may occur :
\begin{itemize}
\item $Q_2$ has three distinct real roots $\rho_1,\rho_2,\rho_3$ such that
$e_1<\rho_1<0<e_2<\rho_2<\rho_3<e_3$.
\item $Q_2$ has one negative real root $\rho_1$ with $e_1<\rho_1$ and two complex conjugate roots $\rho_2=\mu+i\nu$, $\nu>0$ and $\rho_3=\mu-i\nu$.
\item $Q_2$ has one simple real root $\rho_1$ such that $e_1<\rho_1<0$ and a double real root $\rho_2=\rho_3$, $e_2<\rho<e_3$.
\end{itemize}
We put\footnote{If $z\in \C-\R^+$, then $\sqrt{z}$ is the determination of the square root with positive imaginary part.}
\begin{equation}\label{eigenvl}\begin{split}
\lambda_0 &=i\sqrt{|\rho_1|},\quad \lambda_1=-i\sqrt{|\rho_1|},\quad \lambda _2=\sqrt{\rho_2},\\ \lambda_3&=-\sqrt{\rho_2},\quad \lambda_4=\sqrt{\rho_3},\quad \lambda_5=-\sqrt{\rho_3}.
\end{split}\end{equation}
In the first two cases the eigenvalues of $P_\mathfrak{m}$ are simple and
\begin{equation}\label{sptr1}\mathcal{S}_{\mathfrak{m}}=\{\lambda_0,\dots, \lambda_5\}.\end{equation}
While, in the third case we have
\begin{equation}\label{sptr1}\mathcal{S}_{\mathfrak{m}}=\{\lambda_0,\dots, \lambda_3\}\end{equation}
and the two real eigenvalues $\lambda_2$ and $\lambda_3$ are the double roots of $P_{\mathfrak{m}}$.


\begin{prop}\label{CT}{A conformal world-line is either regular or exceptional.}\end{prop}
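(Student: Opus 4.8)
The statement is equivalent to asserting that $\mathfrak{m}$ is \emph{non-derogatory}: by the definitions preceding the Proposition, $\gamma$ is singular exactly when $n_2(\lambda)>1$ for some $\lambda\in\mathcal{S}_{\mathfrak{m}}$, so "regular or exceptional" means precisely $n_2(\lambda)=1$ for every eigenvalue, i.e. that the minimal polynomial of $\mathfrak{m}$ equals its characteristic polynomial $P_{\mathfrak{m}}$. For a $6\times 6$ matrix this is in turn equivalent to the existence of a \emph{cyclic vector} $w$, one for which $w,\mathfrak{m}w,\dots,\mathfrak{m}^5w$ are linearly independent. My plan is therefore to exhibit such a vector for each world-line.

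The device is the conjugation (\ref{momentum}), $\mathfrak{m}=B|_u\,\mathcal{H}|_u\,B^{-1}|_u$, valid for every $u$. For the candidate $w=B_0|_u=B|_u E_0$, with $E_0,\dots,E_5$ the standard basis of $\R^{2,4}$, one has $\mathfrak{m}^j w=B|_u\,\mathcal{H}^j|_u\,E_0$, so, since $\det B|_u=1$,
\begin{equation*}
\det\big(w,\mathfrak{m}w,\dots,\mathfrak{m}^5w\big)=\det\big(E_0,\mathcal{H}|_u E_0,\dots,\mathcal{H}^5|_u E_0\big).
\end{equation*}
It thus suffices to show that this determinant is nonzero for \emph{some} parameter value; note that varying $u$ amounts to testing different candidate cyclic vectors $B_0|_u$ against the one fixed operator $\mathfrak{m}$.

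The remaining, and main, task is to evaluate the right-hand side from the explicit columns of $\mathcal{H}$ in (\ref{obs}). Setting $a=k_2^2-k_1$, $b=\dot{k}_2$, $c=k_2k_3$, one generates $\mathcal{H}^jE_0$ recursively; the successive vectors first fill the "tangential" directions $E_0,E_1,E_2,E_5$ and then the two normal directions $E_3,E_4$, with the entry $c=k_2k_3$ being what feeds $E_4$ at the last step. After reordering the basis the matrix becomes block triangular, and I expect its determinant to equal, up to sign,
\begin{equation*}
k_2^{4}\,k_3\,\big(\dot{k}_2^{2}-1\big).
\end{equation*}
Granting this, the conclusion is immediate: by Proposition \ref{propuniq} we have $k_2>0$ and $k_3\neq0$, while by Proposition \ref{ConfCurv} the function $k_2$ is non-constant and periodic, hence attains a critical point $u_\ast$ where $\dot{k}_2(u_\ast)=0$; there $\dot{k}_2^{2}-1=-1$, and the determinant reduces to a nonzero multiple of $k_2^{4}k_3|_{u_\ast}\neq0$. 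Thus $B_0|_{u_\ast}$ is cyclic for $\mathfrak{m}$, so $\mathfrak{m}$ is non-derogatory and $\gamma$ is regular or exceptional. The one genuine obstacle is the recursive determinant computation and, in particular, confirming that the factor $k_2k_3$ is exactly what controls whether $E_3$ and $E_4$ are reached by the Krylov sequence; everything else is bookkeeping and the elementary remark that a non-constant periodic $k_2$ must have a critical point.
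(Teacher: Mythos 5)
Your argument is correct, and it takes a genuinely different route from the paper's. You reduce the claim to the existence of a cyclic vector for $\mathfrak{m}$ (equivalently, that $\mathfrak{m}$ is non-derogatory) and test the candidate $B_0|_u$ via the Krylov determinant; the one step you left as ``expected'' does check out. Writing $a=k_2^2-k_1$, $b=\dot{k}_2$, $c=k_2k_3$, one finds from (\ref{obs}) that $\mathcal{H}E_0=-E_2$, $\mathcal{H}^2E_0=aE_0+E_5$, $\mathcal{H}^3E_0=E_1-2aE_2+bE_3+cE_4$, and after expanding the $6\times 6$ Krylov determinant along the columns carrying $E_0$, $E_2$, $E_5$ one is left with a $3\times 3$ block whose determinant is $ck_2^3(1-b^2)$, so that
\begin{equation*}
\det\big(E_0,\mathcal{H}E_0,\dots,\mathcal{H}^5E_0\big)=k_2^{4}k_3\big(\dot{k}_2^{2}-1\big),
\end{equation*}
exactly your formula; at a critical point $u_\ast$ of the non-constant periodic function $k_2$ this equals $-k_2^4k_3|_{u_\ast}\neq 0$ by Proposition \ref{propuniq}, so $B_0|_{u_\ast}$ is cyclic. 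The paper argues differently: for each $\lambda\in\mathcal{S}_{\mathfrak{m}}$ it exhibits an explicit curve of eigenvectors $L_{\lambda}(u)$ of $\mathcal{H}|_u$ as in (\ref{L}), checks that $L_{\lambda}|_u$ spans the $\lambda$-eigenspace away from a discrete zero set $D_{\lambda}$, and concludes $n_2(\lambda)=1$ because $\mathfrak{m}$ and $\mathcal{H}|_u$ lie on the same adjoint orbit. Your route is shorter and more self-contained for this single proposition --- one determinant at one well-chosen point, no case distinction between real and non-real eigenvalues, and no discussion of the zero sets $D_{\lambda}$ --- whereas the paper's computation is not wasted effort: the eigenvector fields $L_{\lambda}$ are precisely the objects reused afterwards to build the integrating factors and principal vectors and to integrate the world-lines by quadratures, so the proposition there comes essentially for free from data the authors need anyway.
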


\begin{proof}{For every $\lambda\in \mathcal{S}_{\mathfrak{m}}$ we define
$L_{\lambda}=(L_{\lambda}^0,\dots L_{\lambda}^5):\R\to \C^6$
by
\begin{equation}\label{L}\begin{cases}
L^0_{\lambda} =\lambda(\lambda^2-k_2^2)(\lambda^2+k_1-k_2^2),\\
L^1_{\lambda}=\lambda(\lambda-k_2\dot{k}_2),\\
L_{\lambda}^2=-\lambda^2(\lambda^2-k_2^2),\\
L^3_{\lambda}=\lambda(\lambda \dot{k}_2-k_2),\\
L^4_{\lambda}=k_2k_3(\lambda^2-k_2^2),\\
L_{\lambda}^5=\lambda(\lambda^2-k_2^2).
\end{cases}\end{equation}

\noindent The map $L_{\lambda}$ is real-analytic and periodic, with period $\omega$. Let $D_{\lambda}$ be its zero set. If $\lambda\notin \R$ then $D_{\lambda}=\emptyset$ and, if $\lambda\in \R$, we have
\begin{equation}\label{zeroes}
D_{\lambda}=\begin{cases} D^+_{\lambda}=\{n\omega+p_{\lambda}, n\in \mathbb{Z}\},\quad \lambda >0,\\
D^-_{\lambda}=\{n\omega-p_{\lambda}, n\in \mathbb{Z}\},\quad \lambda <0,
\end{cases}
\end{equation}
where $p_{\lambda}\in (0,\omega)$ is given by
\begin{equation}\label{palpha}
p_{\lambda}=\mathrm{sn}^{-1}\left(\frac{1}{\alpha},m\right),\quad \alpha=\sqrt{\frac{\ell_2-\lambda^2\ell_4}{\ell_1-\lambda^2\ell_3}}.
\end{equation}
The $\lambda$-eigenspace $\mathcal{V}_{\lambda}|_u$ of $\mathcal{H}|_u$
is spanned by $L_{\lambda}|_u$, for every  $u\notin D_{\lambda}$. Since $D_{\lambda}$ is a discrete set, this implies that $\mathrm{dim}(\mathcal{V}_j|_u)=1$, for every $u\in \R$. On the other hand, $\mathfrak{m}$ and $\mathcal{H}|_u$ belongs to the same adjoint orbit, for every $u$. Thus, also the eigenspaces of the momentum are $1$-dimensional. This yields the result.}\end{proof}

\begin{figure}[ht]
\begin{center}
\includegraphics[height=5cm,width=6cm]{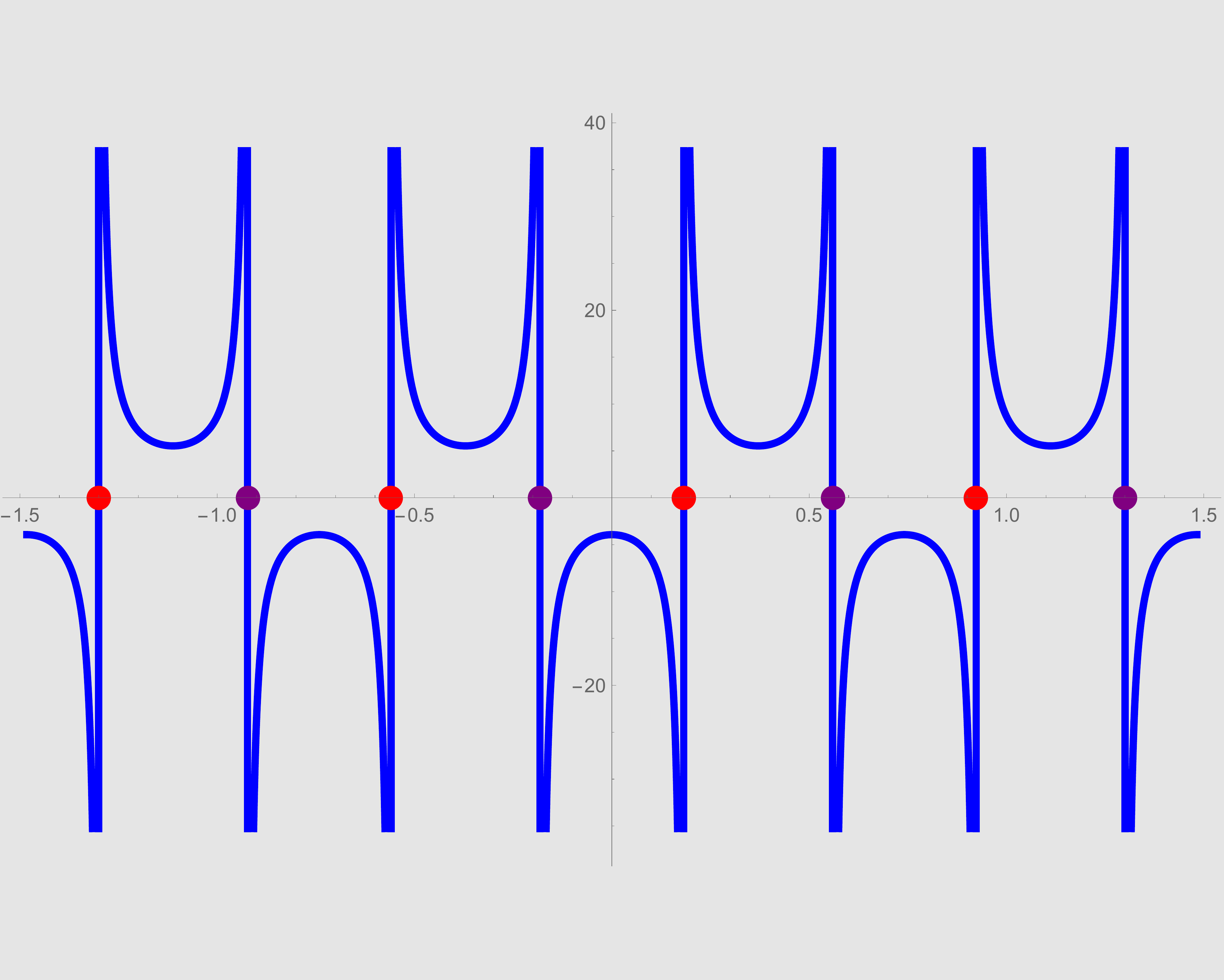}
\includegraphics[height=5cm,width=6cm]{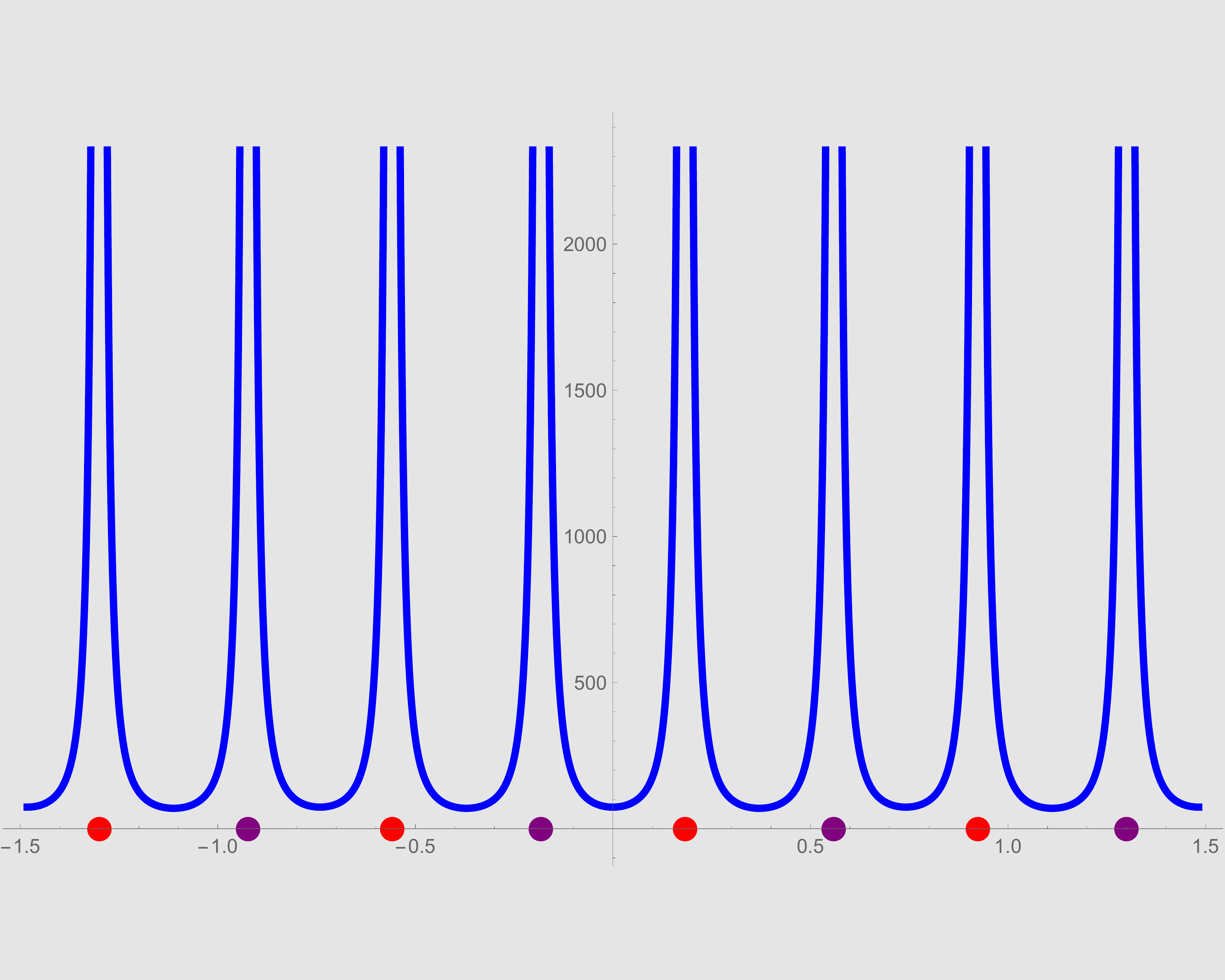}
\caption{The graphs of the functions $s_{\lambda}$ (on the left) and $r_{\lambda}$ (on the right), $\lambda\in \R$.}\label{FIG1}
\end{center}
\end{figure}

\subsection{Integrating factors and principal vectors} Let $\gamma$ be the standard configuration of a linearly full conformal string with non-constant curvatures. Denote by $e_1<0<e_2<e_3$ its phase parameters and by $B:\R\to A^{\uparrow}_+(2,4)$ its canonical frame. For each $\lambda\in \mathcal{S}_{\mathfrak{m}}$ we consider the functions
\begin{equation}\label{if2}r_{\lambda}=\frac{k_2\dot{k}_2+\lambda}{k_2^2-\lambda^2},\quad
s_{\lambda}=\frac{\lambda^2+k_2^2-2\lambda k_2\dot{k}_2}{(\lambda^2-k_2^2)^2}.
\end{equation}
If $\lambda\notin \R$, the functions $r_{\lambda}$ and $s_{\lambda}$ are periodic, complex-valued and real-analytic; if $\lambda\in \R$, $r_{\lambda}$ and $s_{\lambda}$ are periodic, real-valued and real-analytic on the complement of the discrete set
    $\widetilde{D}_{\lambda}=D^+_{\lambda}\cup D^-_{\lambda}$; their absolute values tend to infinity when $u$ approaches one of the points of $\widetilde{D}_{\lambda}$ (see Figure \ref{FIG1}). For notational consistency, we put $\widetilde{D}_{\lambda}=\emptyset$ when $\lambda\notin \R$.

\begin{defn}{A primitive $\delta_{\lambda}:\R-\widetilde{D}_{\lambda}\to \C$ of $r_{\lambda}$ is said an {\it integrating factor of the first kind} for the eigenvalue $\lambda$ if
\begin{itemize}
\item  $\delta_{\lambda}|_0=0$;
\item $e^{-\delta_{\lambda}}L_{\lambda}:\R-\widetilde{D}_{\lambda}\to \C^6$ extends to a real-analytic map $\R\to \C^6$.
\end{itemize}}\end{defn}

\begin{figure}[ht]
\begin{center}
\includegraphics[height=5cm,width=6cm]{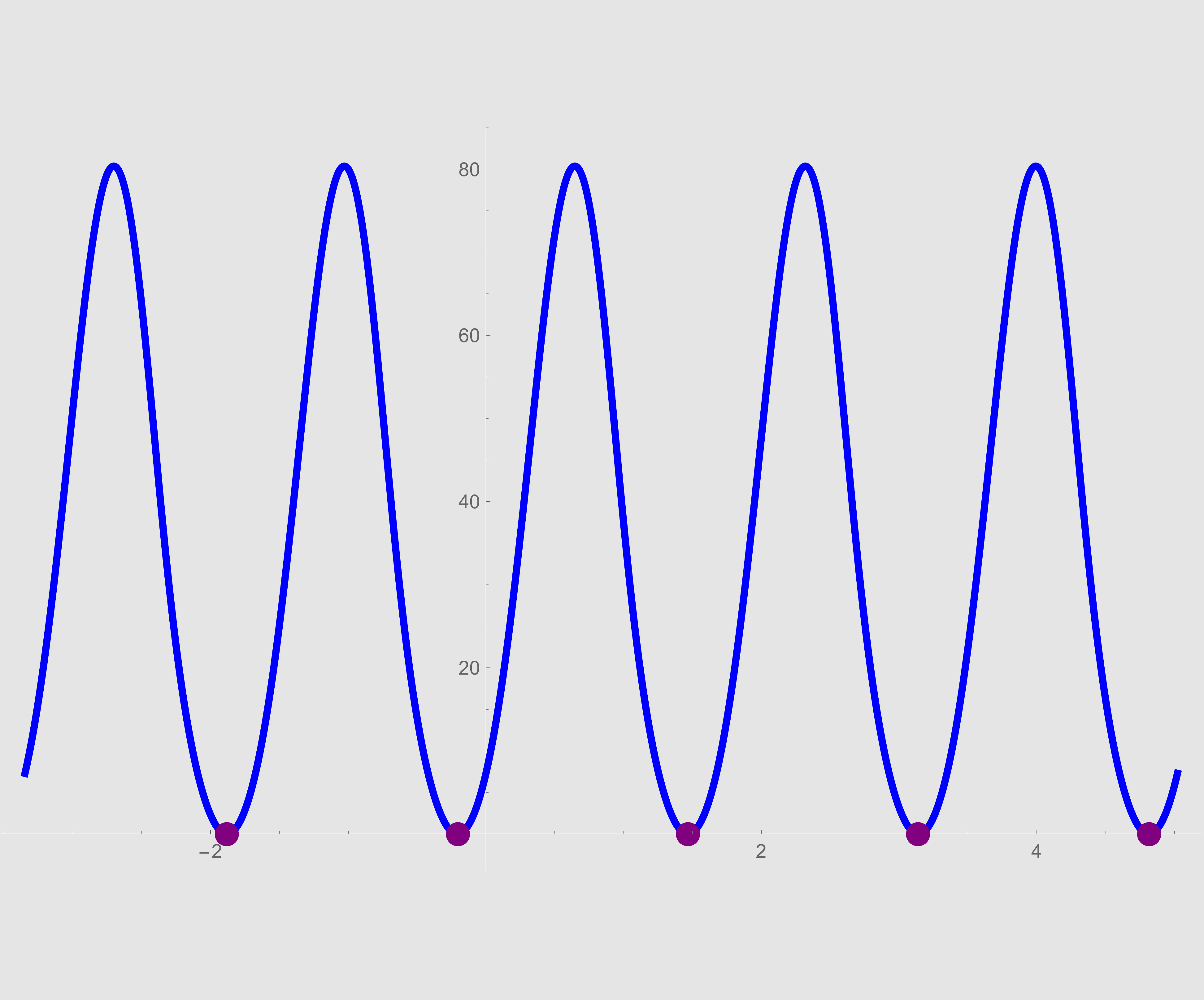}
\includegraphics[height=5cm,width=6cm]{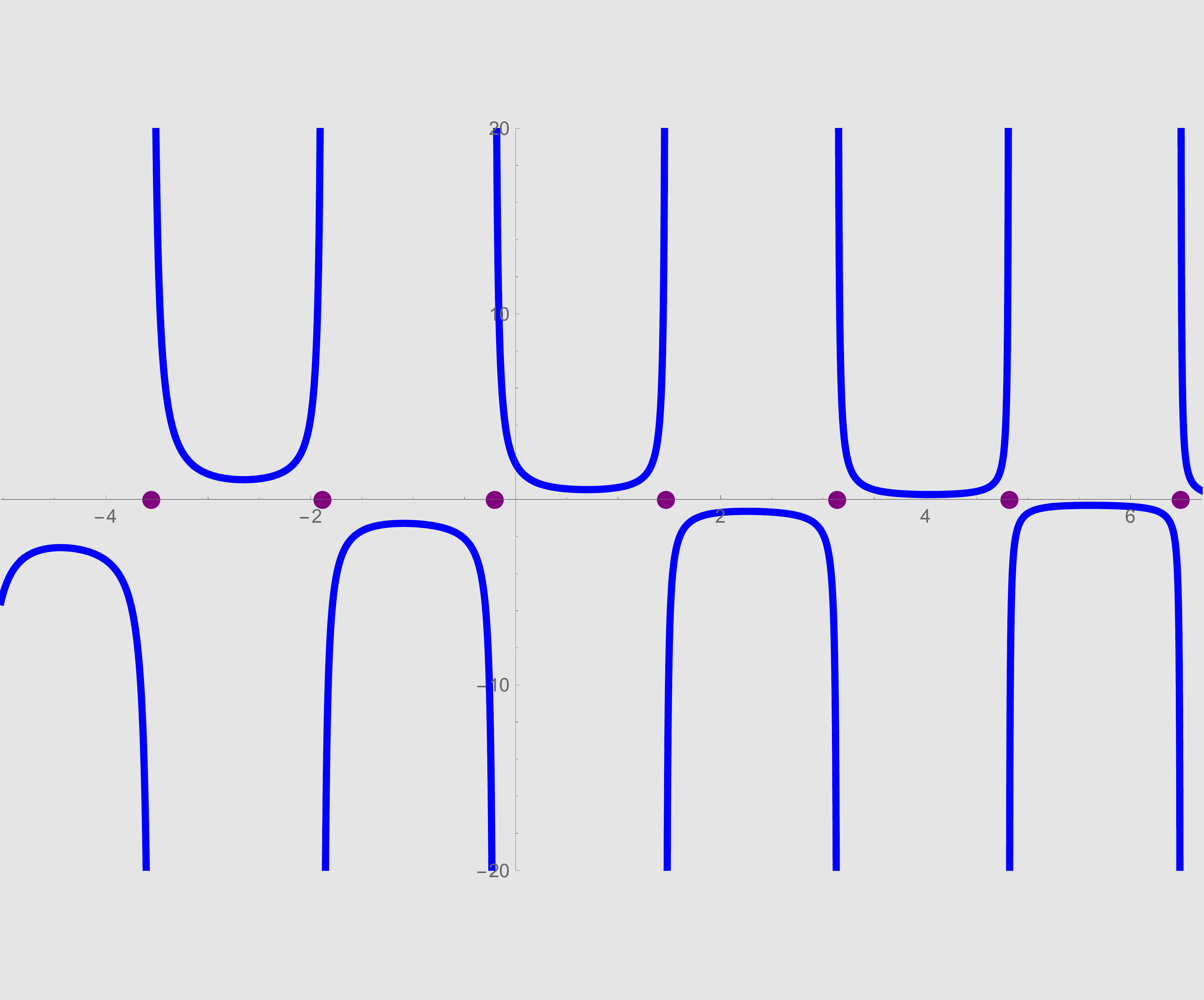}
\caption{The graphs of the functions $\|L_{\lambda}\|^2$ (on the left) and  $e^{-\delta_{\lambda}}$ (on the right), $\lambda\in \R$.}\label{FIG2}
\end{center}
\end{figure}

\begin{remark}{The integrating factors of the first kind are quasi-periodic functions, with quasi-period $2\omega$. If $\lambda\notin \R$, the function $\delta_{\lambda}$ is a regular, complex-valued function. If $\lambda\in \R$, the integrating factor $\delta_{\lambda}$ is real-analytic on the complement of the discrete set $D_{\lambda}$ and its imaginary part is locally constant. The function $e^{\delta_{\lambda}}$ is real-valued, with singularities at the points of $D_{\lambda}$ (see Figure \ref{FIG2}). The singularities of $e^{-\delta_{\lambda}}$ compensate the zeroes of the functions $L^j_{\lambda}$ so that the products $e^{-\delta_{\lambda}}L^j_{\lambda}$ are regular, real-analytic maps (see Figure \ref{FIG3}).
The evaluation of the integrating factors in terms of elliptic integrals and theta functions is analyzed in the appendix. The explicit expression of the integrating factor of the first kind for a non-real eigenvalue is given in (\ref{ifc}) while the integrating factor of a real eigenvalue can be found in (\ref{IIFR}).}\end{remark}

\begin{figure}[ht]
\begin{center}
\includegraphics[height=5cm,width=6cm]{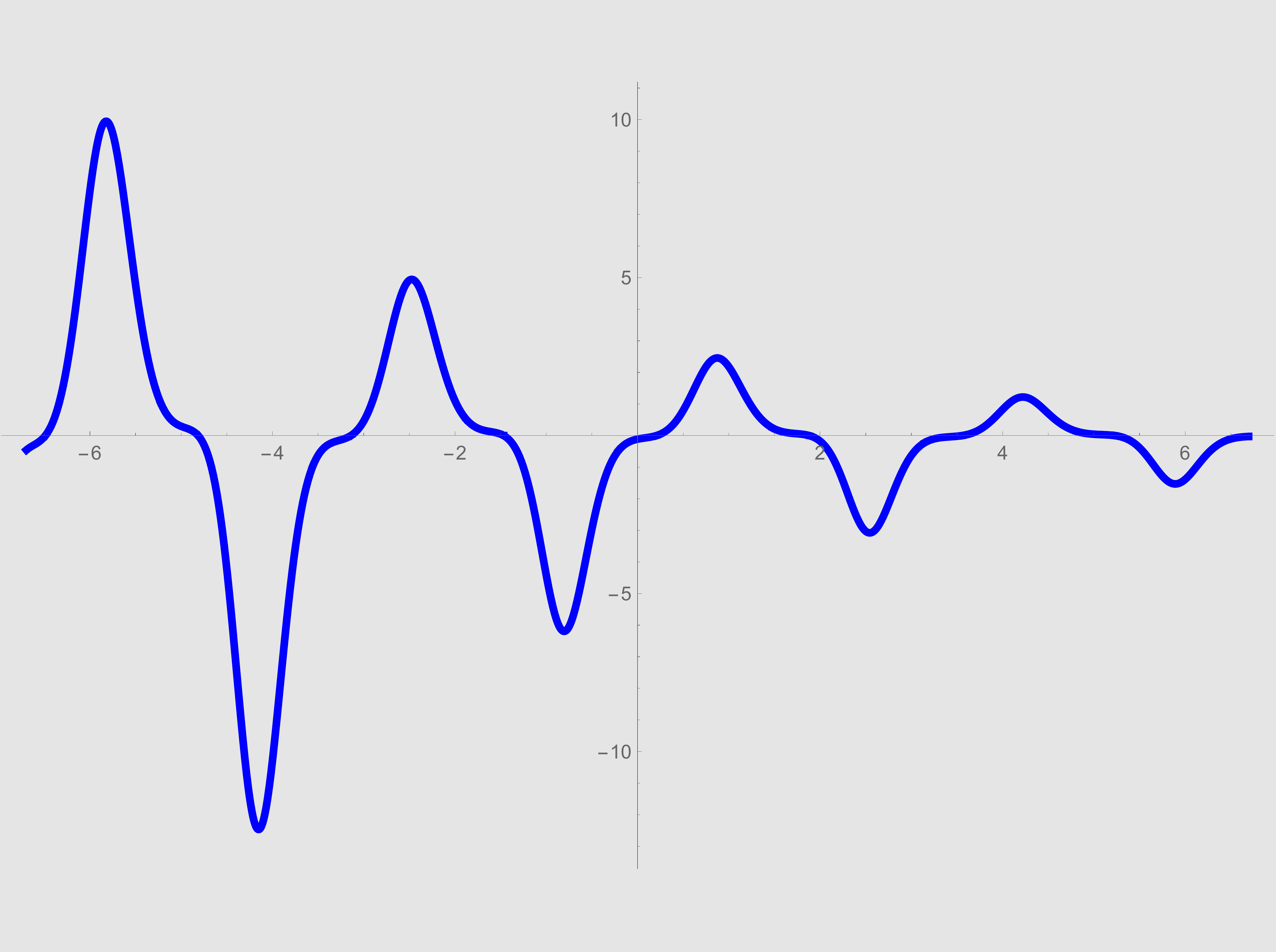}
\includegraphics[height=5cm,width=6cm]{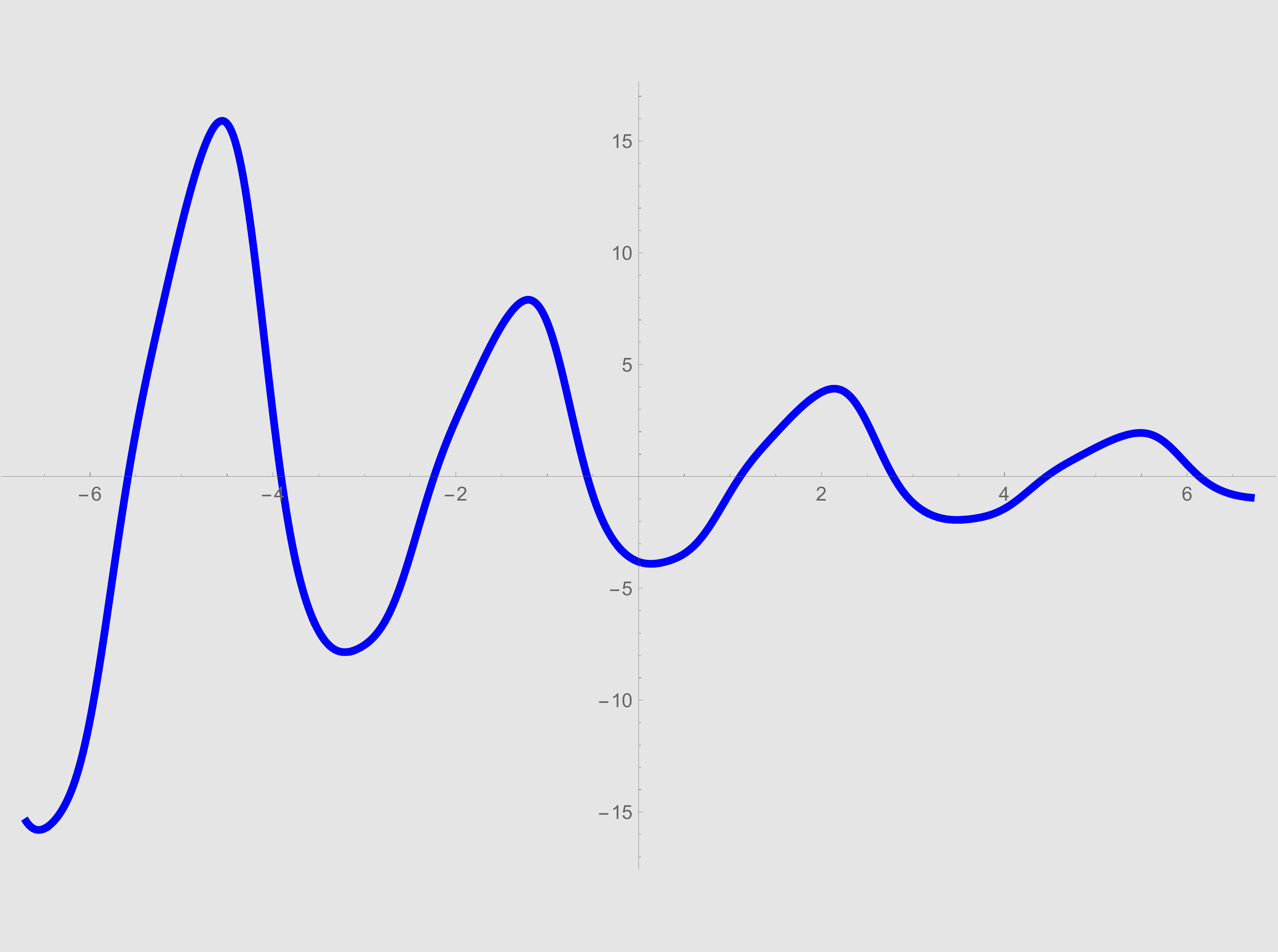}
\caption{The graphs of the functions $e^{-\delta_{\lambda}}L^0_{\lambda}$ (on the left) and $e^{-\delta_{\lambda}}L^3_{\lambda}$ (on the right), $\lambda\in \R$.}\label{FIG3}
\end{center}
\end{figure}

\noindent Let $\lambda$ be a multiple root of $P_\mathfrak{m}$. We set $$\widehat{D}_{\lambda}=\{-\mathrm{sign}(\lambda)p+n\omega : n\in \mathbb{Z}\}$$ and we define
$T_{\lambda}=(T_{\lambda}^0,\dots T_{\lambda}^5):\R-\widehat{D}_{\lambda}\to \R^6$ by
\begin{equation}\label{T}
\begin{cases}
T^0_{\lambda}= \frac{1}{2}(\lambda^2-k_2^2)(6\lambda^2+2c_1+k_2^2),\\ T^1_{\lambda}=\frac{k_2(k_2^2\dot{k}_2+\lambda^2\dot{k}_2-2\lambda k_2)}{\lambda^2-k_2^2},\\
T^2_{\lambda}=-2\lambda(\lambda^2-k_2^2),\\
T^3_{\lambda}=\frac{k_2(\lambda^2+k_2^2-2\lambda k_2 \dot{k}_2)}{\lambda^2-k_2^2},\\
T^4_{\lambda}=0,\\
T_{\lambda}^5=(\lambda^2-k_2^2).
\end{cases}
\end{equation}

\begin{remark}{The map $T_{\lambda}$ is periodic with period $\omega$, is real-analytic on the complement of $\widehat{D}_{\lambda}$ and tends to $\pm \infty$ when $u$ tends to a point of $\widehat{D}_{\lambda}$. It vanishes at the point of $D_{\lambda}$ (see Figure \ref{FIG4})}\end{remark}

\begin{figure}[ht]
\begin{center}
\includegraphics[height=5cm,width=6cm]{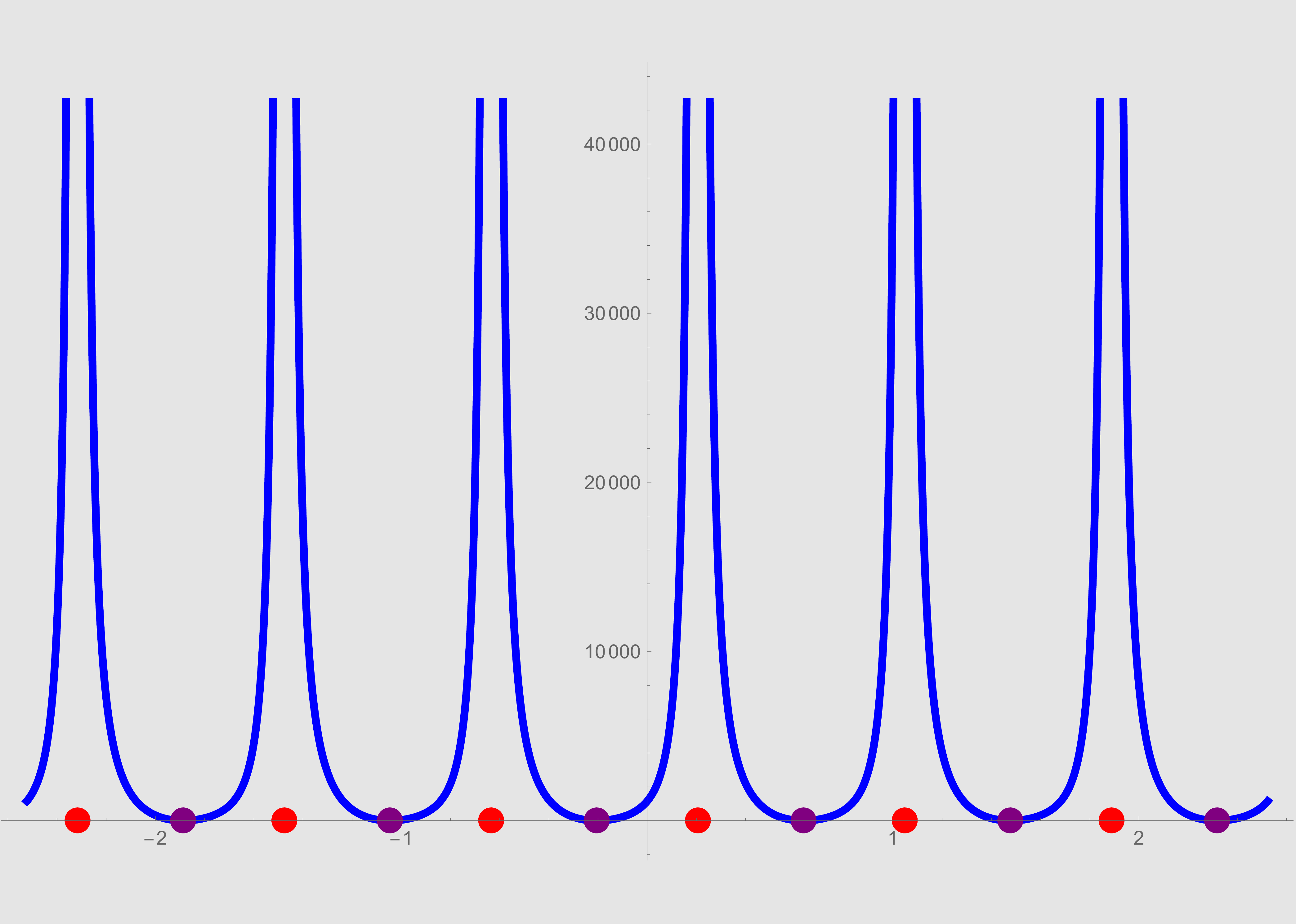}
\includegraphics[height=5cm,width=6cm]{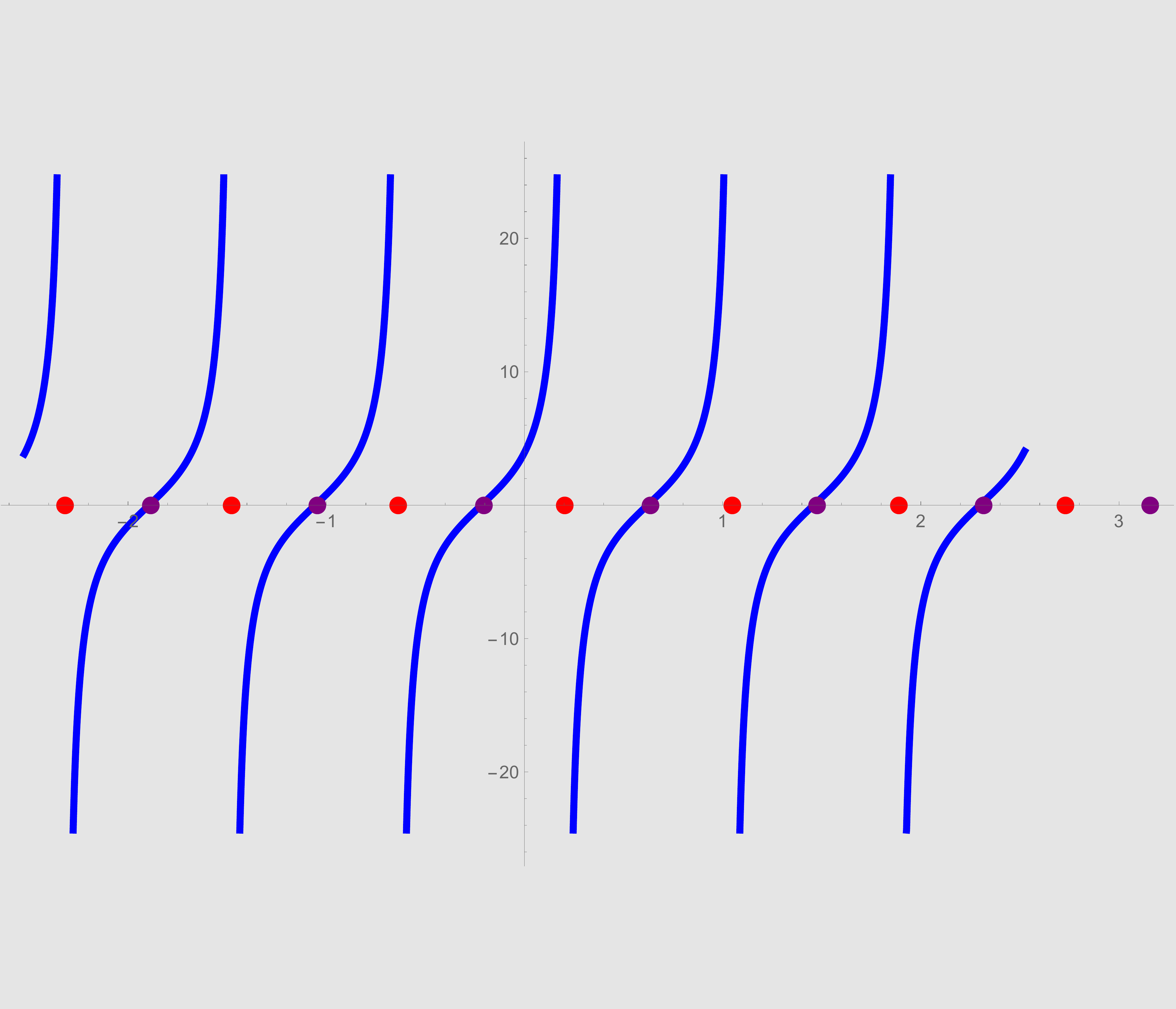}
\caption{The graphs of the functions $\|T_{\lambda}\|^2$ (on the left) and  $\eta_{\lambda}$ (on the right) when $\lambda\in \R$ is a multiple root of $P_\mathfrak{m}$.}\label{FIG4}
\end{center}
\end{figure}

\begin{defn}{A primitive $\eta_{\lambda}:\R-\widetilde{D}_{\lambda}\to \R$ of $s_{\lambda}$ is said an {\it integrating factor of the second kind} for the multiple eigenvalue $\lambda$ if
\begin{itemize}
\item $\eta_{\lambda}|_0=0$;
\item $e^{-\delta_{\lambda}}(T_{\lambda}-\eta_{\lambda}L_{\lambda}):\R-\widetilde{D}_{\lambda}\to \C^6$ extends to a real-analytic map $\R\to \C^6$.
\end{itemize}}\end{defn}

\begin{remark}{The integrating factor of the second kind vanishes at the points of $D_{\lambda}$ and tends to $\pm \infty$ when $u$ tends to a point of $\widehat{D}_{\lambda}$ (see Figure \ref{FIG4}). The functions $\eta_{\lambda}L_{\lambda}$ and $T_{\lambda}-\eta_{\lambda}L_{\lambda}$ behave in a similar way. Multiplying $T_{\lambda}-\eta_{\lambda}L_{\lambda}$ with $e^{-\delta_{\lambda}}$, the zeroes of one factor compensate the singularities of the other so that the product is a regular analytic function (see Figures \ref{FIG5}). The formula expressing the integrating factor of the second kind is given in (\ref{IFIIK}). Despite the apparent opacity, the formulas of the integrating factors can be easily made operative using standard programs of symbolic computation such as {\it Mathematica 11}.}\end{remark}

\begin{figure}[ht]
\begin{center}
\includegraphics[height=5cm,width=6cm]{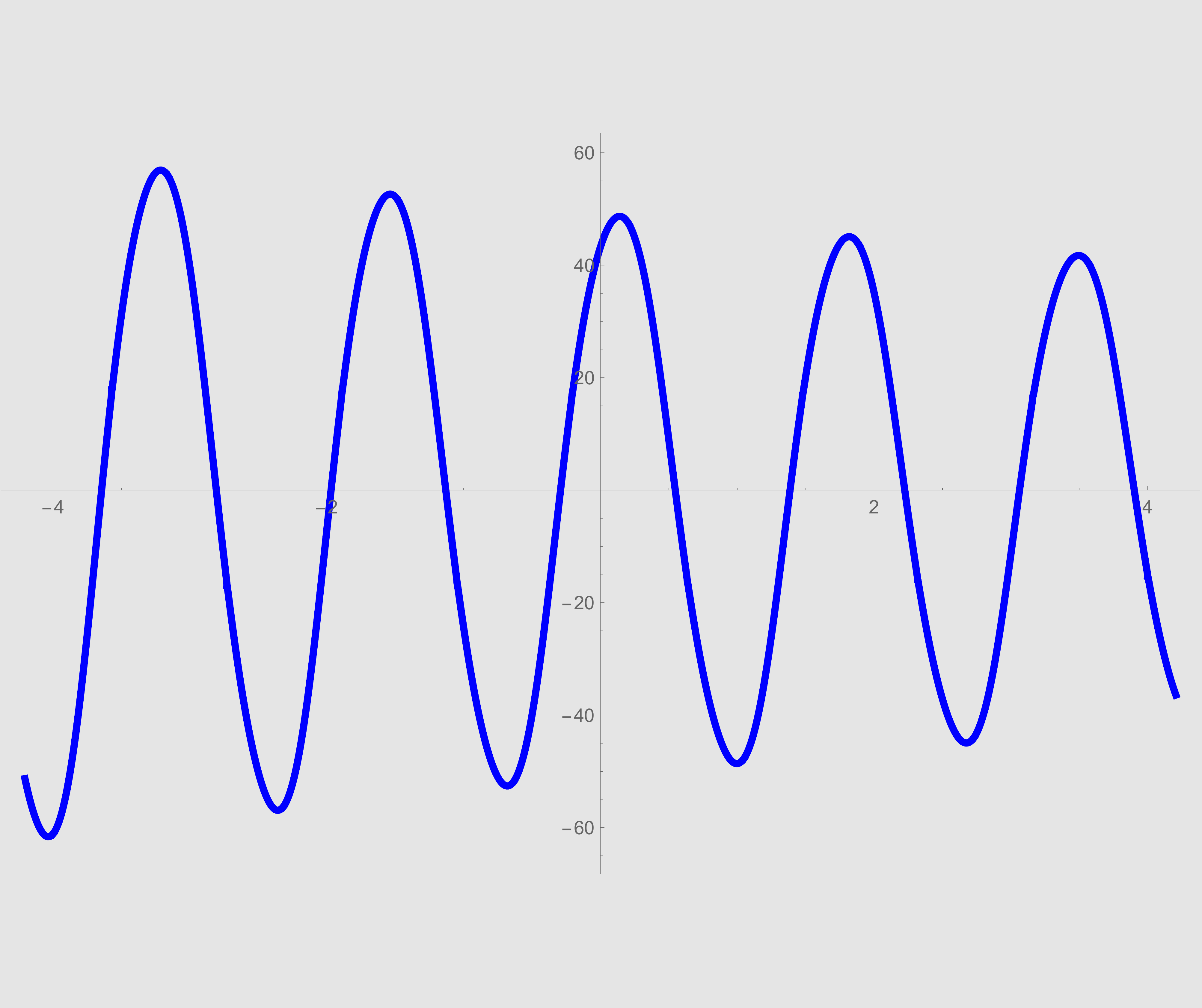}
\includegraphics[height=5cm,width=6cm]{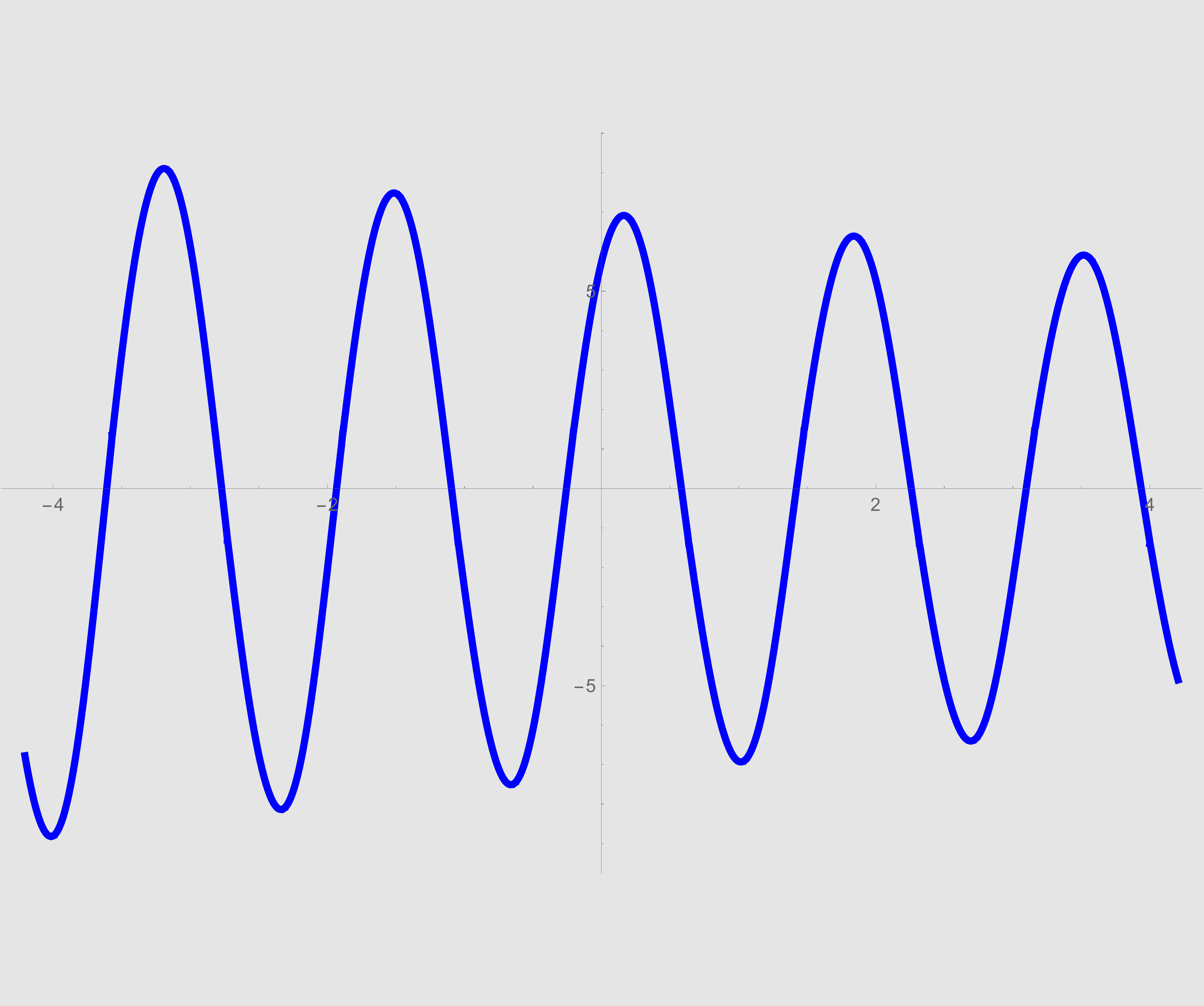}
\caption{The graphs of the functions $e^{-\delta_{\lambda}}(T^0_{\lambda}-\eta_{\lambda}L^0_{\lambda})$ (on the left) and  $e^{-\delta_{\lambda}}(T^3_{\lambda}-\eta_{\lambda}L^3_{\lambda})$ (on the right) when $\lambda\in \R$ is a multiple root of $P_\mathfrak{m}$.}\label{FIG5}
\end{center}
\end{figure}

\begin{prop}\label{PV}{Let $\lambda\in \mathcal{S}_{\mathfrak{m}}$ be an eigenvalue of the momentum, then
\begin{equation}\label{prax1}e^{-\delta_{\lambda}}\sum_{j=0}^{5}L^j_{\lambda}B_j=\mathbf{A}_{\lambda},
\end{equation}
where $\mathbf{A}_{\lambda}\in \C^6$ is an $\mathfrak{m}$-eigenvector of  the eigenvalue $\lambda$. If $\lambda$ is a multiple root of $P_{\mathfrak{m}}$, then
\begin{equation}\label{prax2}e^{-\delta_{\lambda}}\sum_{j=0}^{5}(T^j_{\lambda}-\eta_{\lambda}L^j_{\lambda})B_j
=\mathbf{C}_{\lambda},
\end{equation}
where $\mathbf{C}_{\lambda}\in \C^6$ is a non-zero vector such that
$$\mathbf{C}_{\lambda}\wedge \mathbf{A}_{\lambda}\neq 0,\quad \mathfrak{m}(\mathbf{C}_{\lambda})=\lambda \mathbf{C}_{\lambda}+\mathbf{A}_{\lambda}.$$
We call $\mathbf{A}_{\lambda}$ the {\rm principal vector} of the eigenvalue $\lambda$ and $\mathbf{C}_{\lambda}$ the {\rm secondary principal vector} of the multiple eigenvalue
$\lambda$.}\end{prop}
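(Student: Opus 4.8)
The plan is to prove that the $\C^6$-valued maps on the left of (\ref{prax1}) and (\ref{prax2}) are in fact \emph{constant}, and that the resulting constants have the stated spectral meaning. Write $L_\lambda,T_\lambda$ as column vectors and let $B=(B_0\,|\,\cdots\,|\,B_5)$ denote the matrix whose columns are the frame vectors, so that $\mathbf{L}_\lambda:=\sum_{j=0}^5 L^j_\lambda B_j = B\,L_\lambda$ and $\mathbf{T}_\lambda:=\sum_{j=0}^5 T^j_\lambda B_j = B\,T_\lambda$; in this notation the conjugation relation (\ref{momentum}) reads $\mathfrak{m}\,B=B\,\mathcal{H}$ and the frame equation (\ref{mcc}) reads $\dot B=B\,\mathcal{K}$. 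Since, for $u\notin D_\lambda$, the vector $L_\lambda|_u$ spans the one-dimensional eigenspace $\mathcal{V}_\lambda|_u=\ker(\mathcal{H}|_u-\lambda)$ (proof of Proposition \ref{CT}), the relation $\mathfrak{m}B=B\mathcal{H}$ gives $\mathfrak{m}\,\mathbf{L}_\lambda=B\,\mathcal{H}L_\lambda=\lambda\,\mathbf{L}_\lambda$, so $\mathbf{L}_\lambda|_u$ is an $\mathfrak{m}$-eigenvector at every regular $u$. The first key step is the differential identity $\dot L_\lambda+\mathcal{K}L_\lambda=r_\lambda L_\lambda$, which I would obtain \emph{structurally}: differentiating $\mathcal{H}L_\lambda=\lambda L_\lambda$ and inserting the Lax equation (\ref{lax}) yields $\mathcal{H}(\dot L_\lambda+\mathcal{K}L_\lambda)=\lambda(\dot L_\lambda+\mathcal{K}L_\lambda)$, so $\dot L_\lambda+\mathcal{K}L_\lambda$ again lies in the line $\mathcal{V}_\lambda|_u$ and is a scalar multiple of $L_\lambda$; reading off the last component (where $L^5_\lambda=\lambda(\lambda^2-k_2^2)$ is simplest) identifies the scalar with $r_\lambda$ of (\ref{if2}). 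Differentiating $\mathbf{L}_\lambda=BL_\lambda$ and using $\dot B=B\mathcal{K}$ then gives $\dot{\mathbf{L}}_\lambda=r_\lambda\mathbf{L}_\lambda$, whence $\frac{d}{du}\bigl(e^{-\delta_\lambda}\mathbf{L}_\lambda\bigr)=0$ because $\dot\delta_\lambda=r_\lambda$. As $e^{-\delta_\lambda}L_\lambda$ extends real-analytically across $\widetilde D_\lambda$ (definition of the integrating factor of the first kind) and $B$ is real-analytic on $\R$, the product $e^{-\delta_\lambda}\mathbf{L}_\lambda$ is a real-analytic map that is locally constant on a dense open set, hence globally constant; calling it $\mathbf{A}_\lambda$ and evaluating at a regular point shows $\mathbf{A}_\lambda$ is a nonzero $\mathfrak{m}$-eigenvector of eigenvalue $\lambda$. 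This is (\ref{prax1}).

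For (\ref{prax2}), with $\lambda$ a multiple root of $P_\mathfrak{m}$, the pivotal fact is the pointwise algebraic identity
\begin{equation}\label{planT}
(\mathcal{H}-\lambda)\,T_\lambda = L_\lambda,
\end{equation}
which presents $T_\lambda$ as a generalized eigenvector completing $L_\lambda$ to a Jordan chain of $\mathcal{H}$. Granting (\ref{planT}), I would differentiate it, substitute (\ref{lax}) and the already-established $\dot L_\lambda=r_\lambda L_\lambda-\mathcal{K}L_\lambda$, and repeat the one-line eigenspace argument to get $(\mathcal{H}-\lambda)(\dot T_\lambda+\mathcal{K}T_\lambda-r_\lambda T_\lambda)=0$; hence $\dot T_\lambda+\mathcal{K}T_\lambda-r_\lambda T_\lambda\in\mathrm{span}(L_\lambda)$, and the last component fixes the scalar as $s_\lambda$, giving $\dot T_\lambda+\mathcal{K}T_\lambda=r_\lambda T_\lambda+s_\lambda L_\lambda$. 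Differentiating $\mathbf{C}_\lambda:=e^{-\delta_\lambda}(\mathbf{T}_\lambda-\eta_\lambda\mathbf{L}_\lambda)$ and using $\dot B=B\mathcal{K}$, $\dot\delta_\lambda=r_\lambda$, $\dot\eta_\lambda=s_\lambda$ and $\dot{\mathbf{L}}_\lambda=r_\lambda\mathbf{L}_\lambda$, every term cancels and $\dot{\mathbf{C}}_\lambda=0$; the real-analytic extension built into the integrating factor of the second kind makes $\mathbf{C}_\lambda$ a genuine constant. Finally, applying $\mathfrak{m}$ and using $\mathfrak{m}B=B\mathcal{H}$ together with (\ref{planT}) and $\mathcal{H}L_\lambda=\lambda L_\lambda$ yields $\mathfrak{m}\mathbf{C}_\lambda=\lambda\mathbf{C}_\lambda+\mathbf{A}_\lambda$; since $\mathbf{A}_\lambda\neq0$ this is incompatible with $\mathbf{C}_\lambda$ being proportional to $\mathbf{A}_\lambda$, so in particular $\mathbf{C}_\lambda\neq0$ and $\mathbf{C}_\lambda\wedge\mathbf{A}_\lambda\neq0$, completing (\ref{prax2}).

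The main obstacle is the verification of (\ref{planT}): unlike the first identity it is \emph{not} forced by the one-dimensionality of $\mathcal{V}_\lambda$, and must be checked component by component from the explicit matrix (\ref{obs}) and the formulas (\ref{L}), (\ref{T}). The components of index $1$ through $5$ close by elementary algebra together with $k_1=\tfrac32 k_2^2+c_1$ (which enters only in the index-$2$ entry), whereas the index-$0$ entry is the delicate one: after using the first integral $(k_2\dot k_2)^2=-(k_2^2-e_1)(k_2^2-e_2)(k_2^2-e_3)$ of (\ref{eee1}) to eliminate $\dot k_2^{\,2}$, one finds that the remaining identity reduces precisely to $3\rho^2+4c_1\rho+(c_2+1)=0$ with $\rho=\lambda^2$. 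This is exactly the derivative condition characterizing $\rho$ as a \emph{double} root of the cubic whose roots are $\rho_1,\rho_2,\rho_3$, i.e. the standing hypothesis that $\lambda$ be a multiple root of $P_\mathfrak{m}$; thus the double-root condition is not incidental but the very mechanism that makes $T_\lambda$ a bona fide Jordan vector. The only other point requiring care is the bookkeeping of the discrete sets $D_\lambda,\widetilde D_\lambda$ (empty when $\lambda\notin\R$, where the argument is immediate), which is dispatched uniformly by the real-analytic continuation used above.
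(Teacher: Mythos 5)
Your proof is correct and follows essentially the same route as the paper: both arguments reduce (\ref{prax1}) and (\ref{prax2}) to showing $\dot{\mathbf{L}}_{\lambda}=r_{\lambda}\mathbf{L}_{\lambda}$ and $\frac{d}{du}\bigl(e^{-\delta_{\lambda}}(\mathbf{T}_{\lambda}-\eta_{\lambda}\mathbf{L}_{\lambda})\bigr)=0$ by exploiting the one-dimensionality of the $\lambda$-eigenspace, identifying the scalar factors from a single (the last) component, and then invoking the real-analytic extension built into the integrating factors to pass across $D_{\lambda}$. The only differences are cosmetic -- you derive the proportionality from the Lax equation (\ref{lax}) at the level of the coefficient vectors $L_{\lambda}$, $T_{\lambda}$ where the paper works with $\mathbf{L}_{\lambda}$, $\mathbf{T}_{\lambda}$ and the constancy of $B\cdot\mathcal{H}\cdot B^{-1}$, and you explicitly flag that the Jordan-chain identity $(\mathcal{H}-\lambda)T_{\lambda}=L_{\lambda}$ requires a componentwise check hinging on the double-root condition, a point the paper states without proof.
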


\begin{proof}\label{PFIF}{Denote by $\mathbf{L}_{\lambda}:\R\to \C^6$ the map
\begin{equation}\label{eigen}\mathbf{L}_{\lambda}=\sum_{i=0}^{5}L_{\lambda}^i(u)B_i|_u.\end{equation}
Let $\mathbb{V}_{\lambda}$ be the $1$-dimensional $\mathfrak{m}$-eigenspace of the eigenvalue $\lambda$. Then, (\ref{momentum}) implies that $\mathbf{L}_{\lambda}|_u\in \mathbb{V}_{\lambda}$, for every $u\in \R$. Therefore, there exist a unique real-analytic map $\widetilde{r}_{\lambda}:\R-D_{\lambda}\to \C$ such that
\begin{equation}\label{if1} \dot{\mathbf{L}}_{\lambda}|_u=\widetilde{r}_{\lambda}|_u\mathbf{L}_{\lambda}|_u\quad \forall u\in \R-D_{\lambda}.\end{equation}
From $\dot{B}=B\cdot \mathcal{K}$ and keeping in mind (\ref{mcc2}) we have
\begin{equation}\label{if11}\dot{\mathbf{L}}_{\lambda}\equiv \frac{k_2\dot{k}_2+\lambda}{k_2^2-\lambda_j^2}\mathbf{L}_{\lambda}\quad \mathrm{mod}(B_0,B_1,B_2,B_3,B_4).\end{equation}
 From (\ref{if1}) and (\ref{if11}) we have $r_{\lambda}=\widetilde{r}_{\lambda}$. Using (\ref{if1}), we deduce that $e^{-\delta_{\lambda}}\mathbf{L}_{\lambda}$ is constant on $\R-D_{\lambda}$. On the other hand, $e^{-\delta_{\lambda}}L_{\lambda}$ extends smoothly across $D_{\lambda}$ and hence also $e^{-\delta_{\lambda}}\mathbf{L}_{\lambda}$ extends to a real-analytic map $\R\to \C^6$. This implies that $e^{-\delta_{\lambda}}\mathbf{L}_{\lambda}=\mathbf{A}_{\lambda}$, for some $\mathbf{A}_{\lambda}\in \mathbb{V}_{\lambda}$. This proves the first part of the statement.

\noindent Let $\lambda$ be a multiple root of $P_{\mathfrak{m}}$. The map $T_{\lambda}$ satisfies
\begin{equation}\label{PT} \mathcal{H}|_u\cdot T_{\lambda}|_u=\lambda T_{\lambda}|_u+L_{\lambda}|_u,\quad
T_{\lambda}|_u\wedge L_{\lambda}|_u\neq 0,\quad \forall u\in \R-D_{\lambda}.
\end{equation}
We put
$$\mathbf{T}_{\lambda}=e^{-\delta_{\lambda}}\sum_{j=0}^{5} T^j_{\lambda}B_j : \R-D_{\lambda}\to \C^6.$$
From (\ref{PT}) we obtain
\begin{equation}\label{PPT}\mathfrak{m}(\mathbf{T}_{\lambda})=\lambda \mathbf{T}_{\lambda} +\mathbf{A}_{\lambda},\quad
\mathbf{T}_{\lambda}\wedge \mathbf{A}_{\lambda}\neq 0.\end{equation}
Differentiating the first equation in (\ref{PPT}) we get $\mathfrak{m}(\mathbf{T}_{\lambda}')=\lambda \mathbf{T}_{\lambda}'$. Thus, there exist a unique real-analytic function $\widetilde{s}_{\lambda}:\R-D_{\lambda}\to \C$ such that $\mathbf{T}_{\lambda}'=\widetilde{s}_{\lambda}\mathbf{A}_{\lambda}$. Using $\dot{B}=B\cdot \mathcal{K}$ we obtain
$$\dot{\mathbf{T}}_{\lambda} \equiv \frac{\lambda^2+k_2^2-2\lambda k_2\dot{k}_2}{(\lambda-k_2^2)^2} \mathbf{A}_{\lambda},\quad \mathrm{mod}(B_0,B_1,B_2,B_3,B_5).$$
Then, $\widetilde{s}_{\lambda}=s_{\lambda}$. This implies that $\mathbf{T}_{\lambda}-\eta_{\lambda}\mathbf{A}_{\lambda}$ is constant on $\R-D_{\lambda}$. Since $e^{-\delta_{\lambda}}(T_{\lambda}-\eta_{\lambda}L_{\lambda})$ extends smoothly across $\widetilde{D}_{\lambda}$, also $\mathbf{T}_{\lambda}-\eta_{\lambda}\mathbf{A}_{\lambda}$ extends to a smooth (real-analytic) map $\R\to \C^6$. Hence there exist $\mathbf{C}_{\lambda}\in \C^6$ such that $\mathbf{T}_{\lambda}-\eta_{\lambda}A_{\lambda}=\mathbf{C}_{\lambda}$. Using (\ref{PPT}) it follows that $\mathfrak{m}(\mathbf{C}_{\lambda})=\lambda \mathbf{C}_{\lambda} + \mathbf{A}_{\lambda}$ and that $\mathbf{C}_{\lambda}\wedge \mathbf{A}_{\lambda}\neq 0$. This yields the result.}\end{proof}

\begin{remark}{Since $\gamma$ is a standard configuration of a world-line then $B|_0=\mathrm{Id}_{6\times 6}$. Hence
$$\mathbf{A}_{\lambda}=L_{\lambda}|_0\quad \mathbf{C}_{\lambda}= T_{\lambda}|_0.$$
Therefore, the principal vectors can be computed explicitly in terms of the phase parameters $e_1,e_2$ and $e_3$.}\end{remark}

\section{Integrability by quadratures}
\subsection{Integrability by quadratures of the regular conformal world-lines} Let $\gamma:\R\to \mathcal{E}^{1,3}$ be the standard configuration of a regular linearly full conformal world-line. Its momentum operator has six simple roots $\lambda_0,\dots ,\lambda_5$, ordered as in (\ref{eigenvl}).
Let $\delta_j$ be the integrating factor of the first kind and
$\mathbf{A}_j$ be the principal vector of $\lambda_j$ respectively. We denote by $\mathbf{A}\in \C(6,6)$ the matrix with column vectors $\mathbf{A}_0,\dots , \mathbf{A}_5$ and we define
\footnote{$\varepsilon_j=^t(\delta^0_j,\dots, \delta^5_j)$ are the column vectors of the canonical basis of $\R^6$.} the real-analytic maps
$$\Delta,\Lambda : \R-D_{\lambda}\to \C(6,6),\quad \mathbf{X}:\R\to \C(6,6)$$
by
$$\begin{cases}
\Delta = (e^{-\delta_0}\varepsilon_0,\dots e^{-\delta_5}\varepsilon_5),\\
\Lambda = (L_{\lambda_0},\dots, L_{\lambda_5}),\\
\mathbf{X}=\Delta\cdot \Lambda.\end{cases}
$$

\begin{thm}\label{ThmC}{Let $\gamma$ be as above, then
$$\gamma =\mathtt{m}\cdot ^t(\mathbf{A}^{-1})\cdot \mathbf{X} \cdot \mathtt{m}\cdot E_0,$$
where $\mathtt{m}$ is the matrix representing the scalar product $\langle-,-\rangle$ with respect to the standard light-cone basis $(E_0,\dots, E_5)$ of $\R^{2,4}$.}\end{thm}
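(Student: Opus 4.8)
The plan is to turn the diagonalizing identities of Proposition~\ref{PV} into a single matrix equation for the canonical frame $B$ and then solve that equation for $B$ (hence for the canonical lift $\gamma=B_0$) by exploiting the pseudo-orthogonality of $B$, which is exactly what converts the inverse of the coefficient matrix into the $\mathtt{m}$-conjugated transpose appearing in the statement. The underlying idea is that, read in the eigenbasis of the momentum $\mathfrak{m}$, the frame moves diagonally: by Proposition~\ref{PV} the vector $\mathbf{L}_{\lambda_j}=\sum_i L^i_{\lambda_j}B_i=B\cdot L_{\lambda_j}$ satisfies $e^{-\delta_j}\mathbf{L}_{\lambda_j}=\mathbf{A}_j$, so $B$ carries the explicit, globally real-analytic vector $e^{-\delta_j}L_{\lambda_j}$ onto the constant eigenvector $\mathbf{A}_j$.

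First I would assemble these six identities into the matrix equation $B\cdot\mathbf{X}'=\mathbf{A}$, whose $j$-th column reads $B\cdot(e^{-\delta_j}L_{\lambda_j})=\mathbf{A}_j$; here $\mathbf{X}'$ is the matrix with columns $e^{-\delta_j}L_{\lambda_j}$, and both sides are real-analytic on all of $\R$ because, by Proposition~\ref{PV}, the singularities of $e^{-\delta_j}$ cancel the zeroes of $L_{\lambda_j}$. Evaluating at $u=0$, where $B|_0=\mathrm{Id}$, $\delta_j|_0=0$ and $\mathbf{A}_j=L_{\lambda_j}|_0$, gives a consistency check. Next I would record that $\mathbf{A}$ is invertible: since $\gamma$ is regular, $\mathfrak{m}$ has six distinct eigenvalues, so its eigenvectors $\mathbf{A}_0,\dots,\mathbf{A}_5$ are linearly independent and $\mathbf{A}\in\GL(6,\C)$.

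The key algebraic step is to solve $B\cdot\mathbf{X}'=\mathbf{A}$ for $B$ without inverting $\mathbf{X}'$. Because $B$ takes values in $\mathrm{A}^{\uparrow}_+(2,4)$ we have ${}^tB\cdot\mathtt{m}\cdot B=\mathtt{m}$, and since $\mathtt{m}^2=\mathrm{Id}$ this yields $B^{-1}=\mathtt{m}\cdot{}^tB\cdot\mathtt{m}$. Feeding this into $\mathbf{X}'=B^{-1}\mathbf{A}=\mathtt{m}\,{}^tB\,\mathtt{m}\,\mathbf{A}$ and transposing turns the inverse into a transpose, giving $B=\mathtt{m}\cdot{}^t(\mathbf{A}^{-1})\cdot\mathbf{X}\cdot\mathtt{m}$, where $\mathbf{X}=\Delta\cdot\Lambda$ is the transpose of the column matrix $\mathbf{X}'$ once the row/column convention for $\Lambda$ is taken into account. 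Applying both sides to $E_0$ extracts the first column $\gamma=B_0=\mathtt{m}\cdot{}^t(\mathbf{A}^{-1})\cdot\mathbf{X}\cdot\mathtt{m}\cdot E_0$. Since the only transcendental ingredients are the integrating factors $\delta_j$, which are obtained by quadrature, this is precisely an integration by quadratures.

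The hard part will be bookkeeping rather than any deep estimate: one must track the transpose and column-ordering conventions carefully so that the pseudo-orthogonal manipulation delivers exactly the sandwich $\mathtt{m}\,{}^t(\mathbf{A}^{-1})\,\mathbf{X}\,\mathtt{m}$ and not a variant, and one must lean on Proposition~\ref{PV} to guarantee that $\mathbf{X}$ is genuinely real-analytic across the singular set where the individual factors blow up. A final sanity check is that the right-hand side is real although $\mathbf{A}$ and $\mathbf{X}$ are complex; this holds because the non-real eigenvalues and their principal vectors occur in conjugate pairs, so their contributions reorganize into real invariant planes.
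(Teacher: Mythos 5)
Your proposal is correct and follows essentially the same route as the paper: both start from the identities $e^{-\delta_j}B\cdot L_{\lambda_j}=\mathbf{A}_j$ of Proposition \ref{PV} assembled into a single matrix equation, and both use the pseudo-orthogonality relation ${}^tB\cdot\mathtt{m}\cdot B=\mathtt{m}$ to trade the inverse of the transition matrix for a transpose, yielding $B=\mathtt{m}\cdot{}^t(\mathbf{A}^{-1})\cdot\mathbf{X}\cdot\mathtt{m}$ and hence $\gamma=B\cdot E_0$. Your added remarks (invertibility of $\mathbf{A}$ from the simplicity of the spectrum, real-analyticity of $\mathbf{X}$ across the singular sets, and reality of the final expression) are correct and only make explicit what the paper leaves implicit.
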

\begin{proof}{Since $\mathbf{A}_j=e^{-\delta_j}B\cdot L_{\lambda_j}$, $j=0,\dots, 5$, we have
$\mathbf{A}=B\cdot \Lambda\cdot \Delta$, that is
\begin{equation}\label{FFF1}B=\mathbf{A}\cdot \Delta^{-1}\cdot \Lambda^{-1}.\end{equation}
From (\ref{FFF1}) and keeping in mind that $^tB\cdot \mathtt{m}\cdot B = \mathtt{m}$ we get
\begin{equation}\label{FFF2}\Lambda=\mathtt{m}\cdot \mathbf{X}^{-1}\cdot ^t\mathbf{A}\cdot \mathtt{m}\cdot \mathbf{A}\cdot \Delta^{-1}.\end{equation}
Inserting (\ref{FFF2}) in (\ref{FFF1}) we have
$$B=\mathtt{m}\cdot ^t(\mathbf{A}^{-1})\cdot \mathbf{X}\cdot \mathtt{m}.$$
This yields the result.
}\end{proof}

\subsection{Integrability by quadratures of the exceptional conformal world-lines} Let $\gamma:\R\to \mathcal{E}^{1,3}$ be the standard configuration of an exceptional linearly full conformal world-line. Its momentum operator has four distinct roots $\lambda_0,\dots ,\lambda_3$, ordered as in (\ref{eigenvl}). Then, $\lambda_0$ and $\lambda_1$ are simple purely imaginary roots and $\lambda_2,\lambda_3$ are real double roots of $P_{\mathfrak{m}}$. For each eigenvalue $\lambda_j$, $j=0,\dots, 3$, let $\delta_j$ be its integrating factor of the first kind and $\mathbf{A}_j$ be the corresponding principal vector. For each double root $\lambda_j$, $j=2,3$, let $\eta_j$ be the integrating factor of the second kind and $\mathbf{C}_j$ be the secondary principal vector. We denote by $\widetilde{\mathbf{A}}$ the matrix $\widetilde{\mathbf{A}}=
(\mathbf{A}_0,\mathbf{A}_1,\mathbf{A}_2,\mathbf{A}_3,\mathbf{C}_2,\mathbf{C}_3)$. Let
$$\widetilde{\Delta},\widetilde{\Lambda}:\R-D_{\lambda}\to \C(6,6),\quad
\widetilde{\mathbf{X}}:\R\to \C(6,6)
$$
be the real-analytic maps defined by
$$\begin{cases}
\widetilde{\Delta}=(e^{-\delta_0}\varepsilon_0,e^{-\delta_1}\varepsilon_1,
    e^{-\delta_2}\varepsilon_2,e^{-\delta_3}\varepsilon_3,e^{-\delta_2}\varepsilon_4,
    e^{-\delta_3}\varepsilon_5),\\
\widetilde{\Lambda}=(L_{\lambda_0},L_{\lambda_1},L_{\lambda_2},L_{\lambda_3},
T_{\lambda_2}-\eta_2L_{\lambda_2},T_{\lambda_3}-\eta_3 L{\lambda_3}),\\
\widetilde{\mathbf{X}}=\widetilde{\Delta}\cdot \widetilde{\Lambda}
\end{cases}
$$

\begin{thm}\label{ThmD}{Let $\gamma$ be as above, then
$$\gamma =\mathtt{m}\cdot ^t(\widetilde{\mathbf{A}}^{-1})\cdot \widetilde{\mathbf{X}}\cdot \mathtt{m}\cdot E_0.$$}\end{thm}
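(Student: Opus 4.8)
The plan is to run the argument in exact parallel with the proof of Theorem \ref{ThmC}, the only genuinely new feature being that the momentum operator $\mathfrak{m}$ is now exceptional rather than regular, so that the decomposition of $\C^6$ is governed by two Jordan chains of length two attached to the double eigenvalues $\lambda_2,\lambda_3$. Accordingly, in place of the eigenbasis $(\mathbf{A}_0,\dots,\mathbf{A}_5)$ used for $\mathbf{A}$, I work with the Jordan basis $(\mathbf{A}_0,\mathbf{A}_1,\mathbf{A}_2,\mathbf{A}_3,\mathbf{C}_2,\mathbf{C}_3)$ assembled into $\widetilde{\mathbf{A}}$.

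First I would record, from Proposition \ref{PV}, the two families of relations in the equivalent form $B\cdot L_{\lambda_j}=e^{\delta_j}\mathbf{A}_j$ for $j=0,1,2,3$ (equation (\ref{prax1})) and $B\cdot(T_{\lambda_j}-\eta_j L_{\lambda_j})=e^{\delta_j}\mathbf{C}_j$ for $j=2,3$ (equation (\ref{prax2})). Reading these column by column — columns $0$ through $3$ from the first family, columns $4,5$ from the second — they collapse into the single matrix identity $B\cdot\widetilde{\Lambda}=\widetilde{\mathbf{A}}\cdot\widetilde{\Delta}^{-1}$, whence $B=\widetilde{\mathbf{A}}\cdot\widetilde{\Delta}^{-1}\cdot\widetilde{\Lambda}^{-1}$, the exact analogue of (\ref{FFF1}). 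Here one uses that $\widetilde{\Delta}$ is the diagonal matrix $\mathrm{diag}(e^{-\delta_0},e^{-\delta_1},e^{-\delta_2},e^{-\delta_3},e^{-\delta_2},e^{-\delta_3})$, so that scaling the columns of $\widetilde{\Lambda}$ reproduces precisely the exponential factors appearing in (\ref{prax1}) and (\ref{prax2}).

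Before inverting I must check that $\widetilde{\mathbf{A}}$ is invertible, i.e. that its six columns form a basis of $\C^6$. Here I would invoke that $\gamma$ is exceptional: every $\lambda\in\mathcal{S}_{\mathfrak{m}}$ has a one-dimensional eigenspace, with $\lambda_0,\lambda_1$ simple and $\lambda_2,\lambda_3$ double. The vectors $\mathbf{A}_0,\dots,\mathbf{A}_3$ are eigenvectors for four distinct eigenvalues, while Proposition \ref{PV} furnishes $\mathfrak{m}(\mathbf{C}_j)=\lambda_j\mathbf{C}_j+\mathbf{A}_j$ and $\mathbf{C}_j\wedge\mathbf{A}_j\neq 0$, so $(\mathbf{A}_j,\mathbf{C}_j)$ is a length-two Jordan chain for $\lambda_j$, $j=2,3$. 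Since a union of Jordan chains belonging to distinct eigenvalues is linearly independent, $\widetilde{\mathbf{A}}$ is invertible, and so is $\widetilde{\Lambda}$ on $\R-D_{\lambda}$.

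Finally I would reproduce the orthogonality manipulation of Theorem \ref{ThmC}: starting from $B=\widetilde{\mathbf{A}}\cdot\widetilde{\Delta}^{-1}\cdot\widetilde{\Lambda}^{-1}$ and using $^{t}B\cdot\mathtt{m}\cdot B=\mathtt{m}$ together with $\mathtt{m}^2=\mathrm{Id}$, solve for $\widetilde{\Lambda}$ in terms of $\widetilde{\mathbf{X}}=\widetilde{\Delta}\cdot\widetilde{\Lambda}$ and $\widetilde{\mathbf{A}}$, substitute back, and obtain $B=\mathtt{m}\cdot{}^{t}(\widetilde{\mathbf{A}}^{-1})\cdot\widetilde{\mathbf{X}}\cdot\mathtt{m}$; the statement then follows by restricting to the first column, $\gamma=[B_0]=[B\cdot E_0]$. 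I expect this last algebraic step to be the main obstacle. In the regular case it rests on the non-degeneracy of the pairing matrix $\widetilde{G}={}^{t}\widetilde{\mathbf{A}}\cdot\mathtt{m}\cdot\widetilde{\mathbf{A}}$, through the identity ${}^{t}(\widetilde{\mathbf{A}}^{-1})=\mathtt{m}\cdot\widetilde{\mathbf{A}}\cdot\widetilde{G}^{-1}$ that is used to clear $\widetilde{\Lambda}^{-1}$. Because $\mathfrak{m}\in\mathfrak{a}(2,4)$ is skew for $\langle-,-\rangle$, its spectrum is symmetric about the origin and $\widetilde{G}$ pairs each vector of a chain only with the chain attached to the opposite eigenvalue; the delicate point is to verify that this pairing stays non-degenerate once eigenvectors are mixed with generalized eigenvectors, for only then does $\widetilde{G}^{-1}$ exist and the computation close exactly as in the regular case. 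This non-degeneracy is however guaranteed by the non-degeneracy of $\mathtt{m}$ together with the invertibility of $\widetilde{\mathbf{A}}$ established above, so the remaining verification is purely formal.
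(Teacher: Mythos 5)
Your proposal is correct and follows essentially the same route as the paper: the identities $e^{-\delta_j}B\cdot L_{\lambda_j}=\mathbf{A}_j$ and $e^{-\delta_j}B\cdot(T_{\lambda_j}-\eta_j L_{\lambda_j})=\mathbf{C}_j$ from Proposition \ref{PV} are packaged into $B=\widetilde{\mathbf{A}}\cdot\widetilde{\Delta}^{-1}\cdot\widetilde{\Lambda}^{-1}$, and the pseudo-orthogonality relation ${}^tB\cdot\mathtt{m}\cdot B=\mathtt{m}$ is then used to eliminate $\widetilde{\Lambda}^{-1}$ and obtain $B=\mathtt{m}\cdot{}^t(\widetilde{\mathbf{A}}^{-1})\cdot\widetilde{\mathbf{X}}\cdot\mathtt{m}$, exactly as in the paper's equations (\ref{FFFF1})--(\ref{FFFF2}). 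Your added verification that the Jordan basis $(\mathbf{A}_0,\mathbf{A}_1,\mathbf{A}_2,\mathbf{A}_3,\mathbf{C}_2,\mathbf{C}_3)$ is linearly independent, hence that $\widetilde{\mathbf{A}}$ and the Gram matrix ${}^t\widetilde{\mathbf{A}}\cdot\mathtt{m}\cdot\widetilde{\mathbf{A}}$ are invertible, is a point the paper leaves implicit, but it does not change the argument.
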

\begin{proof}{From Proposition \ref{PFIF} it follows that the canonical frame, the principal vectors and the integrating factors satisfy
$$e^{-\delta_0}B\cdot L_{\lambda_0}=\mathbf{A}_0,\quad e^{-\delta_1}B\cdot L_{\lambda_1}=\mathbf{A}_1,\quad e^{-\delta_2}B\cdot L_{\lambda_2}=\mathbf{A}_2,\quad e^{-\delta_3}B\cdot L_3=\mathbf{A}_3$$
and
$$e^{-\delta_2}B\cdot (T_{\lambda_2}-\eta_2L_{\lambda_w})=\mathbf{C}_2\quad e^{-\delta_3}B\cdot (T_{\lambda_3}-\eta_3L_{\lambda_3})=\mathbf{C}_3.$$
We then have
\begin{equation}\label{FFFF1}B=\widetilde{\mathbf{A}}\cdot \widetilde{\Delta}^{-1}\cdot \widetilde{\Lambda}^{-1}.\end{equation}
Combining (\ref{FFFF1}) with $^tB\cdot \mathtt{m}\cdot B= \mathtt{m}$ we obtain
\begin{equation}\label{FFFF2}\widetilde{\Lambda}^{-1}=\widetilde{\Delta}\cdot \widetilde{\mathbf{A}}^{-1}\cdot \mathtt{m}\cdot ^t(\widetilde{\mathbf{A}}^{-1})\cdot \widetilde{\mathbf{X}}\cdot \mathtt{m}.\end{equation}
Substituting (\ref{FFFF2}) into (\ref{FFFF1}) we find $B=\mathtt{m}\cdot ^t(\widetilde{\mathbf{A}}^{-1})\cdot \widetilde{\mathbf{X}}\cdot \mathtt{m}$. We have thus proved the result.}\end{proof}

\subsection{Final comments}
The theoretical explanation of the integrability by quadratures lies in the Arnold-Liouville integrability of the Euler-Lagrange differential system. With this we mean the following : let $\mathfrak{m}\in \mathfrak{a}(2,4)$ be the momentum of a linearly full world-line $\gamma$ with non-constant curvatures, then $\mathfrak{m}$ is either regular or exceptional. The stabilizer $A^{\uparrow}_+(2,4)_{\mathfrak{m}}$ of $\mathfrak{m}$ is a $3$-dimensional closed subgroup, diffeomorphic to $S^1\times \R^2$. The inverse image of $\mathfrak{m}^{\natural}$ by the momentum map is a four-dimensional sub-manifold $Z_{\mathfrak{m}}\subset Z$ and the characteristic vector field $X_{\zeta}$ is tangent to $Z_{\mathfrak{m}}$. The world-lines with momentum $\mathfrak{m}$ are originated by the integral curves of $X_{\zeta}|_{Z_{\mathfrak{m}}}$.  The stabilizer $A^{\uparrow}_+(2,4)_{\mathfrak{m}}$ acts freely on $Z_{\mathfrak{m}}$ and the quotient space $Z_{\mathfrak{m}}/A^{\uparrow}_+(2,4)_{\mathfrak{m}}$ is a circle. This implies that $Z_{\mathfrak{m}}\subset Z$ is diffeomorphic to the Cartesian product of $\R^2$ with a $2$-dimensional torus $T^2$. In principle, the integration by quadratures  can be achieved by a diffeomorphism $\Psi_{\mathfrak{m}} : Z_{\mathfrak{m}}\to \R^2\times T^2$ such that $\Psi_*(X_{\zeta})$ is a linear vector field. Since the stabilizer of the momentum operator is non compact then the trajectory of $\gamma$ can't be closed. Instead, if $\gamma$ is trapped in a $3$-dimensional Einstein universe, the stabilizer of the momentum operator can be a maximal compact abelian subgroup of $A^{\uparrow}_+(2,3)$. Thus, in this case, there are countably many closed world-lines with non-constant curvatures, as it has been shown in \cite{DMN}.

\noindent  The periodicity of the conformal curvatures implies that  its trajectory of $\gamma$ is left unchanged by the action of the infinite cyclic subgroup generated by $B(\omega)\cdot B(0)^{-1}\in A^{\uparrow}_+(2,4)$.

\noindent Lastly, we note that if we know the phase parameters $e_1,e_2,e_3$ all the steps of the integration procedure can be implemented in {\it Mathematica} and are fully operative from a computational viewpoint.


\section{Appendix : the integrating factors}

\subsection{Integrating factors of the first kind} Given $\lambda\in \mathcal{S}_{\mathfrak{m}}$ we put
\begin{equation}\label{abc}a=\ell_1-\lambda^2\ell_3,\quad b=\ell_2-\lambda\ell_4,\quad c=\frac{\lambda\ell_4}{\ell_2-\lambda^2\ell_4}\end{equation}
and
\begin{equation}\label{d}d=\frac{\lambda (\ell_2\ell_3 -\ell_1 \ell_4)}
{(\ell_2-\lambda^2\ell_4)(\ell_1-\lambda^2\ell_3)},
\end{equation}
where $\ell_1,\ell_2,\ell_3$ and $\ell_4$ are the constants defined as in (\ref{ell}). From (\ref{k21}) and (\ref{if2}) we obtain
\begin{equation}\label{r}r_{\lambda}(u)=\frac{1}{2}\frac{d}{du}\left(\ln \frac{a-b\mathrm{sn}^2(\sqrt{\ell_3}u,m)}{\ell_3-\ell_4 \mathrm{sn}^2(\sqrt{\ell_3}u,m)}\right)+c+\frac{d}{1-\alpha^2\mathrm{sn}^2(\sqrt{\ell_3}u,m)},
\end{equation}
where the parameter $m$ is as in (\ref{ell}) and $\alpha$ is as in (\ref{palpha}).

\subsubsection{The integrating factor of the first kind of a non-real eigenvalue}
Let $\lambda$ be a non-real eigenvalue, then
$$1-\alpha^2\mathrm{sn}^2(\sqrt{\ell_3}u,m)\neq 0,\quad \forall u\in \R$$
and
$$\int \frac{du}{1-\alpha^2\mathrm{sn}^2(u,m)}=\Pi(\alpha^2,\mathrm{am}_m(u),m),$$
where $\Pi(n,\phi,m)$ is the incomplete integral of the third kind and $\mathrm{am}_m(-)$ is the Jacobi's amplitude with parameter $m$. Note that in this case, the restriction of the incomplete integral of the third kind on the real axis is a regular real-analytic function. Since
$$\frac{a-b\mathrm{sn}^2(\sqrt{\ell_3}u,m)}{\ell_3-\ell_4 \mathrm{sn}^2(\sqrt{\ell_3}u,m)}\notin \R_-,\quad \forall u\in \R$$
we can evaluate the logarithm\footnote{We use the standard determination of the natural logarithm, with a branch cut discontinuity in the complex plane running from $-\infty$ to $0$.} of the function on the left hand side in the above formula. Thus, the integrating factor of the eigenvalue is given by
\begin{equation}\label{ifc}
\delta_{\lambda}(u)=\frac{1}{2}\ln\left(\frac{a-b\mathrm{sn}^2(\sqrt{\ell_3}u,m)}{\ell_3-\ell_4 \mathrm{sn}^2(\sqrt{\ell_3}u,m)}\right)+c u+\frac{d}{\sqrt{\ell_3}} \Pi(\alpha^2,\mathrm{am}_m(\sqrt{\ell_3} u),m).
\end{equation}

 \subsubsection{The integrating factor of the first kind of a real eigenvalue}\label{IFFR} If $\lambda$ is a real eigenvalue, the function $r_{\lambda}$ is singular and the evaluation of the integrating factor requires some caution.  Let $w$ and $v$ be the real constants
$$w=\frac{\alpha}{\sqrt{(\alpha^2-m)(\alpha^2-1)}},$$
and
$$v=\frac{E(m)}{K(m)}-E(p,m)-\mathrm{cs}(p,m)\mathrm{dn}(p,m)-
\frac{\sqrt{(\alpha^2-m)(\alpha^2-1)}}{\alpha},$$
where $p=p_{\lambda}$ is as in (\ref{palpha}) and $E(m), E(-,m)$ are the complete and incomplete elliptic integrals of the second kind respectively. Let $f_{\lambda}$ be the periodic extension with period $2\omega$ of the locally constant function
$$
f_{\lambda}(u)=\begin{cases}-\frac{\pi}{2},\quad u\in [p-\omega,p),\\
-\frac{3\pi}{2},\quad u\in [p,\omega-p),\quad \quad \lambda <0 \\
\quad \frac{\pi}{2},\quad u\in [\omega-p,\omega+p),\end{cases}
$$
and
$$
f_{\lambda}(u)=\begin{cases}-\frac{\pi}{2},\quad u\in [p-\omega,p),\\
\quad \frac{\pi}{2},\quad u\in [p,\omega+p),\end{cases}
$$
if $\lambda>0$. Denote by $\vartheta_1(-,q_m)$ the first Jacobi's theta function with nome
$$q_m=\mathrm{exp}(-\pi K(1-m)/K(m)).$$
Proceeding as in \cite{La}, p.$71$, we see that
\begin{equation}\label{PTR1}
g_{\lambda,1}(u)=\frac{w}{2\sqrt{\ell_3}}\ln
\left(\frac{\vartheta_1(\frac{\pi}{2K(m)}(p-
\frac{u}{\sqrt{\ell_3}}),q_m)}{\vartheta_1(\frac{\pi}{2K(m)}(p+\frac{u}{\sqrt{\ell_3}}),q_m)} \right)+wvu
\end{equation}
is a real-valued primitive of $(1-\alpha^2\mathrm{sn}^2(\sqrt{\ell_3}u,m))^{-1}$. We take
\begin{equation}\label{PTR2}
\begin{split}
g_{\lambda,2}(u) & =\frac{1}{2}\ln(a)+cu-\frac{1}{2}\ln(\ell_3-
\ell_4\mathrm{sn}(\sqrt{\ell_3}u,m)+\\
& + \frac{1}{2}\ln(1-\alpha^2\mathrm{sn}^2(\sqrt{\ell_3}u,m))
\end{split}
\end{equation}
as a real-valued primitive of
$$\frac{1}{2}\frac{d}{du}\left(\ln \frac{a-b\mathrm{sn}^2(\sqrt{\ell_3}u,m)}{\ell_3-\ell_4 \mathrm{sn}^2(\sqrt{\ell_3}u,m)}\right)+c.$$
Then,
\begin{equation}\label{IIFR}\delta_{\lambda}= dg_{\lambda,1}+g_{\lambda,2}+if_{\lambda},\end{equation}
is the integrating factor for the real eigenvalue $\lambda$.

\subsection{Integrating factors of the second kind} Let $\lambda$ be a multiple root of $P_{\mathfrak{m}}$.
Note that $\lambda$ is necessarily real. From
$$s_{\lambda}=\frac{\lambda^2+k_2^2-2\lambda k_2\dot{k}_2}{(\lambda^2-k_2^2)^2}
=-2\lambda \frac{k_2\dot{k}_2}{(\lambda^2-k_2^2)^2}+\frac{\lambda^2+k_2^2}{k_2^2-\lambda^2}$$
we can write
$$\int s_{\lambda}du = \eta_{1,\lambda}+\eta_{2,\lambda}$$
where
\begin{equation}\label{eta1}
\begin{split}\eta_{1,\lambda}&=-2 \lambda\int \frac{k_2\dot{k}_2}{(\lambda^2-k_2^2)^2}du=
\frac{\lambda}{k_2^2-\lambda^2}=\\
&= \lambda \frac{\ell_3-\ell_4\mathrm{sn}^2(\sqrt{\ell_3}u,m)}{(\ell_1-\lambda^2\ell_3)-
(\ell_2-\lambda^2\ell_4)\mathrm{sn}^2(\sqrt{\ell_3}u,m)}=\\
&=\frac{\lambda}{\ell_1-\lambda^2\ell_3}\frac{\ell_3-\ell_4\mathrm{sn}^2(\sqrt{\ell_3}u,m)}
{1-\alpha^2\mathrm{sn}^2(\sqrt{\ell_3}u,m)}
\end{split}\end{equation}
and
\begin{equation}\label{FTR}
\begin{split}\eta_{2,\lambda}& =\int\frac{\lambda^2+k_2^2}{(\lambda^2-k_2^2)^2}du = \textsf{A} \int \frac{\mathrm{cn}^2(\sqrt{\ell_3}u,m)}{1-\alpha^2 \mathrm{sn}^2(\sqrt{\ell_3}u,m)}du+\\
&+\textsf{B} \int \frac{\mathrm{cn}^2(\sqrt{\ell_3}u,m)\mathrm{sn}^2(\sqrt{\ell_3}u,m)}{1-\alpha^2 \mathrm{sn}^2(\sqrt{\ell_3}u,m)}du + \textsf{C} \int \frac{\mathrm{sn}^2(\sqrt{\ell_3}u,m)}{1-\alpha^2 \mathrm{sn}^2(\sqrt{\ell_3}u,m)}du,
\end{split}
\end{equation}
where
$$
\textsf{A}=\frac{\ell_3(\ell_1+\lambda^2\ell_3)}{(\ell_1-\lambda^2\ell_3)^2},\quad
\textsf{B}=-\frac{\ell_4(\ell_2+\lambda_2\ell_4)}{(\ell_1-\lambda^2\ell_3)^2},$$
and
$$
\textsf{C}=\frac{(\ell_3-\ell_4)((\ell_1-\ell_2)+\lambda^2(\ell_3-\ell_4))}{(\ell_1-\lambda^2\ell_3)^2}.
$$
The integrals in the right hand side of (\ref{FTR}) can be evaluated as in \cite{BF} p. $218$, and, as a result, we obtain
\begin{equation}\label{eta2}\begin{split}
\eta_{2,\lambda}&=\frac{\textsf{M}}{\sqrt{\ell_3}}E(\sqrt{\ell_3}u,m)+\textsf{N}u+
\textsf{P}(g_{\lambda,1}-i\frac{\pi}{2}\widehat{f}_{\lambda})+\\
&+\frac{\textsf{Q}}{\sqrt{\ell_3}}
\frac{\mathrm{sn}(\sqrt{\ell_3}u,m)\mathrm{cn}(\sqrt{\ell_3}u,m)\mathrm{dn}(\sqrt{\ell_3}u,m)}
{1-\alpha^2\mathrm{sn}^2(\sqrt{\ell_3}u,m)}
\end{split}
\end{equation}
where $g_{\lambda,1}$ is as in (\ref{PTR1}) and $\widehat{f}_{\lambda}$ is the periodic extension with period $\omega$ of the locally constant function
\[
f(u)=\begin{cases}
 0,\quad u\in [0,p),\\
1,\quad u\in [p,\omega)\end{cases}
\]
and $\textsf{M},\textsf{N},\textsf{P},\textsf{Q}$ are the constants
$$\begin{cases}
\textsf{M}=\frac{\alpha^2}{2\alpha^2(m-\alpha^2)}\left(\textsf{A}+\frac{1}{\alpha^2}\textsf{B}+
\frac{1}{\alpha^2-1}\textsf{C}\right),\\
\textsf{N}=\frac{1}{2\alpha^2}\left(\textsf{A}+\textsf{C}-\frac{1}{\alpha^2}\textsf{B}\right),\\
\textsf{P}=\frac{1}{2\alpha^2(m-\alpha^2)}\left((2m\alpha^2-\alpha^4-m)\textsf{A}+(\alpha^4-m)\textsf{C}+
(\alpha^4-2\alpha^2+m)\textsf{B}\right),\\
\textsf{Q}=-\frac{\alpha^2}{2(m-\alpha^2)}\left(\textsf{A}+\frac{1}{\alpha^2-1}\textsf{C}+
\frac{1}{\alpha^2}\textsf{B}\right).
\end{cases}$$
Summing up : the integrating factor of the second kind of a multiple root is given by
\begin{equation}\label{IFIIK}\eta_{\lambda}=\eta_{1,\lambda}+\eta_{2,\lambda}
\end{equation}
where $\eta_{1,\lambda}$ and $\eta_{2,\lambda}$ are the functions defined as in (\ref{eta1}) and (\ref{eta2}).
\bibliographystyle{amsalpha}

\end{document}